\newtheorem{theorem}{Theorem}[section]
\newtheorem{proposition}[theorem]{Proposition}
\newtheorem{lemma}[theorem]{Lemma}
\newtheorem{corollary}[theorem]{Corollary}
\theoremstyle{definition}
\newtheorem{remark}[theorem]{Remark}
\newcommand{\hlgy}[1]{\ensuremath{H_{*}(#1)}}
\newcommand{\cohlgy}[1]{\ensuremath{H^{*}(#1)}}
\newcounter{bean}
\newenvironment{letterlist}{\begin{list}{\rm ({\alph{bean}})}
      {\usecounter{bean}\setlength{\rightmargin}{\leftmargin}}}
      {\end{list}}
\newcommand{\namedright}[3]{\ensuremath{#1\stackrel{#2}
 {\longrightarrow}#3}}
\newcommand{\nameddright}[5]{\ensuremath{#1\stackrel{#2}
 {\longrightarrow}#3\stackrel{#4}{\longrightarrow}#5}}
\newcommand{\namedddright}[7]{\ensuremath{#1\stackrel{#2}
 {\longrightarrow}#3\stackrel{#4}{\longrightarrow}#5
  \stackrel{#6}{\longrightarrow}#7}}
\newcommand{\larrow}{\relbar\!\!\relbar\!\!\rightarrow}
\newcommand{\llarrow}{\relbar\!\!\relbar\!\!\larrow}
\newcommand{\lllarrow}{\relbar\!\!\relbar\!\!\llarrow}
\newcommand{\lnameddright}[5]{\ensuremath{#1\stackrel{#2}
 {\larrow}#3\stackrel{#4}{\larrow}#5}}
\newcommand{\lnamedddright}[7]{\ensuremath{#1\stackrel{#2}
 {\larrow}#3\stackrel{#4}{\larrow}#5
  \stackrel{#6}{\larrow}#7}}
\newcommand{\llnamedright}[3]{\ensuremath{#1\stackrel{#2}
 {\llarrow}#3}}
\newcommand{\llnameddright}[5]{\ensuremath{#1\stackrel{#2}
 {\llarrow}#3\stackrel{#4}{\llarrow}#5}}
\newcommand{\lllnamedright}[3]{\ensuremath{#1\stackrel{#2}
 {\lllarrow}#3}}
\newcommand{\lllnameddright}[5]{\ensuremath{#1\stackrel{#2}
 {\lllarrow}#3\stackrel{#4}{\lllarrow}#5}}
\newcommand{\qqed}{\hfill\Box}
\begin{document}


\title[Loop spaces of Poincar\'{e} Duality complexes]
   {Loop space decompositions of $(2n-2)$-connected $(4n-1)$-dimensional 
   Poincar\'{e} Duality complexes} 

\author{Ruizhi Huang} 
\address{Institute of Mathematics, Academy of Mathematics and Systems Science, 
   Chinese Academy of Sciences, Beijing 100190, China} 
\email{haungrz@amss.ac.cn} 
   \urladdr{https://sites.google.com/site/hrzsea/}
\author{Stephen Theriault}
\address{School of Mathematics, University of Southampton, Southampton 
   SO17 1BJ, United Kingdom}
\email{S.D.Theriault@soton.ac.uk}

\subjclass[2010]{Primary 55P35, 57N65; Secondary 55Q15}
\keywords{Poincar\'{e} duality space, loop space decomposition, Whitehead product}


\begin{abstract} 
Beben and Wu showed that if $M$ is a $(2n-2)$-connected $(4n-1)$-dimensional 
Poincar\'{e} Duality complex such that $n\geq 3$ and $H^{2n}(M;\mathbb{Z})$ consists 
only of odd torsion, then $\Omega M$ can be decomposed up to homotopy as a product 
of simpler, well studied spaces. We use a result from~\cite{BT2} to greatly simplify 
and enhance Beben and Wu's work and to extend it in various directions. 
\end{abstract}

\maketitle

\section{Introduction} 

An orientable Poincar\'{e} Duality complex is a connected $CW$-complex whose 
cohomology satisfies Poincar\'{e} Duality. An orientable manifold is an example. 
In~\cite{BW} Beben and Wu gave a homotopy decomposition of $\Omega M$ where 
$M$ is any $(2n-2)$-connected $(4n-1)$-dimensional orientable Poincar\'{e} Duality complex, 
provided $n\geq 3$ and $H^{2n}(M;\mathbb{Z})$ has no $2$-torsion. They used 
this to show that the homotopy type of $\Omega M$ depended only on homological 
properties of $M$. This is in contrast to the homotopy type of $M$, which is known 
to depend on other properties as well. In particular, their result implies that the homotopy 
groups of $M$ depend only on its homological properties. 

In this paper we revisit Beben and Wu's result. We give a simpler approach involving 
much less spectral sequence calculation, instead relying on a result proved in~\cite{BT2}. 
This allows for the results to be significantly extended and enhanced in various directions. 

It should also be noted that earlier work of Selick~\cite{Se} using different methods 
can be used to give a $p$-local homotopy decomposition of $\Omega M$ when $p$ is 
an odd prime and $H^{2n}(M;\mathbb{Z})\cong\mathbb{Z}/p^{r}\mathbb{Z}$. This has 
the advantage that it avoids calculating the mod-$p$ homology of $\Omega M$ entirely 
but it also cedes a level of precision that we will later require; this is explained more 
fully in Section~\ref{sec:rank1}. 
 
For any $(2n-2)$-connected $(4n-1)$-dimensional Poincar\'{e} Duality complex $M$ 
we have 
\[H^{2n}(M;\mathbb{Z})\cong\mathbb{Z}^{d}\oplus 
       \bigoplus_{k=1}^{\ell}\mathbb{Z}/p_{k}^{r_{k}}\mathbb{Z}\] 
where $d\geq 0$, each $p_{k}$ is prime and each $r_{k}\geq 1$. The case when 
$d\geq 1$ has been dealt with in~\cite[Examples 4.4 and 5.3]{BT2} (see~\cite{Bas} for a 
different approach to the homotopy type) so we restrict to the case when $d=0$. 
In this case the description of $H^{2n}(M;\mathbb{Z})$ implies that the $2n$-skeleton~$M_{2n}$ 
of $M$ is homotopy equivalent to a wedge of Moore spaces, 
$M_{2n}\simeq\bigvee_{k=1}^{\ell} P^{2n}(p_{k}^{r_{k}})$. If each~$p_{k}$ is 
an odd prime we show the following. Let $m$ be the least common multiple of 
$\{p_{1}^{r_{1}},\ldots,p_{\ell}^{r_{\ell}}\}$ and let $m=\bar{p}_{1}^{\bar{r}_{1}}\cdots\bar{p}_{s}^{\bar{r}_{s}}$ 
be its prime decomposition. Notice that $\{\bar{p}_{1},\ldots,\bar{p}_{s}\}$ is the set of distinct 
primes in $\{p_{1},\ldots,p_{\ell}\}$  and each~$\bar{r}_{j}$ is the maximum power of $\bar{p}_{j}$ 
appearing in the list $\{p_{1}^{r_{1}},\ldots,p_{\ell}^{r_{\ell}}\}$. By~\cite{N}, the wedge 
of Moore spaces $\bigvee_{j=1}^{s} P^{2n}(\bar{p}_{j}^{\bar{r}_{j}})$ is homotopy 
equivalent to $P^{2n}(m)$. Write $M_{2n}\simeq P^{2n}(m)\vee\Sigma A$ where 
$\Sigma A$ is the wedge of the remaining Moore spaces in $M_{2n}$. Let $f$ be the 
composite of inclusions 
\(f\colon\nameddright{\Sigma A}{}{M_{2n}}{}{M}\) 
and define the space~$V$ and the map $\frak{h}$ by the homotopy cofibration 
\(\nameddright{\Sigma A}{f}{M}{\frak{h}}{V}\). 
We show that $V$ is a Poincar\'{e} Duality complex with 
$H^{2n}(V;\mathbb{Z})\cong\mathbb{Z}/m\mathbb{Z}$, $\Omega\frak{h}$ has 
a right homotopy inverse 
\(s\colon\namedright{\Omega V}{}{\Omega M}\) 
and prove the following.  

In general, for a space $X$ let 
\(ev\colon\namedright{\Sigma\Omega X}{}{X}\) 
be the canonical evaluation map. Given two maps 
\(a\colon\namedright{\Sigma X}{}{Z}\) 
and 
\(b\colon\namedright{\Sigma Y}{}{Z}\), 
let 
\([a,b]\colon\namedright{\Sigma X\wedge Y}{}{Z}\) 
be the Whitehead product of $a$ and $b$. Let $S^{2n+1}\{p^{r}\}$ be 
the homotopy fibre of the degree~$p^{r}$ map on $S^{2n+1}$. 

\begin{theorem} 
   \label{introloopMdecomp} 
   Let $M$ be a $(2n-2)$-connected, $(4n-1)$-dimensional Poincar\'{e} Duality 
   complex such that $n\geq 2$. Suppose that 
   \[H^{2n}(M;\mathbb{Z})\cong\bigoplus_{k=1}^{\ell}\mathbb{Z}/p_{k}^{r_{k}}\mathbb{Z}\]  
   where each $p_{k}$ is an odd prime. Then with $V$ and $A$ chosen as above: 
   \begin{letterlist} 
      \item there is a homotopy fibration 
               \[\llnameddright{(\Sigma\Omega V\wedge A)\vee\Sigma A}{[\gamma,f]+f}{M}{\frak{h}}{V}\] 
               where $\gamma$ is the composite 
               \(\gamma\colon\nameddright{\Sigma\Omega V}{\Sigma s}{\Sigma\Omega M}{ev}{M}\); 
      \item the homotopy fibration in~(a) splits after looping to give a homotopy equivalence 
               \[\Omega M\simeq\Omega V\times\Omega((\Sigma\Omega V\wedge A)\vee\Sigma A);\] 
      \item there is a homotopy equivalence 
               \[\Omega V\simeq\prod_{j=1}^{s} S^{2n-1}\{\bar{p}_{j}^{\bar{r}_{j}}\}\times\Omega S^{4n-1}.\] 
   \end{letterlist} 
\end{theorem}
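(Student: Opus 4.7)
The plan is to obtain parts (a) and (b) as almost immediate consequences of the decomposition principle from~\cite{BT2}, and to handle part (c) by a prime-by-prime reduction to the rank-one case treated in Section~\ref{sec:rank1}. The main obstacle will be the rank-one input, not the assembly.

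For (a) and (b), I would appeal directly to the~\cite{BT2} principle: the data set up before the theorem statement --- a homotopy cofibration $\Sigma A\xrightarrow{f}M\xrightarrow{\mathfrak{h}}V$ with $V$ a Poincar\'{e} Duality complex and $\Omega\mathfrak{h}$ admitting a right homotopy inverse $s$ --- is exactly the required input. The output is a homotopy fibration $E\to M\xrightarrow{\mathfrak{h}}V$ with $E=(\Sigma\Omega V\wedge A)\vee\Sigma A$ and connecting map $[\gamma,f]+f$, where $\gamma=ev\circ\Sigma s$; this delivers~(a). The section $s$ then forces the principal loop fibration to be trivial, so it splits as $\Omega M\simeq\Omega V\times\Omega E$, which is~(b); this splitting is also part of the~\cite{BT2} package. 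What remains is to verify that the connectivity of $M$, the decomposition $M_{2n}\simeq P^{2n}(m)\vee\Sigma A$, and the existence of $s$ together place us within the hypotheses of~\cite{BT2}.

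For (c), I would analyse $V$ prime by prime. For a prime $q\notin\{\bar{p}_1,\ldots,\bar{p}_s\}$ the localisation $V_{(q)}$ is homotopy equivalent to $S^{4n-1}_{(q)}$, because its $2n$-skeleton is $q$-locally contractible and so the top-cell attaching map reduces to a self-map of a sphere. At $q=\bar{p}_j$, $V_{(\bar{p}_j)}$ is a $(2n-2)$-connected $(4n-1)$-dimensional Poincar\'{e} Duality complex with $H^{2n}(V_{(\bar{p}_j)};\mathbb{Z}_{(\bar{p}_j)})\cong\mathbb{Z}/\bar{p}_j^{\bar{r}_j}\mathbb{Z}$, and so the rank-one decomposition from Section~\ref{sec:rank1} yields $\Omega V_{(\bar{p}_j)}\simeq S^{2n-1}\{\bar{p}_j^{\bar{r}_j}\}\times\Omega S^{4n-1}$. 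Since each factor $S^{2n-1}\{\bar{p}_j^{\bar{r}_j}\}$ is intrinsically $\bar{p}_j$-local (being the fibre of a $\bar{p}_j^{\bar{r}_j}$-power self-map) and contractible at other primes, and both sides agree rationally with $\Omega S^{4n-1}_{\mathbb{Q}}$, the local equivalences assemble via Sullivan's arithmetic square into the stated integral equivalence. The hard step is producing the rank-one decomposition with enough precision to support the later applications in the paper, which is why it receives its own section rather than being inherited from Selick's $p$-local argument.
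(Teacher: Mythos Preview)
Your plan for (a) and (b) is exactly the paper's: apply Theorem~\ref{GTcofib} to the cofibration $\Sigma A\to M\xrightarrow{\mathfrak{h}}V$ once the right homotopy inverse $s$ for $\Omega\mathfrak{h}$ is available. For~(c) your route genuinely diverges. You assemble local decompositions via Sullivan's arithmetic square, whereas the paper (Proposition~\ref{Vmdecomp}) first builds a single \emph{integral} map
\[e\colon\bigg(\prod_{j} S^{2n-1}\{\bar{p}_{j}^{\bar{r}_{j}}\}\bigg)\times\Omega S^{4n-1}\longrightarrow\Omega V\]
and then verifies it is an equivalence one prime at a time. Your method works---the rational compatibility needed for the fracture square is mild since both sides rationalize to $\Omega S^{4n-1}$---but it yields only an abstract equivalence. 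The paper's construction buys more: the map $e$ is designed so that each factor already lifts through $\Omega M$ (the $S^{2n-1}\{\bar p_j^{\bar r_j}\}$ factors via $\Omega P^{2n}(m)\hookrightarrow\Omega M$, and the $\Omega S^{4n-1}$ factor via $\Omega H$ for a map $H\colon S^{4n-1}\to M$), and this lifting is precisely what furnishes the section~$s$ (Lemma~\ref{preMhinv}, Corollary~\ref{Mhinv}). So in the paper's logic, (c) precedes and \emph{produces} the $s$ you are taking as given; your arithmetic-square version of~(c) would leave the existence of $s$ to be established by a separate argument.
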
 

As a notable special case, if the primes $p_{k}$ for $1\leq k\leq\ell$ all equal a common prime $p$, and 
$r$ is the maximum of $\{r_{1},\ldots,r_{k}\}$ then 
$\Omega V\simeq S^{2n-1}\{p^{r}\}\times\Omega S^{4n-1}$. 

In~\cite{BW} the decompositions in parts~(b) and~(c) of Theorem~\ref{introloopMdecomp} 
were proved for $n\geq 3$. Part~(a) is new as is the $n=2$ case for $2$-connected $7$-dimensional 
Poincar\'{e} Duality complexes. Further, while~\cite{BW} gives no information in $2$-torsion cases, 
in Theorem~\ref{genloopMdecomp} we prove analogues of parts~(a) and~(b) when 
$H^{2n}(M;\mathbb{Z})\cong\bigoplus_{k=1}^{\ell}\mathbb{Z}/p_{k}^{r_{k}}\mathbb{Z}\oplus\bigoplus_{s=1}^{t}\mathbb{Z}/2^{r_s}\mathbb{Z}$,  
where each $p_{k}$ is an odd prime, each $r_s\geq 2$, and $\ell\geq 1$, and in 
Proposition~\ref{p=2decomp} we consider special cases when $2$-primary analogues of 
part~(c) of Theorem~\ref{introloopMdecomp} hold.   

An interesting consequence is a rigidity result. In Remark~\ref{Wremark} 
we show that $(\Sigma\Omega V\wedge A)\vee\Sigma A$ is homotopy equivalent to a 
wedge $W$ of Moore spaces, so part~(b) may be written more succinctly as 
$\Omega M\simeq\Omega V\times\Omega W$. As observed in~\cite{BW}, 
the homotopy types of $\Omega V$ and $\Omega M$ depend only on 
information from $H^{2n}(M;\mathbb{Z})$. Thus, if $M$ and $M'$ are both $(2n-2)$-connected 
$(4n-1)$ dimensional Poincar\'{e} Duality complexes satisfying the hypotheses of 
Theorem~\ref{introloopMdecomp}, and $H^{2n}(M;\mathbb{Z})\cong H^{2n}(M';\mathbb{Z})$, 
then $\Omega M\simeq\Omega M'$. 

We also prove an additional statement that was unaddressed in~\cite{BW}. Let 
\(I\colon\namedright{M_{2n}}{}{M}\) 
be the inclusion of the $2n$-skeleton. We show that there is a homotopy cofibration 
\(\nameddright{P^{4n-1}(m)}{\mathfrak{G}}{M_{2n}\vee S^{4n-1}}{I+H}{M}\) 
where $\Omega(I+H)$ has a right homotopy inverse
\(S\colon\namedright{\Omega M}{}{\Omega(M_{2n}\vee S^{4n-1})}\), 
and prove the following. 

\begin{theorem} 
   \label{introI+Hfib} 
   With the same hypotheses as in Theorem~\ref{introloopMdecomp}, there is a homotopy fibration 
   \[\llnameddright{(P^{4n-1}(m)\wedge\Omega M)\vee P^{4n-1}(m)}{[\frak{G},\Gamma]+\frak{G}} 
          {M_{2n}\vee S^{4n-1}}{I+H}{M}\] 
   where $\Gamma$ is the composite 
   \(\nameddright{\Sigma\Omega M}{\Sigma S}{\Sigma\Omega(M_{2n}\vee S^{4n-1})}{ev}{M_{2n}\vee S^{4n-1}}\), 
   and this homotopy fibration splits after looping to give a homotopy equivalence 
   \[\Omega(M_{2n}\vee S^{4n-1})\simeq\Omega M\times 
          \Omega((P^{4n-1}(m)\wedge\Omega M)\vee P^{4n-1}(m)).\] 
\end{theorem}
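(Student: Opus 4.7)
The plan is to recognize that Theorem~\ref{introI+Hfib} is a direct application of the same general result from~\cite{BT2} that is used to prove Theorem~\ref{introloopMdecomp}, but applied to a different cofibration. The key structural input, namely the homotopy cofibration
\(\nameddright{P^{4n-1}(m)}{\mathfrak{G}}{M_{2n}\vee S^{4n-1}}{I+H}{M}\)
together with the right homotopy inverse $S\colon\namedright{\Omega M}{}{\Omega(M_{2n}\vee S^{4n-1})}$ of $\Omega(I+H)$, is already established earlier in the paper in the paragraph preceding the statement, so this is taken as given.

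The [BT2] tool may be stated as follows: given a homotopy cofibration \(\nameddright{C}{g}{Y}{p}{X}\) with the property that $\Omega p$ admits a right homotopy inverse \(s\colon\namedright{\Omega X}{}{\Omega Y}\), there is a homotopy fibration
\[\llnameddright{(C\wedge\Omega X)\vee C}{[g,\gamma]+g}{Y}{p}{X}\]
where \(\gamma\colon\nameddright{\Sigma\Omega X}{\Sigma s}{\Sigma\Omega Y}{ev}{Y}\). First I would apply this tool with \(C=P^{4n-1}(m)\), \(Y=M_{2n}\vee S^{4n-1}\), \(X=M\), \(g=\mathfrak{G}\), \(p=I+H\), and right homotopy inverse \(s=S\). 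The hypotheses are satisfied by construction, and the resulting fibration reads
\[\llnameddright{(P^{4n-1}(m)\wedge\Omega M)\vee P^{4n-1}(m)}{[\mathfrak{G},\Gamma]+\mathfrak{G}}{M_{2n}\vee S^{4n-1}}{I+H}{M},\]
where $\Gamma=ev\circ\Sigma S$, matching the statement in part (a).

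For the splitting asserted in part (b), I would loop the fibration produced in (a). Since $\Omega(I+H)$ has a right homotopy inverse $S$, the looped fibration admits a section, and therefore splits as a product; this yields
\[\Omega(M_{2n}\vee S^{4n-1})\simeq\Omega M\times\Omega\bigl((P^{4n-1}(m)\wedge\Omega M)\vee P^{4n-1}(m)\bigr),\]
as claimed. This step is entirely formal given the existence of $S$.

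The main obstacle is not in the present argument but in the input: one must separately verify that the cofibration \(\nameddright{P^{4n-1}(m)}{\mathfrak{G}}{M_{2n}\vee S^{4n-1}}{I+H}{M}\) has the stated form and that the loop of $I+H$ admits the right homotopy inverse $S$. Once these are in hand, Theorem~\ref{introI+Hfib} follows by direct substitution into the [BT2] result, exactly mirroring the derivation of Theorem~\ref{introloopMdecomp}(a)--(b) from the cofibration \(\nameddright{\Sigma A}{f}{M}{\mathfrak{h}}{V}\). The novelty here lies only in applying the same mechanism to a coarser cofibre sequence that captures the attaching map of the top cell relative to the entire $2n$-skeleton rather than just its ``torsion'' summand $\Sigma A$.
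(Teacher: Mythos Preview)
Your proposal is correct and matches the paper's own proof essentially verbatim: the paper simply observes that the homotopy cofibration \(\nameddright{P^{4n-1}(m)}{\mathfrak{G}}{M_{2n}\vee S^{4n-1}}{I+H}{M}\) (with $\mathfrak{G}=G-q$) together with the right homotopy inverse for $\Omega(I+H)$ established in Lemma~\ref{I+Hinv} place us squarely in the hypotheses of Theorem~\ref{GTcofib}, which immediately yields both the fibration and the looped splitting. One minor remark: you do not need to argue the splitting separately, since Theorem~\ref{GTcofib} already packages both conclusions together; your standalone justification (``the looped fibration admits a section, and therefore splits'') is correct here because the looped sequence is principal, but invoking the full statement of Theorem~\ref{GTcofib} is cleaner.
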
 

Theorem~\ref{introI+Hfib} is interesting. 
Since $M_{2n}$ is homotopy equivalent to a wedge of simply-connected Moore spaces, 
it is a suspension. The theorem therefore shows that $\Omega M$ retracts off a 
loop suspension, it identifies the complementary factor, and it explicitly describes how the 
complementary factor maps into the loop suspension. 

The authors would like to thank the referee for several comments that have improved the paper.

\section{Preliminary results} 
\label{sec:prelim} 

This section contains preliminary results that will be referred to frequently 
in the subsequent sections. We start with a general result from~\cite[Proposition 3.5]{BT2}. 

\begin{theorem} 
   \label{GTcofib} 
   Let 
   \(\nameddright{\Sigma A}{f}{Y}{h}{Z}\) 
   be a homotopy cofibration. Suppose that $\Omega h$ has a right 
   homotopy inverse 
   \(s\colon\namedright{\Omega Z}{}{\Omega Y}\). 
   Let $\gamma$ be the composite 
   \(\gamma\colon\nameddright{\Sigma\Omega Z}{\Sigma s}{\Sigma\Omega Y}{ev}{Y}\). 
   Then there there is a homotopy fibration 
   \[\llnameddright{(\Sigma\Omega Z\wedge A)\vee\Sigma A}{[\gamma,f]+f} 
           {Y}{h}{Z}\] 
   which splits after looping to give a homotopy equivalence 
   \[\hspace{5cm}\Omega Y\simeq\Omega Z\times\Omega((\Sigma\Omega Z\wedge A)\vee\Sigma A). 
         \hspace{5cm}\Box\] 
\end{theorem}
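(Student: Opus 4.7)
The plan is to build a candidate map onto the homotopy fibre of $h$, prove it is a homotopy equivalence, and then extract the loop space splitting from the retraction $s$. Let $F$ denote the homotopy fibre of $h$. First I would check that the composite $h\circ([\gamma,f]+f)$ is null homotopic: on the wedge summand $\Sigma A$ this is $h\circ f$, which vanishes because $f$ and $h$ form a cofibration; on the summand $\Sigma\Omega Z\wedge A$, naturality of the Whitehead product gives $h\circ[\gamma,f]=[h\circ\gamma,h\circ f]$, which is null since $h\circ f$ is. Hence $[\gamma,f]+f$ lifts to a map
\[\tilde{g}\colon\namedright{(\Sigma\Omega Z\wedge A)\vee\Sigma A}{}{F}.\]
Observe also that $h\circ\gamma\simeq ev$, since $\gamma=ev\circ\Sigma s$ and $h\circ ev=ev\circ\Sigma\Omega h$ by naturality of evaluation; thus $\gamma$ is a distinguished lift of the evaluation $\Sigma\Omega Z\to Z$ through $Y$, a fact that is essential below.

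Next I would use $s$ to split the looped fibration. Since $\Omega h$ has a right homotopy inverse, a standard argument produces an $H$-map $\Omega Z\times\Omega F\to\Omega Y$, defined by loop multiplication after pushing the two factors into $\Omega Y$ along $s$ and the fibre inclusion, and a comparison on homotopy groups (using that $s$ splits the long exact sequence) shows it is a weak equivalence. The theorem therefore reduces to showing that $\tilde g$ is a homotopy equivalence, for then $\Omega\tilde g$ identifies $\Omega F$ with $\Omega((\Sigma\Omega Z\wedge A)\vee\Sigma A)$ and the desired splitting of $\Omega Y$ follows.

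Third, and this is the crux, I would prove that $\tilde g$ is a homotopy equivalence. My approach is a Mather-cube or Ganea-style comparison for the cofibration $\Sigma A\to Y\to Z$: the lift $\gamma$ trivialises the restricted fibration over $\Sigma\Omega Z\subset Z$, the original attaching map $f$ accounts for the image of the $\Sigma A$ cells, and the non-triviality of gluing $\Sigma A$ into $Y$ along the $\Omega Z$-action on $F$ is captured exactly by the Whitehead product $[\gamma,f]$. In this picture $\tilde g$ realises the natural attaching data that assembles $F$ from these two pieces.

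The main obstacle is this third step: it amounts to a Ganea-type identification of the fibre of a cofibration under the extra hypothesis that $\Omega h$ splits. The cleanest completion is likely a homological check — use the second step to compute $H_{*}(\Omega F)$ as $H_{*}(\Omega Y)/\!/H_{*}(\Omega Z)$, then verify via the Serre spectral sequence for $\namedright{F}{}{Y}$ over $Z$ that $\tilde g_{*}$ is an isomorphism on homology, and conclude by Whitehead's theorem. Once $\tilde g$ is a homotopy equivalence, the product decomposition
\[\Omega Y\simeq\Omega Z\times\Omega\bigl((\Sigma\Omega Z\wedge A)\vee\Sigma A\bigr)\]
is immediate from the second step.
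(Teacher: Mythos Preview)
The paper does not prove this theorem: it is quoted verbatim from \cite[Proposition~3.5]{BT2} and marked with a $\Box$ to indicate that no argument is supplied here. So there is no ``paper's own proof'' to compare against; the result is treated as a black box imported from Beben--Theriault.

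That said, your sketch is a reasonable outline of how such a statement is typically proved, and Steps~1 and~2 are correct and standard. The genuine content, as you correctly identify, is Step~3: showing that the lift $\tilde g$ is a homotopy equivalence. Your proposed homological verification is plausible in spirit but, as written, is not yet a proof. Computing $H_{*}(\Omega F)$ as $H_{*}(\Omega Y)/\!/H_{*}(\Omega Z)$ presupposes that the relevant Eilenberg--Moore or Serre spectral sequence behaves well (e.g.\ collapses, or at least that the coaction is free), and then you still need to match this against $H_{*}(\Omega((\Sigma\Omega Z\wedge A)\vee\Sigma A))$ and check that $\Omega\tilde g$ realises the isomorphism --- none of which is automatic. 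Moreover, invoking Whitehead's theorem requires simple connectivity hypotheses that are not part of the stated theorem. The argument in \cite{BT2} is instead geometric: it uses Mather's cube theorem (or an equivalent pushout-to-pullback comparison) to identify $F$ directly with the half-smash $\Omega Z\ltimes\Sigma A\simeq(\Sigma\Omega Z\wedge A)\vee\Sigma A$, with the map into $Y$ recognised as $[\gamma,f]+f$ via the standard decomposition of the half-smash. If you want to complete your own proof, that geometric route is both cleaner and avoids the spectral-sequence bookkeeping.
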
 

\begin{remark} 
\label{GTcofibnat} 
As pointed out in~\cite[Remark 2.2]{T}, Theorem~\ref{GTcofib} has a naturality property. 
If there is a homotopy cofibration diagram 
\[\diagram 
     \Sigma A\rto^-{f}\dto & Y\rto^-{h}\dto & Z\dto \\ 
     \Sigma A'\rto^-{f'} & Y'\rto^-{h'} & Z' 
  \enddiagram\] 
and both $\Omega h$ and $\Omega h'$ have right homotopy inverses $s$ and $s'$ 
respectively such that there is a homotopy commutative diagram 
\[\diagram 
      \Omega Z\rto^-{s}\dto & \Omega Y\dto \\ 
      \Omega Z'\rto^-{s'} & \Omega Y' 
  \enddiagram\] 
then then the homotopy fibration in Theorem~\ref{GTcofib} is also natural. 
\end{remark}  

Next, we prove two general lemmas about the existence of certain right homotopy inverses. 

\begin{lemma} 
   \label{inverselemma} 
   Suppose that there is a homotopy equivalence 
   \[e\colon\nameddright{X\times Y}{f\times g}{\Omega Z\times\Omega Z}{\mu}{\Omega Z}\] 
   for some maps $f$ and $g$, where $\mu$ is the loop multiplication, and suppose that 
   there is a map 
   \(\namedright{\Omega W}{\Omega h}{\Omega Z}\). 
   If both $f$ and $g$ lift through $\Omega h$, then $\Omega h$ has a right homotopy inverse. 
\end{lemma}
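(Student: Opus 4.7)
The plan is to build the required right homotopy inverse directly using the hypothesized lifts and the H-space structure on $\Omega W$. Write $\tilde f\colon X\to\Omega W$ and $\tilde g\colon Y\to\Omega W$ for the assumed lifts of $f$ and $g$, so $\Omega h\circ\tilde f\simeq f$ and $\Omega h\circ\tilde g\simeq g$. Let $\mu_W\colon\Omega W\times\Omega W\to\Omega W$ denote loop multiplication on $\Omega W$, and define
\[ s\colon\namedddright{\Omega Z}{e^{-1}}{X\times Y}{\tilde f\times\tilde g}{\Omega W\times\Omega W}{\mu_W}{\Omega W}, \]
where $e^{-1}$ is any homotopy inverse of $e$.

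The verification that $\Omega h\circ s\simeq\mathrm{id}_{\Omega Z}$ hinges on the fact that $\Omega h$ is a loop map and hence an H-map, so $\Omega h\circ\mu_W\simeq\mu\circ(\Omega h\times\Omega h)$, where $\mu$ is the multiplication on $\Omega Z$. Combining this with the lifts of $f$ and $g$ gives
\[ \Omega h\circ\mu_W\circ(\tilde f\times\tilde g)\simeq\mu\circ(\Omega h\times\Omega h)\circ(\tilde f\times\tilde g)\simeq\mu\circ(f\times g)\simeq e, \]
and composing on the right with $e^{-1}$ produces a homotopy $\Omega h\circ s\simeq e\circ e^{-1}\simeq\mathrm{id}_{\Omega Z}$, as required.

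There is no real obstacle here: the statement is essentially the assertion that a right homotopy inverse to a loop map can be assembled from compatible partial lifts by invoking the H-map property, and the hypothesis that $\mu\circ(f\times g)$ is actually a homotopy equivalence is exactly what allows one to invert and obtain a section on the nose (up to homotopy). The only mild care needed is to record that the H-map identity for $\Omega h$ is what forces the two computations of $\Omega h\circ\mu_W\circ(\tilde f\times\tilde g)$ to agree; everything else is formal.
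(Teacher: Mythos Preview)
Your proof is correct and follows essentially the same approach as the paper: both use the lifts $\tilde f,\tilde g$, multiply them via the loop multiplication on $\Omega W$, and invoke that $\Omega h$ is an $H$-map to conclude that this composite covers the equivalence $e$. The only cosmetic difference is that you explicitly precompose with $e^{-1}$ to write down the section, whereas the paper phrases it as ``the upper row is a lift of $e$ through $\Omega h$''.
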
 

\begin{proof} 
Let 
\(s\colon\namedright{X}{}{\Omega W}\) 
and 
\(t\colon\namedright{Y}{}{\Omega W}\) 
be lifts of $f$ and $g$ respectively through $\Omega h$. Consider the diagram 
\[\diagram 
       X\times Y\rto^-{s\times t}\drto_{f\times g} 
             & \Omega W\times\Omega W\rto^-{\mu}\dto^{\Omega h\times\Omega h} 
             & \Omega W\dto^{\Omega h} \\ 
        & \Omega Z\times\Omega Z\rto^-{\mu} & \Omega Z. 
   \enddiagram\] 
The left triangle homotopy commutes by definition of $s$ and $t$ and the right square 
homotopy commutes since $\Omega h$ is an $H$-map. The lower direction around the 
diagram is the defintion of the homotopy equivalence $e$, so the upper row is a lift 
of $e$ through $\Omega W$. Therefore $\Omega h$ has a right homotopy inverse. 
\end{proof}

\begin{lemma} 
   \label{splittinglemma} 
   Suppose that there is a homotopy fibration diagram 
   \[\diagram 
         F\rto^-{f}\dto^{q} & F'\dto \\ 
         E\rto^-{e}\dto^{p} & E'\dto \\ 
         B\rto^-{b} & B' 
      \enddiagram\] 
   of path-connected $CW$-complexes, where $\Omega b$, $\Omega f$ and $\Omega p$ 
   have right homotopy inverses. Then $\Omega e$ has a right homotopy inverse. 
\end{lemma}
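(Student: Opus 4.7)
The plan is to apply Lemma~\ref{inverselemma} with $Z=E'$ and $h=e$. Denote the right column fibration by $F'\stackrel{q'}{\longrightarrow}E'\stackrel{p'}{\longrightarrow}B'$, and let $\sigma_{f},\sigma_{p},\sigma_{b}$ be the given right homotopy inverses of $\Omega f$, $\Omega p$ and $\Omega b$ respectively. To supply the hypotheses of Lemma~\ref{inverselemma}, I first construct a homotopy equivalence
\[
e'\;:=\;\mu\circ\bigl(\Omega q'\times(\Omega e\circ\sigma_{p}\circ\sigma_{b})\bigr)\colon\Omega F'\times\Omega B'\longrightarrow\Omega E',
\]
where $\mu$ is the loop multiplication on $\Omega E'$. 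To verify that $e'$ is a homotopy equivalence, I would compose with $\Omega p'$: using that $p'\circ q'$ is nullhomotopic, $p'\circ e\simeq b\circ p$, $\Omega p\circ\sigma_{p}\simeq\mathrm{id}$ and $\Omega b\circ\sigma_{b}\simeq\mathrm{id}$, one checks that $\Omega p'\circ e'$ is homotopic to the projection onto $\Omega B'$. Thus $e'$ is a map from the trivial product fibration $\Omega F'\to\Omega F'\times\Omega B'\to\Omega B'$ to the looped fibration $\Omega F'\to\Omega E'\to\Omega B'$, and over the basepoint of $\Omega B'$ its restriction is $\Omega q'$. Since $\Omega q'$ identifies $\Omega F'$ with the homotopy fibre of $\Omega p'$, the Five Lemma applied to the long exact sequences of homotopy groups shows that $e'$ is a weak, hence a homotopy, equivalence.

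It remains to lift both factors of $e'$ through $\Omega e$. The second factor lifts tautologically via $\sigma_{p}\circ\sigma_{b}\colon\Omega B'\to\Omega E$. For the first factor, I use the commutativity $e\circ q\simeq q'\circ f$ to compute
\[
\Omega e\circ(\Omega q\circ\sigma_{f})\;\simeq\;\Omega q'\circ\Omega f\circ\sigma_{f}\;\simeq\;\Omega q',
\]
so $\Omega q\circ\sigma_{f}\colon\Omega F'\to\Omega E$ is the required lift. Lemma~\ref{inverselemma} then produces the desired right homotopy inverse of $\Omega e$.

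The only delicate step is writing down the candidate $e'$ and verifying that it is a homotopy equivalence; once one has the correct formula, this reduces to a Five Lemma comparison of two fibrations over $\Omega B'$. The remainder of the argument is direct bookkeeping on the given commutative diagram together with the three supplied right homotopy inverses, so I anticipate no further obstacles.
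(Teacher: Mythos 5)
Your proof is correct and, up to reordering the factors, it is essentially the paper's argument: your $e'$ is (after applying $e\circ q\simeq q'\circ f$ and the $H$-map property of $\Omega e$) homotopic to the paper's composite $\Omega e\circ\theta$, and you verify it is an equivalence by the same Five Lemma comparison of fibrations over $\Omega B'$. The only cosmetic difference is that you route the final step through Lemma~\ref{inverselemma}, whereas the paper simply observes directly that $\theta\circ(\Omega e\circ\theta)^{-1}$ is the desired right homotopy inverse.
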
 

\begin{proof} 
Let 
\(r\colon\namedright{\Omega B'}{}{\Omega B}\), 
\(s\colon\namedright{\Omega F'}{}{\Omega F}\)  
and 
\(t\colon\namedright{\Omega B}{}{\Omega E}\) 
be right homotopy inverses for $\Omega b$, $\Omega f$ and $\Omega p$ respectively. 
Let $\theta$ be the composite 
\[\theta\colon\lnamedddright{\Omega B'\times\Omega F'}{r\times s}{\Omega B\times\Omega F} 
        {t\times\Omega q}{\Omega E\times\Omega E}{\mu}{\Omega E}.\] 
Consider the diagram 
\[\diagram 
      \Omega F'\rto^-{s}\dto^{i_{2}} & \Omega F\rto^-{\Omega f}\dto^{\Omega q} 
           & \Omega F'\dto \\ 
      \Omega B'\times\Omega F'\rto^-{\theta}\dto^{\pi_{1}} 
           & \Omega E\rto^-{\Omega e}\dto^{\Omega p} & \Omega E'\dto \\ 
      \Omega B'\rto^-{r} & \Omega B\rto^-{\Omega b} & \Omega B' 
   \enddiagram\] 
where $i_{2}$ is the inclusion of the second factor and $\pi_{1}$ is the projection onto 
the first factor. The lower left square homotopy commutes by definition of $\theta$ and 
$\Omega p$ being an $H$-map. The left column is a fibration, so the homotopy commutativity 
of the lower left square implies there is an induced map of fibres 
\(\namedright{\Omega F'}{}{\Omega F}\). 
Since $\theta\circ i_{2}=\Omega q\circ s$, a choice of map of fibres is $s$. Thus the 
left side of the diagram is a map of homotopy fibrations, as is the right by hypothesis. 
Therefore the composite from the left to the right column is a self-map of a homotopy 
fibration in which the top and bottom maps are homotopic to the identity. Therefore, 
by the Five Lemma, $\Omega e\circ\theta$ induces an isomorphism on homotopy groups 
and so is a homotopy equivalence by Whitehead's Theorem. 
\end{proof}

\section{The case when $H^{2n}(M;\mathbb{Z})\cong\mathbb{Z}/p^{r}\mathbb{Z}$} 
\label{sec:rank1} 

Let $V$ be a $(2n-2)$-connected $(4n-1)$-dimensional Poincar\'{e} Duality complex 
with $H^{2n}(V;\mathbb{Z})\cong\mathbb{Z}/p^{r}\mathbb{Z}$ where $p$ is a prime. 
As a $CW$-complex, $V=P^{2n}(p^{r})\cup e^{4n-1}$, and there is a homotopy cofibration 
\[\nameddright{S^{4n-2}}{f}{P^{2n}(p^{r})}{i}{V}\] 
where $f$ is the attaching map for the top cell and $i$ is the inclusion of the $(4n-2)$-skeleton.  
In this section we prove Theorems~\ref{introloopMdecomp} and~\ref{introI+Hfib} in the 
special case when $M=V$, assuming that $n\geq 2$ and~$p$ is odd. Some $2$-primary 
cases of Theorem~\ref{introloopMdecomp}~(c) will be deferred to Section~\ref{sec:p=2}. 

A decomposition of $\Omega V$ was proved by Beben and Wu~\cite{BW} for $n\geq 3$ 
and $p$ odd. Their method was more elaborate as it kept track of homology information in the 
general case of $\Omega M$ where $M$ is any $(2n-2)$-connected $(4n-1)$-dimensional 
Poincar\'{e} Duality complex with degree~$2n$ cohomology consisting only of odd torsion. 
We give a much simpler approach to the general case in Section~\ref{sec:rankm}, 
and so only need to keep track of homology information for the special case of $V$. 

Different methods were used by Selick~\cite{Se} to give a $p$-local decomposition 
of $\Omega V$ for $n\geq 2$ and $p$ odd. He used a generalization of methods developed 
by Dyer-Lashof and Ganea to produce a $p$-lcoal homotopy fibration 
\[\nameddright{S^{2n-1}\{p^{r}\}}{\partial}{S^{4n-1}}{h'}{V}\] 
where $\partial$ is null homotopic, giving a $p$-local homotopy equivalence 
$\Omega V\simeq S^{2n-1}\{p^{r}\}\times\Omega S^{4n-1}$, analogous to our 
Proposition~\ref{loopVdecomp}. The advantage of Selick's method is that it avoids 
homology calculations entirely. Ideally, we would like this to be an integral result 
rather than a $p$-local one; the method itself cannot be upgraded to do this as it depends 
on $S^{2n-1}$ being an $H$-space which rarely happens integrally, however a 
Sullivan square type argument could potentially be used to rectify this given that~$V$ 
localized at primes not equal to $p$ or rationally is homotopy equivalent to $S^{4n-1}$. 
The real disadvantage of Selick's method for our purposes is that it does not describe 
the homotopy class of the map $h'$ with enough precision for later use 
in Lemma~\ref{i+hinv} and Proposition~\ref{i+hfib}. It would be interesting to see if 
his techniques could be enhanced to do this, but in the meantime we fall back to 
homology calculations to deal with $\Omega V$. 

\begin{lemma} 
   \label{deg4n} 
   In degrees~$\leq 4n$ there is an algebra isomorphism 
   $\hlgy{\Omega V;\mathbb{Z}/p\mathbb{Z}}\cong\mathbb{Z}/p\mathbb{Z}[x,y]$ 
   where $\vert x\vert=2n-2$ and $\vert y\vert=2n-1$. 
\end{lemma}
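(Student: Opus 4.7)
The plan is to use the cohomology Serre spectral sequence of the path-loop fibration $\Omega V\to PV\to V$ with mod-$p$ coefficients, together with Poincar\'e duality on $V$, to identify $H^*(\Omega V;\mathbb{Z}/p\mathbb{Z})$ in degrees $\leq 4n$, and then to dualize to obtain the Pontryagin algebra.

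First I would compute $H^*(V;\mathbb{Z}/p\mathbb{Z})$: the hypothesis gives a single $\mathbb{Z}/p\mathbb{Z}$ class in each of degrees $0, 2n-1, 2n, 4n-1$, call them $1, \bar a, \bar b, \bar c$, and Poincar\'e duality forces $\bar a\bar b=\bar c$ (with $\bar a^{2}=\bar b^{2}=0$ by degree). Since $\Omega V$ is $(2n-3)$-connected and the spectral sequence must converge to $H^*(PV;\mathbb{Z}/p\mathbb{Z})=0$ in positive degree, $\bar a$ and $\bar b$ must be transgressions of unique one-dimensional fibre classes $\bar x\in H^{2n-2}(\Omega V;\mathbb{Z}/p\mathbb{Z})$ and $\bar y\in H^{2n-1}(\Omega V;\mathbb{Z}/p\mathbb{Z})$, with $d_{2n-1}(\bar x)=\bar a$ and $d_{2n}(\bar y)=\bar b$.

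The bulk of the computation is then to track Leibniz differentials on monomials in $\bar x, \bar y$ through total degree $4n$, using the relations $\bar a^{2}=\bar b^{2}=0$ and $\bar a\bar b=\bar c$; for instance $d_{2n-1}(\bar x^{2})=2\bar a\bar x\neq 0$ for $p$ odd. Demanding convergence to zero in each positive total degree $\leq 4n$ pins down the Poincar\'e series of $H^*(\Omega V;\mathbb{Z}/p\mathbb{Z})$, which matches that of $\mathbb{Z}/p\mathbb{Z}[x,y]$ with $|x|=2n-2, |y|=2n-1$. Dualizing as Hopf algebras converts the divided-power/exterior cohomology structure to the polynomial Pontryagin algebra on $x, y$ in this range, yielding the claimed algebra isomorphism.

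The hard part will be identifying, in cohomology, the additional generator in degree $4n-2$ dual to the Pontryagin class $y^{2}$ (since $\bar y^{2}=0$ by graded-commutativity of cup product when $p$ is odd): this extra class must transgress consistently with $\bar c=\bar a\bar b$, requiring Kudo-type transgression arguments. A cleaner alternative would be to use the Adams-Hilton model of $V$, namely the DG free associative algebra $(T(\alpha,\beta,\gamma), d)$ with $d\alpha=0$, $d\beta=p^{r}\alpha$, and $d\gamma=\alpha\beta-\beta\alpha$ (the last identity forced by Poincar\'e duality, identifying the attaching map of the top cell with a Whitehead product); taking mod-$p$ homology then kills $\gamma$, imposes $[\alpha][\beta]=[\beta][\alpha]$, and gives $\mathbb{Z}/p\mathbb{Z}[x,y]$ in degrees $\leq 4n$ directly.
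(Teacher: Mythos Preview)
Your first approach is essentially dual to the paper's, which runs the \emph{homology} Serre spectral sequence for the principal fibration $\Omega V\to\ast\to V$ rather than the cohomology one. Working in homology fills exactly the gaps you flag. First, because the fibration is principal, the differentials are $H_*(\Omega V)$--module maps, so from $d^{2n-1}(u)=x$ one gets $d^{2n-1}(x\otimes u)=x^{2}$, $d^{2n-1}(y\otimes u)=xy$, $d^{2n}(y\otimes v)=y^{2}$ with no factor of $2$; your cohomological Leibniz rule produces $d(\bar x^{2})=2\bar a\bar x$, which vanishes when $p=2$, and the lemma is needed at $p=2$ later in the paper. Second, the ``hard part'' you isolate is handled not by a Kudo argument but by the coalgebra structure: the diagonal $V\to V\times V$ induces a morphism of spectral sequences, so differentials commute with the reduced diagonal, and one computes $\overline{\Delta}(d^{2n-1}(uv))=x\otimes v+v\otimes x\neq 0$, forcing $d^{2n-1}(uv)$ to be a unit multiple of $x\otimes v$. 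This kills everything in degrees $\leq 4n$ by $E^{2n+1}$, and since $x^{2},xy,y^{2}$ have been produced as nonzero Pontryagin products one reads off the algebra structure directly, with no Hopf-algebra dualization step.

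Your Adams--Hilton alternative, as written, is circular. In the paper the identification of the differential on the top-cell generator, $dz=[x,y]$, is not an input but a consequence: it is the content of the lemmas immediately following this one and of the full calculation of $H_*(\Omega V)$, all of which depend on the present lemma. Poincar\'e duality by itself gives only that $a\cup b\neq 0$ in $H^{4n-1}(V;\mathbb{Z}/p\mathbb{Z})$; turning this into the statement that the mod-$p$ Hurewicz image of $\tilde f$ equals a unit times $[x,y]$ requires an independent argument (e.g.\ that the image is primitive in $T(x,y)$, that the primitives in degree $4n-3$ are spanned by $[x,y]$, and a Hopf-invariant link between the cup product and the coefficient). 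You have not supplied that argument, so the Adams--Hilton shortcut cannot stand in for the spectral-sequence computation here.
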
 

\begin{proof} 
Throughout, take cohomology and homology with mod-$p$ coefficients. 
By Poincar\'{e} Duality, there is an algebra isomorphism 
$\cohlgy{V}\cong\Lambda(a,b)$ where $\vert a\vert=2n-1$, $\vert b\vert=2n$. Dualizing, 
there is a coalgebra isomorphism $\hlgy{V}\cong\Lambda(u,v)$ where $u,v$ are the duals 
of $a,b$ respectively. In particular, if $\overline{\Delta}$ is the reduced diagonal, then $u$ 
and $v$ are primitive and $\overline{\Delta}(uv)=u\otimes v+v\otimes u$. 

Consider the mod-$p$ homology Serre spectral sequence for the principal homotopy fibration 
\(\nameddright{\Omega V}{}{\ast}{}{V}\). 
We have $E^{2}\cong\hlgy{\Omega V}\otimes\hlgy{V}$ and the spectral sequence 
converges to $\hlgy{\ast}$. For degree reasons, the first possible nontrivial differential 
is $d^{2n-1}$, and for convergence reasons we must have $d^{2n-1}(u)=x$ for some 
$x\in H_{2n-2}(\Omega V)$. Also, for convergence reasons we must have $d^{2n}(v)=y$ 
for some $y\in H_{2n-1}(\Omega V)$. Thus, in the $E^{2}$-term, we also have the elements 
$x\otimes u, x\otimes v, y\otimes u,y\otimes v$. Since the spectral sequence is principal, 
$d^{2n-1}$ and $d^{2n}$ are differentials, so $d^{2n-1}(x\otimes u)=x^{2}$, $d^{2n-1}(y\otimes u)=xy$ 
and $d^{2n}(y\otimes v)=y^{2}$. We claim that $d^{2n-1}(uv)=t\cdot (x\otimes v)$ 
for some unit $t\in\mathbb{Z}/p\mathbb{Z}$. The diagonal map gives a morphism of fibrations from 
\(\nameddright{\Omega V}{}{\ast}{}{V}\) 
to 
\(\nameddright{\Omega V\times\Omega V}{}{\ast}{}{V\times V}\) 
that induces a morphism of mod-$p$ homology Serre spectral sequences. Note that 
in the product fibration there is a K\"{u}nneth isomorphism that lets us regard the homology 
of the product as the tensor product of the homologies of the factors. Since the diagonal 
map induces the coalgebra structure in homology, this morphism of Serre spectral sequences 
implies that the differentials commute with the reduced diagonal. Therefore  
$\overline{\Delta}(d^{2n-1}(uv))=(d^{2n-1}\otimes d^{2n-1})(u\otimes v+v\otimes u)=x\otimes v+v\otimes x$ 
(noting that $d^{2n-1}(v)=0$). In particular, $\overline{\Delta}(d^{2n-1}(uv))\neq 0$, 
so $d^{2n-1}(uv)\neq 0$. For degree reasons, this implies that $d^{2n-1}(uv)=t\cdot (x\otimes v)$ 
for some unit $t\in\mathbb{Z}/p\mathbb{Z}$. Thus at the $E^{2n+1}$-page all elements of 
degree~$\leq 4n$ have vanished. Consequently, in degrees~$\leq 4n$ there is an algebra 
isomorphism $\hlgy{\Omega V}\cong\mathbb{Z}/p\mathbb{Z}[x,y]$. 
\end{proof} 

Next consider the effect of the map 
\(\namedright{\Omega P^{2n}(p^{r})}{\Omega i}{\Omega V}\) 
in mod-$p$ homology. Since $n\geq 2$, $P^{2n}(p^{r})$ is a suspension, so by 
the Bott-Samelson Theorem there is an algebra isomorphism 
\[\hlgy{\Omega P^{2n}(p^{r});\mathbb{Z}/p\mathbb{Z}}\cong T(x,y)\] 
where $T(\ \ )$ is the free tensor algebra functor, $\vert x\vert=2n-2$, $\vert y\vert=2n-1$ 
and $\beta^{r}y=x$. Since $i$ is the inclusion of the $(4n-2)$-skeleton, it induces a 
homotopy equivalence in dimensions~$\leq 4n-3$, so $\Omega i$ induces a homotopy 
equivalence in dimensions~$\leq 4n-4$. In particular, $(\Omega i)_{\ast}$ induces an 
isomorphism in degrees $2n-2$ and $2n-1$ in mod-$p$ homology. As $(\Omega i)_{\ast}$ 
is an algebra map, from Lemma~\ref{deg4n} we obtain the following. 

\begin{lemma} 
   \label{leastker} 
   In mod-$p$ homology, the generator of least degree in the kernel of $(\Omega i)_{\ast}$ 
   is $[x,y]$.~$\qqed$ 
\end{lemma}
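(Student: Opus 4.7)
The plan is to compare the two algebras in Pontryagin product, degree by degree, using the fact that $\Omega i$ is a loop map and hence $(\Omega i)_{\ast}$ is an algebra homomorphism. From the preceding discussion we already know that in the two lowest positive degrees, $2n-2$ and $2n-1$, the map $(\Omega i)_{\ast}$ is an isomorphism; in particular, on the generators of $T(x,y)$ it is given by $x\mapsto x$, $y\mapsto y$, so as an algebra map it factors as the composite of the quotient $T(x,y)\twoheadrightarrow \mathbb{Z}/p\mathbb{Z}[x,y]$ with the inclusion of the subalgebra of $H_*(\Omega V;\mathbb{Z}/p\mathbb{Z})$ generated by $x$ and $y$ (in degrees $\leq 4n$).

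Next I would examine the kernel degree by degree. In the free tensor algebra $T(x,y)$, monomials of length $\geq 2$ first appear in degree $4n-4$ with the single element $x^{2}$, and in all degrees strictly less than $4n-3$ both $T(x,y)$ and $\mathbb{Z}/p\mathbb{Z}[x,y]$ have the same dimensions (nonzero only in degrees $0$, $2n-2$, $2n-1$, $4n-4$, each one-dimensional). Since $(\Omega i)_{\ast}$ is an algebra map that restricts to an isomorphism on the generators and sends $x^{2}\mapsto x^{2}\neq 0$, it is injective in every degree $\leq 4n-4$, so the kernel is trivial there.

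In degree $4n-3$, however, $T(x,y)$ is two-dimensional, with basis $\{xy,\,yx\}$, whereas $\mathbb{Z}/p\mathbb{Z}[x,y]$ is one-dimensional in this degree because $xy=yx$ in a polynomial algebra. Hence $(\Omega i)_{\ast}(xy)=xy=(\Omega i)_{\ast}(yx)$, so the kernel in degree $4n-3$ is one-dimensional and is spanned by the commutator $xy-yx=[x,y]$. As this is the first degree in which the kernel is nonzero, $[x,y]$ is indeed the generator of least degree in $\ker(\Omega i)_{\ast}$.

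There is no real obstacle here: the argument is purely a dimension count, and the essential input is the contrast between the free tensor algebra structure on $H_{\ast}(\Omega P^{2n}(p^{r});\mathbb{Z}/p\mathbb{Z})$ supplied by the Bott-Samelson theorem and the commutative polynomial structure on $H_{\ast}(\Omega V;\mathbb{Z}/p\mathbb{Z})$ in low degrees established in Lemma~\ref{deg4n}. The only thing worth flagging is that $(\Omega i)_{\ast}$ genuinely is an algebra map (rather than just a module map), which is why we are entitled to reduce the analysis to what happens on the two generators.
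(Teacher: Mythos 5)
Your argument is correct and is essentially the same as the paper's, just with the details spelled out: the paper likewise deduces the lemma from the Bott--Samelson computation of $H_*(\Omega P^{2n}(p^r);\mathbb{Z}/p\mathbb{Z})\cong T(x,y)$, the low-degree isomorphism induced by the skeletal inclusion, the fact that $(\Omega i)_*$ is an algebra map, and the polynomial description of $H_*(\Omega V;\mathbb{Z}/p\mathbb{Z})$ in degrees $\leq 4n$ from Lemma~\ref{deg4n}. The one cosmetic difference is that the paper establishes injectivity up to degree $4n-4$ by observing that $\Omega i$ is a homotopy equivalence through that range, whereas you obtain it by a purely algebraic dimension count together with $x^2\neq 0$; both routes are valid and lead to the same identification of the kernel in degree $4n-3$ as the span of $[x,y]=xy-yx$.
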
 

Let 
\(\widetilde{f}\colon\namedright{S^{4n-3}}{}{\Omega P^{2n}(p^{r})}\) 
be the adjoint of $f$. Let $\iota_{m}\in H_{m}(S^{m};\mathbb{Z}/p\mathbb{Z})$ be 
a choice of a generator. 

\begin{lemma} 
   \label{tildefimage} 
   In mod-$p$ homology, there is a choice of $\iota_{4n-3}$ such that 
   $\widetilde{f}_{\ast}(\iota_{4n-3})=[x,y]$. 
\end{lemma}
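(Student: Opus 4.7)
The plan is to pin down $\widetilde{f}_{\ast}(\iota_{4n-3})$ as a nonzero multiple of $[x,y]$ in $H_{4n-3}(\Omega P^{2n}(p^{r});\mathbb{Z}/p\mathbb{Z})$ and then choose the generator $\iota_{4n-3}$ so that the multiplier becomes $1$. The first part is straightforward: since $f$ and $i$ are consecutive maps in a cofibration, $i\circ f$ is null homotopic, so its adjoint $\Omega i\circ\widetilde{f}$ is null, placing $\widetilde{f}_{\ast}(\iota_{4n-3})$ in $\ker(\Omega i)_{\ast}$. Lemma~\ref{leastker} identifies this kernel in degree $4n-3$ as the one-dimensional subspace $\mathbb{Z}/p\mathbb{Z}\langle[x,y]\rangle$, so $\widetilde{f}_{\ast}(\iota_{4n-3})=\alpha\cdot[x,y]$ for some $\alpha\in\mathbb{Z}/p\mathbb{Z}$.

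The key step is to show $\alpha\neq 0$, and for this I would appeal to the mod-$p$ Adams--Hilton (cobar) DGA model for $\Omega V$. This is the free tensor algebra $T(x,y,\bar{t})$ with $|x|=2n-2$, $|y|=2n-1$, $|\bar{t}|=4n-2$, and differentials $dx=dy=0$ (the Moore-space differential $p^{r}x$ vanishes mod $p$) together with $d\bar{t}=\widetilde{f}_{\ast}(\iota_{4n-3})=\alpha[x,y]$ recording the mod-$p$ Hurewicz image of the adjoint attaching map of the top cell of $V$. In total degree $4n-3$ the cycle space is $\mathbb{Z}/p\mathbb{Z}\{xy,yx\}$ and the boundary space is $\alpha\cdot\mathbb{Z}/p\mathbb{Z}\{xy-yx\}$, so $\dim_{\mathbb{Z}/p\mathbb{Z}}H_{4n-3}(\Omega V;\mathbb{Z}/p\mathbb{Z})$ equals $2$ when $\alpha=0$ and equals $1$ when $\alpha\neq 0$. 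But Lemma~\ref{deg4n} forces this dimension to be $1$ (since graded commutativity in $\mathbb{Z}/p\mathbb{Z}[x,y]$ gives $xy=yx$), so $\alpha\neq 0$. Rescaling $\iota_{4n-3}$ by $\alpha^{-1}\in\mathbb{Z}/p\mathbb{Z}$ then yields $\widetilde{f}_{\ast}(\iota_{4n-3})=[x,y]$.

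The main subtlety I expect is justifying the identity $d\bar{t}=\alpha[x,y]$ without higher-order corrections in the Adams--Hilton differential. In the present setting this is automatic: the sub-DGA $T(x,y)$ for the $(4n-2)$-skeleton has trivial mod-$p$ differential, so the first-order Hurewicz image $\alpha[x,y]$ is already a cycle in $T(x,y)$ and may be taken directly as $d\bar{t}$ in the Adams--Hilton construction for $V$.
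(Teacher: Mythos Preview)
Your argument is correct and takes a genuinely different route from the paper's. The paper proves this lemma geometrically: it introduces the homotopy fibre $F$ of $i\colon P^{2n}(p^{r})\to V$, runs the Serre spectral sequence for the principal fibration $\Omega V\to F\to P^{2n}(p^{r})$ to show that the $(4n-2)$-skeleton of $F$ is $S^{4n-2}$, and then uses Blakers--Massey to lift $f$ through $F$ as (a unit multiple of) the bottom-cell inclusion. Taking adjoints, $\widetilde{f}$ factors through the bottom cell of $\Omega F$, and Lemma~\ref{leastker} forces its Hurewicz image in $\Omega P^{2n}(p^{r})$ to be $[x,y]$.

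You instead front-load the Adams--Hilton model, which the paper only introduces afterward in Proposition~\ref{hlgyloopV}, and extract $\alpha\neq 0$ by matching the homology of the model against the dimension count from Lemma~\ref{deg4n}. This is more economical and effectively merges Lemma~\ref{tildefimage} with Proposition~\ref{hlgyloopV}: once $d\bar{t}=\alpha[x,y]$ is known and $H_{4n-3}(\Omega V;\mathbb{Z}/p\mathbb{Z})$ is seen to be one-dimensional, both the lemma and the full algebra isomorphism $H_{\ast}(\Omega V;\mathbb{Z}/p\mathbb{Z})\cong\mathbb{Z}/p\mathbb{Z}[x,y]$ drop out together. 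The paper's route keeps the geometric and algebraic steps separate and delays Adams--Hilton until the global computation; this makes the lemma more self-contained (relying only on Serre spectral sequences and Blakers--Massey) at the cost of an extra spectral-sequence calculation that your approach bypasses.
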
 

\begin{proof} 
Recall the cofibration 
\(\nameddright{S^{4n-2}}{f}{P^{2n}(p^{r})}{i}{V}\). 
Define the space $F$ by the homotopy fibration 
\(\nameddright{F}{}{P^{2n}(p^{r})}{i}{V}\) 
and consider the mod-$p$ homology Serre spectral sequence for the principal fibration 
\(\nameddright{\Omega V}{}{F}{}{P^{2n}(p^{r})}\). 
The $E^{2}$-page of the spectral sequence is given by $\hlgy{P^{2n}(p^{r})}\otimes\hlgy{\Omega V}$. 
Let $u,v$ be the generators of $\hlgy{P^{2n}(p^{r})}$ in degrees $2n-1,2n$ respectively. 
By Lemma~\ref{deg4n}, $\hlgy{\Omega V}\cong\mathbb{Z}/p\mathbb{Z}[x,y]$ in degrees~$\leq 4n$, 
where $\vert x\vert=2n-2$ and $\vert y\vert=2n-1$. Since~$i$ is the inclusion of the $2n$-skeleton, 
we have $d^{2n-1}(u)=x$ and $d^{2n}(v)=y$. As the fibration is principal, the differentials in 
the spectral sequence are derivations so we obtain $d^{2n-1}(u\otimes x)=x^{2}$, 
$d^{2n-1}(u\otimes y)=xy$ and $d^{2n}(v\otimes y)=v^{2}$. Thus by the $E^{2n+1}$-page 
of the spectral sequence there is only one element left in degrees~$\leq 4n-2$, and that is 
the image of the $E^{2}$-page element $v\otimes x$. For degree reasons, this element is 
in the kernel of all higher differentials and therefore survives the spectral sequence. Thus 
the $(4n-2)$-skeleton of $F$ is $S^{4n-2}$. 

Returning again to the cofibration 
\(\nameddright{S^{4n-2}}{f}{P^{2n}(p^{r})}{i}{V}\), 
there is clearly a lift 
\[\diagram 
       & S^{4n-2}\dto^{f}\dlto_{\lambda} \\ 
       F\rto & P^{2n}(p^{r}) 
  \enddiagram\] 
for some map $\lambda$. By the Blakers-Massey Theorem, $\lambda$ is a homotopy 
equivalence in dimensions less than $4n-2$, so up to multiplication by a unit, $\lambda$ may 
be regarded as the inclusion of the bottom cell of $F$. Taking adjoints, $\widetilde{f}$ 
factors as the composite 
\(\nameddright{S^{4n-3}}{\widetilde{\lambda}}{\Omega F}{}{\Omega P^{2n}(p^{r})}\), 
where $\widetilde{\lambda}$ is the adjoint of $\lambda$. Now $\widetilde{\lambda}$ 
is the inclusion of the bottom cell in $\Omega F$, and Lemma~\ref{leastker} implies that 
the inclusion of this bottom cell has image equal to the 
generator of least degree in the kernel of $(\Omega i)_{\ast}$, which is $[x,y]$. 
Thus there is a choice of generator $\iota_{4n-3}$ in $H_{4n-3}(S^{4n-3};\mathbb{Z}/p\mathbb{Z})$ 
such that $\widetilde{f}_{\ast}(\iota_{4n-3})=[x,y]$. 
\end{proof} 

The low degree calculations made so far now let us calculate $\hlgy{\Omega V;\mathbb{Z}/p\mathbb{Z}}$ 
and $(\Omega i)_{\ast}$ in full. 

\begin{proposition} 
   \label{hlgyloopV} 
   Let $V$ be a $(2n-2)$-connected, $(4n-1)$-dimensional Poincar\'{e} Duality 
   complex with $H_{2n-1}(V;\mathbb{Z})\cong\mathbb{Z}/p^{r}\mathbb{Z}$ 
   for $p$ a prime and $r\geq 1$. Then there is an algebra isomorphism 
   \[\hlgy{\Omega V;\mathbb{Z}/p\mathbb{Z}}\cong\mathbb{Z}/p\mathbb{Z}[x,y]\] 
   where $\vert x\vert=2n-2$, $\vert y\vert=2n-1$ and $\beta^{r}y=x$, where $\beta^{r}$ 
   is the $r^{th}$-Bockstein. Further,  in mod-$p$ homology the map 
   \(\namedright{\Omega P^{2n}(p^{r})}{\Omega i}{\Omega V}\) 
   induces the algebra epimorphism 
   \(\namedright{T(x,y)}{}{\mathbb{Z}/p\mathbb{Z}[x,y]}\).
\end{proposition}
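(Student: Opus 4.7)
The plan is to extend the mod-$p$ Serre spectral sequence analysis of Lemma~\ref{deg4n} from degrees $\leq 4n$ to all degrees. The key structural feature making this tractable is that $H_{*}(V;\mathbb{Z}/p\mathbb{Z})\cong\Lambda(u,v)$ has only four basis elements $\{1,u,v,uv\}$, so the $E^{2}$-page of the path-loop fibration $\Omega V\to PV\to V$ is the small tensor product $\{1,u,v,uv\}\otimes H_{*}(\Omega V;\mathbb{Z}/p\mathbb{Z})$, and the only candidate nonzero differentials are $d^{2n-1}$ and $d^{2n}$.

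I would argue by induction on the total degree $N$. Assuming $H_{<N}(\Omega V;\mathbb{Z}/p\mathbb{Z})\cong(\mathbb{Z}/p\mathbb{Z}[x,y])_{<N}$, the Leibniz rule propagates the transgressions $d^{2n-1}(u)=x$ and $d^{2n}(v)=y$, together with the non-transgressive differential $d^{2n-1}(uv)=t\cdot(x\otimes v)$ established in Lemma~\ref{deg4n}, to every $z\in H_{<N}(\Omega V;\mathbb{Z}/p\mathbb{Z})$ as
\[
d^{2n-1}(u\otimes z)=x\cdot z,\quad d^{2n}(v\otimes z)=\pm\, y\cdot z,\quad d^{2n-1}(uv\otimes z)=\pm\, v\otimes(x\cdot z).
\]
Tracking through $E^{2n-1}$ and $E^{2n}$ and using convergence to $H_{*}(\mathrm{point})$, these propagated differentials must precisely account for every positive-degree $E^{2}$-element in total degree $\leq N$, forcing $H_{N}(\Omega V;\mathbb{Z}/p\mathbb{Z})\cong(\mathbb{Z}/p\mathbb{Z}[x,y])_{N}$ and closing the induction.

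The Bockstein relation $\beta^{r}y=x$ transfers from $H_{*}(\Omega P^{2n}(p^{r});\mathbb{Z}/p\mathbb{Z})\cong T(x,y)$, where it is standard: $P^{2n}(p^{r})=\Sigma P^{2n-1}(p^{r})$ carries a nontrivial $\beta^{r}$ which propagates to loops by the Bott--Samelson theorem. Naturality of Bocksteins under the algebra map $(\Omega i)_{*}$, together with $(\Omega i)_{*}$ being an isomorphism in the degrees of $x$ and $y$, transfers the relation to $H_{*}(\Omega V;\mathbb{Z}/p\mathbb{Z})$. Finally, $(\Omega i)_{*}\colon T(x,y)\to\mathbb{Z}/p\mathbb{Z}[x,y]$ is the algebra map sending $x,y$ to $x,y$; its kernel contains $[x,y]$ by Lemma~\ref{leastker}, and by dimension-counting in each degree it is the canonical algebra epimorphism onto the polynomial algebra.

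The main obstacle is the inductive bookkeeping in the spectral sequence: one must verify that the Leibniz-propagated $d^{2n-1}$- and $d^{2n}$-differentials provide an exact pairing that kills every positive-degree element of $E^{2}$ in each new total degree. This is delicate because it simultaneously fixes both the dimension of $H_{*}(\Omega V;\mathbb{Z}/p\mathbb{Z})$ and the algebra relations. In particular, the Pontryagin ring here is genuinely non-graded-commutative (e.g.\ one must have $y^{2}\neq 0$ for $|y|$ odd and $p$ odd, since otherwise $v\otimes y$ would survive the spectral sequence), which is compatible with $\Omega V$ being only a single loop space.
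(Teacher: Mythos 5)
Your route is genuinely different from the paper's. After establishing that the adjoint of the attaching map hits $[x,y]$ (Lemma~\ref{tildefimage}), the paper does \emph{not} return to the Serre spectral sequence: it invokes the Adams--Hilton model $AH(V)=T(x,y,z;\,dz=[x,y])$, reads off $H(AH(V))\cong\mathbb{Z}/p\mathbb{Z}[x,y]$ as a single algebraic computation, and gets $(\Omega i)_{*}\colon T(x,y)\twoheadrightarrow\mathbb{Z}/p\mathbb{Z}[x,y]$ as the induced map of models for free. You instead continue the spectral-sequence analysis of Lemma~\ref{deg4n} into all degrees. What each approach buys is a trade: yours avoids DGA machinery and stays entirely inside the path--loop fibration, while the Adams--Hilton route replaces all of the spectral-sequence bookkeeping with the one algebraic fact that killing the commutator in $T(x,y)$ gives the polynomial ring.

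However, what you call ``the main obstacle --- the inductive bookkeeping'' is in fact the entire content of your proof, and as written you assert the conclusion of the induction rather than prove it. The step that needs to be carried out is the following. Fix $N>4n$ and assume $H_{<N}(\Omega V;\mathbb{Z}/p\mathbb{Z})\cong(\mathbb{Z}/p\mathbb{Z}[x,y])_{<N}$ as algebras. Then: (i) $E^{\infty}_{0,N}=0$ shows $H_{N}$ is spanned by $x\cdot H_{N-2n+2}$ and $y\cdot H_{N-2n+1}$, so $H_{*}$ is generated by $x$ and $y$ and (using $[x,y]=0$, already supplied by Lemma~\ref{deg4n}) is a quotient of $\mathbb{Z}/p\mathbb{Z}[x,y]$; (ii) $E^{\infty}_{2n-1,N-2n+2}=0$ forces $x\cdot\colon H_{N-2n+2}\to H_{N}$ to be injective, but to conclude this you first have to dispose of the potential incoming $d^{2n}$ from $E^{2n}_{4n-1,N-4n+3}$, which vanishes because $x$-multiplication on the known lower-degree part has no kernel; (iii) $E^{\infty}_{2n,N-2n+1}=0$ forces $\bar y\colon H_{N-2n+1}/xH_{N-4n+3}\to H_{N}/xH_{N-2n+2}$ to be injective. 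Combining (i)--(iii) with the Poincar\'{e}-series identity $\dim P_{N}=\dim P_{N-2n+2}+\dim P_{N-2n+1}-\dim P_{N-4n+3}$ (valid for $N>0$ since it rearranges to $(1-t^{2n-2})(1-t^{2n-1})\cdot\sum\dim P_{j}t^{j}=1$) pins down $\dim H_{N}=\dim P_{N}$ and closes the induction. None of these substeps is difficult, but they are exactly what replaces the paper's appeal to the Adams--Hilton model, and an argument that simply says the differentials ``must precisely account for'' the $E^{2}$-page has not yet done the work. Your Bockstein paragraph and the description of $(\Omega i)_{*}$ are fine as stated.
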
 

\begin{proof} 
In general, if $X$ is a simply-connected $CW$-complex and $R$ is a ring then there is an 
Adams-Hilton model $AH(X)$ for calculating $\hlgy{\Omega X;R}$ as an algebra. The 
model is a differential graded algebra of the form $T(a_{1},\ldots,a_{k};d)$ where 
$T(\ \ )$ is the free tensor algebra functor, there is a generator~$a_{i}$ for each cell 
of $X$, the degree of $a_{i}$ is one less than the dimension of the corresponding cell, 
and~$d$ is a differential. There is an algebra isomorphism $H(AH(X))\cong\hlgy{\Omega X; R}$. 

To describe $d$, let $X_{t}$ be the $t$-skeleton of $X$ and let 
\(\namedright{S^{t}}{f_{i}}{X_{t}}\) 
attach a $(t+1)$-cell corresponding to~$a_{i}$. Let $AH(X_{t})$ be the Adams-Hilton 
model obtained from $AH(X)$ by restriction to the generators corresponding to cells 
in $X_{t}$. Then $d(a_{i})$ is determined by the image of the adjoint 
\(\namedright{S^{t-1}}{\widetilde{f}_{i}}{\Omega X_{t}}\) 
in the Adams-Hilton model $AH(X_{t})$. 

In our case, as $V$ has three cells there is an Adams-Hilton model $AH(V)=T(x,y,z;d)$ 
with $\vert x\vert=2n-2$, $\vert y\vert=2n-1$ and $\vert z\vert=4n-2$, and an algebra 
isomorphism $H(AH(V))\cong\hlgy{\Omega V;\mathbb{Z}/p\mathbb{Z}}$. The inclusion 
of the $(4n-2)$-skeleton of $V$ is the map 
\(\namedright{P^{2n}(p^{r})}{i}{V}\), 
so $AH(P^{2n}(p^{r}))=T(x,y;d')$ is an Adams-Hilton model whose homology is 
isomorphic as an algebra to $\hlgy{\Omega P^{2n}(p^{r});\mathbb{Z}/p\mathbb{Z}}$. 
By the Bott-Samelson theorem, the latter is known to be $T(x,y)$, so $d'$ must be 
identically zero. Thus, in this case, 
$AH(P^{2n}(p^{r}))\cong\hlgy{\Omega P^{2n}(p^{r});\mathbb{Z}/p\mathbb{Z}}$, 
so to determine the differential $dz$, which corresponds to the attaching map 
\(\namedright{S^{4n-2}}{f}{P^{2n}(p^{r})}\) 
for the top cell of $V$, we need to determine the image in mod-$p$ homology of the adjoint 
\(\namedright{S^{4n-3}}{\tilde{f}}{\Omega P^{2n}(p^{r})}\). 
By Lemma~\ref{tildefimage} this image is $[x,y]$. Thus $dz=[x,y]$, so we obtain 
algebra isomorphisms 
\[\hlgy{\Omega V;\mathbb{Z}/p\mathbb{Z}}\cong H(AH(V))\cong 
    H(T(x,y,z; dz=[x,y]))\cong\mathbb{Z}/p\mathbb{Z}[x,y].\] 
Further, the skeletal inclusion 
\(\namedright{P^{2n}(p^{r})}{i}{V}\) 
induces the map of Adams-Hilton models 
\(\namedright{T(x,y;d')}{}{T(x,y,z;d)}\), 
which upon taking homology gives the projection 
\(\namedright{T(x,y)}{(\Omega i)_{\ast}}{\mathbb{Z}/p\mathbb{Z}[x,y]}\). 
\end{proof} 

Now specialize to $p$ being an odd prime; we will return to $p=2$ in Section~\ref{sec:p=2}. 
By~\cite{Bar}, for $m\leq (2n-2)p$ the homotopy groups $\pi_{m}(P^{2n}(p^{r}))$ 
have the property that $p^{r}\cdot\pi_{m}(P^{2n}(p^{r}))\cong 0$. Notice that 
$4n-2\leq (2n-2)p$ for all $n\geq 2$ and $p\geq 3$. Thus 
\(\namedright{S^{4n-2}}{f}{P^{2n}(p^{r})}\) 
extends to a map 
\(g\colon\namedright{P^{4n-1}(p^{r})}{}{P^{2n}(p^{r})}\), 
and there is a homotopy cofibration diagram 
\begin{equation} 
  \label{fgextend} 
  \diagram 
        S^{4n-2}\rto\ddouble & P^{4n-1}(p^{r})\rto^-{q}\dto^{g} & S^{4n-1}\dto^{h} \\ 
        S^{4n-2}\rto^-{f} & P^{2n}(p^{r})\rto^-{i} & V 
  \enddiagram 
\end{equation} 
where $q$ is the pinch map to the top cell and $h$ is an induced map of cofibres. Let 
\(\widetilde{h}\colon\namedright{S^{4n-2}}{}{\Omega V}\) 
be the adjoint of $h$. 

\begin{lemma} 
   \label{oddph} 
   Let $p$ be an odd prime and take mod-$p$ homology. If $n\geq 3$ then 
   $\widetilde{h}_{\ast}(\iota_{4n-2})=y^{2}$. If $n=2$ then 
   $\widetilde{h}_{\ast}(\iota_{4n-2})=y^{2}+t\cdot x^{3}$ for some $t\in\mathbb{Z}/p\mathbb{Z}$. 
\end{lemma}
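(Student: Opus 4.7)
The strategy is to pass through the cofibration diagram~(\ref{fgextend}) to rewrite $\widetilde{h}_{\ast}(\iota_{4n-2})$ as the image under $(\Omega i)_{\ast}$ of a class in $H_{4n-2}(\Omega P^{2n}(p^{r});\mathbb{Z}/p\mathbb{Z})$, and then pin down that class using Lemma~\ref{tildefimage} and naturality of the $r$-th Bockstein.

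First, by adjointness, the bottom-cell inclusion $S^{4n-2}\hookrightarrow\Omega S^{4n-1}$ sends $\iota_{4n-2}$ to the Bott-Samelson generator $y''\in H_{4n-2}(\Omega S^{4n-1};\mathbb{Z}/p\mathbb{Z})$, so $\widetilde{h}_{\ast}(\iota_{4n-2})=(\Omega h)_{\ast}(y'')$. By Bott-Samelson, $H_{\ast}(\Omega P^{4n-1}(p^{r});\mathbb{Z}/p\mathbb{Z})\cong T(a,b)$ with $|a|=4n-3$, $|b|=4n-2$, and $\beta^{r}b=a$; since $q$ is the pinch to the top cell, naturality of the homology suspension gives $(\Omega q)_{\ast}(b)=y''$. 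Looping the right-hand square of~(\ref{fgextend}) yields $\Omega h\circ\Omega q=\Omega i\circ\Omega g$, and so
\[\widetilde{h}_{\ast}(\iota_{4n-2})=(\Omega i)_{\ast}\bigl((\Omega g)_{\ast}(b)\bigr).\]

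Next, I analyze $(\Omega g)_{\ast}(b)$ inside $H_{\ast}(\Omega P^{2n}(p^{r});\mathbb{Z}/p\mathbb{Z})\cong T(x,y)$. Since $g$ restricts to $f$ on the bottom cell, $(\Omega g)_{\ast}(a)=\widetilde{f}_{\ast}(\iota_{4n-3})=[x,y]$ by Lemma~\ref{tildefimage}. Naturality of the $r$-th Bockstein then forces $\beta^{r}\bigl((\Omega g)_{\ast}(b)\bigr)=[x,y]$. A $\mathbb{Z}/p\mathbb{Z}$-basis of $T(x,y)$ in degree $4n-2$ is $\{y^{2},xy,yx\}$, together with $x^{3}$ when $n=2$ (for $n\geq 3$ the monomial $x^{3}$ lies in degree $6n-6>4n-2$). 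Treating $\beta^{r}$ as a graded derivation on the Pontryagin algebra and using $\beta^{r}x=0$, $\beta^{r}y=x$, one computes $\beta^{r}(y^{2})=xy-yx=[x,y]$, $\beta^{r}(xy)=\beta^{r}(yx)=x^{2}$, and $\beta^{r}(x^{3})=0$. Writing $(\Omega g)_{\ast}(b)=\alpha y^{2}+\beta xy+\gamma yx+\delta x^{3}$ and matching $\beta^{r}$ with $[x,y]$ forces $\alpha=1$ and $\beta+\gamma=0$, giving $(\Omega g)_{\ast}(b)=y^{2}+\beta[x,y]+\delta x^{3}$, with $\delta=0$ when $n\geq 3$.

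By Proposition~\ref{hlgyloopV}, $(\Omega i)_{\ast}$ is the algebra quotient $T(x,y)\twoheadrightarrow\mathbb{Z}/p\mathbb{Z}[x,y]$ which sends $[x,y]\mapsto 0$. Applying it yields $\widetilde{h}_{\ast}(\iota_{4n-2})=y^{2}$ for $n\geq 3$ and $\widetilde{h}_{\ast}(\iota_{4n-2})=y^{2}+\delta x^{3}$ for $n=2$, as required. The main technical point is justifying that $\beta^{r}$ is a graded derivation on the Pontryagin ring; this holds because all lower Bocksteins vanish on the Moore space $P^{2n}(p^{r})$, so the relevant classes sit on the $E^{r}$-page of the Bockstein spectral sequence where $d_{r}=\beta^{r}$ is a derivation with respect to the Pontryagin product. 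Once this is granted, the rest is a direct coefficient comparison.
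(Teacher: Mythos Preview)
Your approach is essentially the paper's: pass through the diagram~(\ref{fgextend}) (you use $\Omega g$ where the paper uses the adjoint $\widetilde{g}$, but these agree on the relevant classes), apply Lemma~\ref{tildefimage} and naturality of $\beta^{r}$ to pin down the degree-$(4n-2)$ class, and then push forward along $(\Omega i)_{\ast}$ using Proposition~\ref{hlgyloopV}. The strategy and conclusion are correct.

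There is, however, a degree-counting slip. You list $\{y^{2},xy,yx\}$ (plus $x^{3}$ when $n=2$) as a basis of $T(x,y)$ in degree $4n-2$, but $|xy|=|yx|=(2n-2)+(2n-1)=4n-3$. So for $n\geq 3$ the only monomial in degree $4n-2$ is $y^{2}$, and for $n=2$ the basis is $\{y^{2},x^{3}\}$. This actually \emph{simplifies} your argument: once $\beta^{r}((\Omega g)_{\ast}(b))=[x,y]$ and $\beta^{r}(y^{2})=[x,y]$, $\beta^{r}(x^{3})=0$, you immediately get $(\Omega g)_{\ast}(b)=y^{2}$ (or $y^{2}+\delta x^{3}$ when $n=2$) with no need to introduce the spurious coefficients $\beta,\gamma$ or to kill the $[x,y]$ term via $(\Omega i)_{\ast}$. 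With this correction your proof matches the paper's line for line.
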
 

\begin{proof} 
Let 
\(\widetilde{g}\colon\namedright{P^{4n-2}(p^{r})}{}{\Omega P^{2n}(p^{r})}\) 
be the adjoint of $g$ and first consider $\widetilde{g}_{\ast}$. By Lemma~\ref{tildefimage}, 
in mod-$p$ homology we have $\widetilde{f}_{\ast}(\iota_{4n-3})=[x,y]$. So if $u$ and $v$ 
are the generators in dimensions $4n-3$ and $4n-2$ of $\hlgy{P^{4n-2}(p^{r});\mathbb{Z}/p\mathbb{Z}}$ 
respectively, then the left square in~(\ref{fgextend}) implies that $\widetilde{g}_{\ast}(u)=[x,y]$. 
The naturality of the Bockstein therefore implies that 
$[x,y]=\widetilde{g}_{\ast}(u)=\widetilde{g}_{\ast}(\beta^{r}(v))=\beta^{r}(\widetilde{g}_{\ast}(v))$. 
The only generator of $H_{4n-2}(\Omega P^{2n}(p^{r});\mathbb{Z}/p\mathbb{Z})$ 
with a nonzero $r^{th}$-Bockstein is $\beta^{r}(y^{2})=xy-yx= [x,y]$. Thus 
$\widetilde{g}_{\ast}(v)=y^{2}+z$ where 
$\beta^{r}(z)=0$. If $n\geq 3$ then, for degree reasons, the only generator of 
$H_{4n-2}(\Omega P^{2n}(p^{r});\mathbb{Z}/p\mathbb{Z})$ is $y^{2}$. Thus 
$\widetilde{g}_{\ast}(v)=y^{2}$. If $n=2$ then $H_{4n-2}(\Omega P^{2n}(p^{r});\mathbb{Z}/p\mathbb{Z})$ 
has one other generator, that being $x^{3}$, so $\widetilde{g}_{\ast}(v)=y^{2}+t\cdot x^{3}$ for 
some $t\in\mathbb{Z}/p\mathbb{Z}$. 

Next, consider $\widetilde{h}_{\ast}$. Note that $q$ is the suspension of the pinch map 
\(\namedright{P^{4n-2}(p^{r})}{\bar{q}}{S^{4n-2}}\). 
Taking adjoints for the right square in~(\ref{fgextend}) then 
implies that $\Omega i\circ\widetilde{g}\simeq\widetilde{h}\circ\bar{q}$. Since~$\bar{q}_{\ast}(v)$ 
is a choice of $\iota_{4n-2}$ and $(\Omega i)_{\ast}$ is an epimorphism by Proposition~\ref{hlgyloopV}, 
from the description of~$\widetilde{g}_{\ast}(v)$ we obtain 
$\widetilde{h}_{\ast}(\iota_{4n-2})=y^{2}$ if $n\geq 3$ and 
$\widetilde{h}_{\ast}(\iota_{4n-2})=y^{2}+t\cdot x^{3}$ for some $t\in\mathbb{Z}/p\mathbb{Z}$ 
if $n=2$. 
\end{proof} 

\begin{remark} 
\label{oddphremark} 
Observe that~(\ref{fgextend}) implies that in mod-$q$ homology for $q$ a prime different 
from $p$, or in rational homology, the map $h$ induces an isomorphism. 
\end{remark}  

For any prime $p$, let $S^{2n-1}\{p^{r}\}$ be the homotopy fibre of the degree~$p^{r}$ 
map on $S^{2n-1}$. The principal fibration 
\(\nameddright{\Omega S^{2n-1}}{}{S^{2n-1}\{p^{r}\}}{}{S^{2n-1}}\) 
is induced by the degree~$p^{r}$ map so the mod-$p$ homology Serre spectral sequence 
collapses at the $E^{2}$-term, giving an isomorphism of $\mathbb{Z}/p\mathbb{Z}$-modules 
\[\begin{split} 
     \hlgy{S^{2n-1}\{p^{r}\};\mathbb{Z}/p\mathbb{Z}} & \cong 
        \hlgy{S^{2n-1};\mathbb{Z}/p\mathbb{Z}}\otimes\hlgy{\Omega S^{2n-1};\mathbb{Z}/p\mathbb{Z}} \\ 
        & \cong\Lambda(a)\otimes\mathbb{Z}/p\mathbb{Z}[b] 
\end{split}\]  
where $\vert a\vert=2n-1$, $\vert b\vert=2n-2$ and $\beta^{r}(a)=b$. Since $P^{2n}(p^{r})$ 
is the homotopy cofibre of the degree~$p^{r}$ map on $S^{2n-1}$, there is a 
homotopy fibration diagram 
\begin{equation} 
  \label{curlydgrm} 
  \diagram 
      \Omega S^{2n-1}\rto\dto^{\Omega j} & S^{2n-1}\{p^{r}\}\rto\dto^{s} & S^{2n-1}\rto^-{p^{r}}\dto 
           & S^{2n-1}\dto^{j} \\ 
       \Omega P^{2n}(p^{r})\rdouble & \Omega P^{2n}(p^{r})\rto & \ast\rto & P^{2n}(p^{r}) 
  \enddiagram 
\end{equation}  
where $j$ is the inclusion of the bottom cell and $s$ is an induced map of fibres. 
Observe that $j$ is the suspension of the map 
\(\bar{j}\colon\namedright{S^{2n-2}}{}{P^{2n-1}(p^{r})}\) 
that includes the bottom cell, and this inclusion induces an isomorphism in degree $2n-2$ 
in mod-$p$ homology. The naturality of the Bott-Samelson Theorem therefore implies that 
$(\Omega j)_{\ast}=(\Omega\Sigma\bar{j})_{\ast}$ is an algebra map sending $\mathbb{Z}/p\mathbb{Z}[b]$ 
isomorphically onto the subalgebra $\mathbb{Z}/p\mathbb{Z}[x]\subseteq T(x,y)$.  
The left square in~(\ref{curlydgrm}) then implies that~$s_{\ast}$ sends 
$\mathbb{Z}/p\mathbb{Z}[b]\subseteq\hlgy{S^{2n-1}\{p^{r}\};\mathbb{Z}/p\mathbb{Z}}$ 
isomorphically onto the subalgebra $\mathbb{Z}/p\mathbb{Z}[x]\subseteq T(x,y)$. 
The $r^{th}$-Bockstein is a differential, implying that $s_{\ast}$ sends 
$\Lambda(a)\otimes\mathbb{Z}/p\mathbb{Z}[b]$ isomorphically onto the sub-module  
$\Lambda(y)\otimes\mathbb{Z}/p\mathbb{Z}[x]\subseteq T(x,y)$. 

Let $t$ be the composite 
\[t\colon\nameddright{S^{2n-1}\{p^{r}\}}{s}{\Omega P^{2n}(p^{r})}{\Omega i}{\Omega V}.\] 
Then the description of $s_{\ast}$ implies that $t_{\ast}$ is an injection onto the 
submodule $\Lambda(y)\otimes\mathbb{Z}/p\mathbb{Z}[x]\subseteq\mathbb{Z}/p\mathbb{Z}[x,y]$. 
Let $e$ be the composite 
\[e\colon\llnameddright{S^{2n-1}\{p^{r}\}\times\Omega S^{4n-1}}{t\times\Omega h} 
      {\Omega V\times\Omega V}{\mu}{\Omega V}\] 
where $\mu$ is the loop space multiplication. Again, we focus on odd primes, leaving 
$p=2$ to Section~\ref{sec:p=2}. 

\begin{proposition} 
   \label{loopVdecomp} 
   Let $p$ be an odd prime. If $n\geq 2$ then the map 
   \(\namedright{S^{2n-1}\{p^{r}\}\times\Omega S^{4n-1}}{e}{\Omega V}\) 
   is a homotopy equivalence. 
\end{proposition}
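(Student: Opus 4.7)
The plan is to prove $e$ is a weak homotopy equivalence by checking it induces an isomorphism on $\mathbb{Z}/q\mathbb{Z}$-homology for every prime $q$ and on rational homology. Since both the source $S^{2n-1}\{p^{r}\} \times \Omega S^{4n-1}$ and the target $\Omega V$ are simply-connected $CW$-complexes of finite type (as $n \geq 2$), Whitehead's theorem will then upgrade this to a homotopy equivalence.

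For the mod-$p$ calculation, Proposition~\ref{hlgyloopV} identifies the target as $\mathbb{Z}/p\mathbb{Z}[x,y]$, while the K\"{u}nneth theorem together with the description of $t_*$ established before the statement identifies the source homology as $\Lambda(y) \otimes \mathbb{Z}/p\mathbb{Z}[x] \otimes \mathbb{Z}/p\mathbb{Z}[w]$, where $w$ is the generator of $H_{4n-2}(\Omega S^{4n-1}; \mathbb{Z}/p\mathbb{Z})$. Because $\mu$ induces the Pontryagin product and $\Omega h$ is an $H$-map (so $(\Omega h)_{*}$ is multiplicative), one computes
\[ e_{*}(a^{\epsilon} b^{i} \otimes w^{j}) \;=\; t_{*}(a^{\epsilon} b^{i}) \cdot ((\Omega h)_{*}(w))^{j} \;=\; x^{i} y^{\epsilon}(y^{2} + \tau x^{3})^{j}, \]
where by Lemma~\ref{oddph} we have $\tau = 0$ for $n \geq 3$ and $\tau \in \mathbb{Z}/p\mathbb{Z}$ for $n = 2$. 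Since $x$ and $y$ commute in the target, binomial expansion gives
\[ e_{*}(a^{\epsilon} b^{i} \otimes w^{j}) \;=\; \sum_{k=0}^{j} \binom{j}{k} \tau^{k} x^{i+3k} y^{2j-2k+\epsilon}, \]
whose leading term, in the order on monomials of decreasing $y$-exponent, is $x^{i} y^{2j+\epsilon}$. Because $(\epsilon, i, j) \mapsto (i, 2j + \epsilon)$ is a degree-preserving bijection $\{0,1\} \times \mathbb{Z}_{\geq 0}^{2} \to \mathbb{Z}_{\geq 0}^{2}$, the matrix of $e_{*}$ in every degree is triangular with $1$'s on the diagonal, hence invertible.

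For primes $q \neq p$ and for rational coefficients, the Moore space $P^{2n}(p^{r})$ is acyclic (as $p^{r}$ is a unit in these coefficient rings), so the cofibration $P^{2n}(p^{r}) \to V \to S^{4n-1}$ shows that $V$ is $q$-locally and rationally equivalent to $S^{4n-1}$; by Remark~\ref{oddphremark} this equivalence is realised by $h$, and hence $\Omega h$ is a mod-$q$ and rational homology isomorphism. Meanwhile, $S^{2n-1}\{p^{r}\}$, being the homotopy fibre of the degree-$p^{r}$ self-map of $S^{2n-1}$, is $q$-locally and rationally contractible. Therefore $e$ restricted to $\{*\} \times \Omega S^{4n-1}$ becomes $\Omega h$, which is an equivalence, so $e$ itself is a mod-$q$ and rational homology isomorphism.

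The main technical care is required in the mod-$p$ step when $n = 2$, where the $\tau x^{3}$ correction in Lemma~\ref{oddph} prevents $e_{*}(a^{\epsilon} b^{i} \otimes w^{j})$ from being a single monomial; filtering by $y$-exponent is precisely what reduces this to the triangularity argument above and absorbs the perturbation into strictly lower-order terms.
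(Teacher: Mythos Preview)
Your proof is correct. For $n\geq 3$ and for the primes $q\neq p$ (and rationally) it coincides with the paper's argument essentially verbatim. The genuine difference is in the $n=2$ mod-$p$ case. There the paper does not compute $e_{\ast}$ directly: instead it builds a map $\bar q\colon V\to S^{4}$ extending the pinch $P^{4}(p^{r})\to S^{4}$ (using that $\pi_{6}(S^{4})\cong\mathbb{Z}/2\mathbb{Z}$ has no odd torsion), composes with the Hopf-invariant-one projection $\Omega S^{4}\to\Omega S^{7}$ to split an $\Omega S^{7}$ off $\Omega V$, and then identifies the complementary fibre with $S^{3}\{p^{r}\}$ by an Euler--Poincar\'{e} count. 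Your alternative exploits the commutativity of $H_{\ast}(\Omega V;\mathbb{Z}/p\mathbb{Z})\cong\mathbb{Z}/p\mathbb{Z}[x,y]$ to expand $(y^{2}+\tau x^{3})^{j}$ binomially and read off triangularity of $e_{\ast}$ with respect to the $y$-exponent filtration. This is more elementary---it bypasses the construction of $\bar q$ and the Hopf-invariant-one splitting of $\Omega S^{4}$---and proves the proposition in one uniform stroke for all $n\geq 2$. The paper's route, on the other hand, produces the extra geometric datum $\bar q\colon V\to S^{4}$, which is exactly what is needed later in Section~\ref{sec:p=2} to run the analogous argument at the prime~$2$; your filtration argument would not supply that map.
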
 

\begin{proof} 
We will show that after localizing at each prime and rationally, $e$ is a homotopy 
equivalence. This would imply that $e$ is an integral homotopy equivalence. 

First consider the case when $n\geq 3$. Localizing at $p$,   
$\hlgy{\Omega S^{4n-1};\mathbb{Z}/p\mathbb{Z}}\cong\mathbb{Z}/p\mathbb{Z}[c]$ 
for $\vert c\vert=4n-2$. The restriction of $\Omega h$ to the bottom cell of $\Omega S^{4n-1}$ 
is $\widetilde{h}$, so by Lemma~\ref{oddph}, $(\Omega h)_{\ast}(c)=y^{2}$. As $(\Omega h)_{\ast}$ 
is an algebra map, it sends $\mathbb{Z}/p\mathbb{Z}[c]$ isomorphically onto the subalgebra 
$\mathbb{Z}/p\mathbb{Z}[y^{2}]\subseteq\mathbb{Z}/p\mathbb{Z}[x,y]$. The description 
of $t_{\ast}$ then implies that $e_{\ast}$ induces an isomorphism in mod-$p$ homology, 
implying that $e$ is a $p$-local homotopy equivalence by Whitehead's Theorem. 
Localized at a prime $q\neq p$ or rationally, $S^{2n-1}\{p^{r}\}$ is 
contractible, $V$ is equivalent to $S^{4n-1}$, and Remark~\ref{oddphremark} implies that 
$h$ is a $q$-local or rational homotopy equivalence. Thus, in these cases, $e$ is also 
a $q$-local or rational homotopy equivalence. 

Next, consider the case when $n=2$. Localize at $p$. Going back to the description of $V$ as a 
$CW$-complex, observe that the composite 
\(\nameddright{S^{6}}{f}{P^{4}(p^{r})}{q}{S^{4}}\) 
is null homotopic, where $q$ is the pinch map to the top cell. This is because 
the generator of $\pi_{6}(S^{4})\cong\mathbb{Z}/2\mathbb{Z}$ cannot factor through 
an odd primary Moore space. Thus $q$ extends to a map 
\(\overline{q}\colon\namedright{V}{}{S^{4}}\). 
Since $\overline{q}$ extends $q$, in mod-$p$ homology we have $\Omega\overline{q}$ inducing 
the projection  
\(\namedright{\mathbb{Z}/p\mathbb{Z}[x,y]}{}{\mathbb{Z}/p\mathbb{Z}[y]}\). 
Now consider the composite 
\(\nameddright{\Omega S^{7}}{\Omega h}{\Omega V}{\Omega\overline{q}}{\Omega S^{4}}\). 
The restriction of $\Omega h$ to the bottom cell is $\widetilde{h}$, so Lemma~\ref{oddph} 
implies that $(\Omega\overline{q}\circ\Omega h)_{\ast}$ is an injection onto the subalgebra 
$\mathbb{Z}/p\mathbb{Z}[y^{2}]\subseteq\mathbb{Z}/p\mathbb{Z}[y]$. Since 
$\Omega S^{4}\simeq S^{3}\times\Omega S^{7}$ because of the existence of an element 
of Hopf invariant one, there is a projection 
\(\pi\colon\namedright{\Omega S^{4}}{}{\Omega S^{7}}\) 
which in mod-$p$ homology projects $\mathbb{Z}/p\mathbb{Z}[y]$ onto $\mathbb{Z}/p\mathbb{Z}[y^{2}]$. 
Thus the composition 
\(\namedddright{\Omega S^{7}}{\Omega h}{\Omega V}{\Omega\overline{q}}{\Omega S^{4}} 
      {\pi}{\Omega S^{7}}\) 
induces an isomorphism in homology and so is a homotopy equivalence. Consequently, there 
is a homotopy equivalence $\Omega V\simeq F\times\Omega S^{7}$ where $F$ is the homotopy 
fibre of $\pi\circ\Omega\overline{q}$. Notice that as $\overline{q}$ extends $q$, from the definition 
of $s$ there is a homotopy commutative diagram 
\[\diagram 
       S^{3}\{p^{r}\}\rto^-{s}\dto & \Omega P^{4}(p^{r})\rto^-{\Omega i}\dto^{\Omega q} 
            & \Omega V\dto^{\Omega\overline{q}} \\ 
        S^{3}\rto^-{E} & \Omega S^{4}\rdouble & \Omega S^{4} 
  \enddiagram\] 
where $E$ is the inclusion of the bottom cell. The top row is the definition of $t$. 
Consequently, $\pi\circ\Omega\overline{q}\circ t$ is null homotopic, so $t$ lifts to a map 
\(\overline{t}\colon\namedright{S^{3}\{p^{r}\}}{}{F}\). 
Since $t_{\ast}$ is an injection in mod-$p$ homology, so is $\overline{t}_{\ast}$. The decomposition 
$\Omega V\simeq F\times\Omega S^{7}$ implies that $F$ has the same Euler-Poincar\'{e} 
series as $S^{3}\{p^{r}\}$, therefore $\overline{t}_{\ast}$ is an isomorphism. Hence the 
map $e$ induces an isomorphism in mod-$p$ homology and so is a $p$-local homotopy equivalence by Whitehead's Theorem. Localizing at a prime $q\neq p$ or rationally, 
arguing exactly as in the $n\geq 3$ case shows that $e$ is also a $q$-local or rational 
homotopy equivalence. 
\end{proof} 

We can go further. In general, suppose that there is a homotopy pushout 
\[\diagram 
       A\rto^-{a}\dto^{b} & B\dto^{c} \\ 
       C\rto^-{d} & D 
  \enddiagram\] 
of simply-connected spaces where $A$ is a suspension. The suspension hypothesis 
implies that the set of homotopy classes of maps $[A,Z]$ is a group for any space $Z$. 
A Mayer-Vietoris style argument then shows that there is a homotopy cofibration 
\[\lnameddright{A}{a-b}{B\vee C}{c+d}{D}.\] 
Since $P^{4n-1}(p^{r})$ is the suspension of $P^{4n-2}(p^{r})$, applying this to 
the right square in~(\ref{fgextend}) we obtain a homotopy cofibration 
\begin{equation} 
   \label{gVcofib} 
   \lnameddright{P^{4n-1}(p^{r})}{g-q}{P^{2n}(p^{r})\vee S^{4n-1}}{i+h}{V}.  
\end{equation} 
   
\begin{lemma} 
   \label{i+hinv} 
   The map 
   \(\llnamedright{\Omega(P^{2n}(p^{r})\vee S^{4n-1})}{\Omega(i+h)}{\Omega V}\) 
   has a right homotopy inverse. 
\end{lemma}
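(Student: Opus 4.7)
The plan is to apply Lemma~\ref{inverselemma} with $Z=V$, $W=P^{2n}(p^{r})\vee S^{4n-1}$, and the map $h$ of that lemma taken to be $i+h$. Proposition~\ref{loopVdecomp} produces the homotopy equivalence
\[e\colon\nameddright{S^{2n-1}\{p^{r}\}\times\Omega S^{4n-1}}{t\times\Omega h}{\Omega V\times\Omega V}{\mu}{\Omega V},\]
so with $X=S^{2n-1}\{p^{r}\}$, $Y=\Omega S^{4n-1}$, $f=t$ and $g=\Omega h$, the hypothesis of Lemma~\ref{inverselemma} on the existence of the equivalence is already in place. It therefore remains only to lift the maps $t$ and $\Omega h$ through $\Omega(i+h)$.

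Let $j_{1}\colon\namedright{P^{2n}(p^{r})}{}{P^{2n}(p^{r})\vee S^{4n-1}}$ and $j_{2}\colon\namedright{S^{4n-1}}{}{P^{2n}(p^{r})\vee S^{4n-1}}$ be the two wedge summand inclusions. By the universal property of the wedge, the map $i+h$ satisfies $(i+h)\circ j_{1}=i$ and $(i+h)\circ j_{2}=h$. Looping these identities gives $\Omega(i+h)\circ\Omega j_{1}=\Omega i$ and $\Omega(i+h)\circ\Omega j_{2}=\Omega h$. Recalling that $t=\Omega i\circ s$, the composite $\Omega j_{1}\circ s\colon\namedright{S^{2n-1}\{p^{r}\}}{}{\Omega(P^{2n}(p^{r})\vee S^{4n-1})}$ is then a lift of $t$ through $\Omega(i+h)$, and $\Omega j_{2}\colon\namedright{\Omega S^{4n-1}}{}{\Omega(P^{2n}(p^{r})\vee S^{4n-1})}$ is a lift of $\Omega h$ through $\Omega(i+h)$.

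With both factor maps of the equivalence $e$ lifted, Lemma~\ref{inverselemma} immediately produces the desired right homotopy inverse for $\Omega(i+h)$. There is no genuine obstacle here: once Proposition~\ref{loopVdecomp} is in hand, the argument is a direct bookkeeping exercise with the wedge inclusions. The only point worth noting is the mild coincidence of notation between the symbol $h$ used for the map $\namedright{S^{4n-1}}{}{V}$ and the map $h$ appearing in the statement of Lemma~\ref{inverselemma}; one should simply keep in mind that in applying the lemma, its ``$h$'' is our ``$i+h$''.
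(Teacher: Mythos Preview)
Your proof is correct and follows essentially the same argument as the paper's own proof: invoke Proposition~\ref{loopVdecomp} for the equivalence $e$, then apply Lemma~\ref{inverselemma} by lifting $t$ and $\Omega h$ through $\Omega(i+h)$ via the wedge summand inclusions. The only differences are cosmetic (you write $j_{1},j_{2}$ for the inclusions where the paper writes $i_{1},i_{2}$, and you add a remark about the notational clash on $h$).
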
 

\begin{proof} 
By Proposition~\ref{loopVdecomp} there is a homotopy equivalence 
\(\lnameddright{S^{2n-1}\{p^{r}\}\times\Omega S^{4n-1}}{t\times\Omega h} 
      {\Omega V\times\Omega V}{\mu}{\Omega V}\). 
By Lemma~\ref{inverselemma}, to show that $\Omega(i+h)$ has a right homotopy 
inverse it suffices to show that both $t$ and $\Omega h$ lift through $\Omega(i+h)$. 

Let 
\(i_{1}\colon\namedright{P^{2n}(p^{r})}{}{P^{2n}(p^{r})\vee S^{4n-1}}\) 
and 
\(i_{2}\colon\namedright{S^{4n-1}}{}{P^{2n}(p^{r})\vee S^{4n-1}}\) 
be the inclusions of the left and right wedge summands respectively. Then 
$(i+h)\circ i_{1}=i$ and $(i+h)\circ i_{2}=h$. By definition, $t=\Omega i\circ s$, 
so the composite 
\(\nameddright{S^{2n-1}\{p^{r}\}}{s}{\Omega P^{2n}(p^{r})}{\Omega i_{1}} 
      {\Omega(P^{2n}(p^{r})\vee S^{4n-1})}\stackrel{\Omega(i+h)}{\llarrow}\Omega V\) 
equals $t$, while 
\(\namedright{\Omega S^{4n-1}}{\Omega i_{2}} 
      {\Omega(P^{2n}(p^{r})\vee S^{4n-1})}\stackrel{\Omega(i+h)}{\llarrow}\Omega V\) 
is $\Omega h$. Thus both $t$ and $\Omega h$ lift through $\Omega(i+h)$, as required. 
\end{proof} 


Next, the homotopy fibre of $\Omega(i+h)$ is identified. Let 
\(s\colon\namedright{\Omega V}{}{\Omega(P^{2n}(p^{r})\vee S^{4n-1})}\) 
be a right homotopy inverse for $\Omega(i+h)$. Let $\gamma$ be the composite 
\[\gamma\colon\nameddright{\Sigma\Omega V}{\Sigma s} 
     {\Sigma\Omega(P^{2n}(p^{r})\vee S^{4n-1})}{ev}{P^{2n}(p^{r})\vee S^{4n-1}}.\] 
Let $\frak{g}=g-q$. 

\begin{proposition} 
   \label{i+hfib} 
   There is a homotopy fibration 
   \[\llnameddright{(P^{4n-1}(p^{r})\wedge\Omega V)\vee P^{4n-1}(p^{r})}{[\frak{g},\gamma]+\frak{g}} 
          {P^{2n}(p^{r})\vee S^{4n-1}}{i+h}{V}\] 
   which splits after looping to give a homotopy equivalence 
   \[\Omega(P^{2n}(p^{r})\vee S^{4n-1})\simeq\Omega V\times 
          \Omega((P^{4n-1}(p^{r})\wedge\Omega V)\vee P^{4n-1}(p^{r})).\] 
\end{proposition}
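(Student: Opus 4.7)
The plan is a direct application of Theorem~\ref{GTcofib} to the homotopy cofibration~(\ref{gVcofib}). First I would rewrite $P^{4n-1}(p^{r})$ as the suspension $\Sigma P^{4n-2}(p^{r})$, so that~(\ref{gVcofib}) takes the shape
\[\lnameddright{\Sigma P^{4n-2}(p^{r})}{\frak{g}}{P^{2n}(p^{r})\vee S^{4n-1}}{i+h}{V}\]
required by Theorem~\ref{GTcofib}, with $A=P^{4n-2}(p^{r})$, $Y=P^{2n}(p^{r})\vee S^{4n-1}$, $Z=V$, $f=\frak{g}$ and $h=i+h$.

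Next I would check the hypothesis that $\Omega(i+h)$ has a right homotopy inverse; this is precisely the content of Lemma~\ref{i+hinv}. Fixing such a right homotopy inverse $s$, the map $\gamma$ of the proposition is exactly the evaluation composite $\Sigma\Omega V\stackrel{\Sigma s}{\longrightarrow}\Sigma\Omega(P^{2n}(p^{r})\vee S^{4n-1})\stackrel{ev}{\longrightarrow}P^{2n}(p^{r})\vee S^{4n-1}$ used in Theorem~\ref{GTcofib}. Applying that theorem therefore produces a homotopy fibration
\[\llnameddright{(\Sigma\Omega V\wedge P^{4n-2}(p^{r}))\vee\Sigma P^{4n-2}(p^{r})}{[\gamma,\frak{g}]+\frak{g}}
   {P^{2n}(p^{r})\vee S^{4n-1}}{i+h}{V}\]
which splits after looping to give
\[\Omega(P^{2n}(p^{r})\vee S^{4n-1})\simeq\Omega V\times\Omega\bigl((\Sigma\Omega V\wedge P^{4n-2}(p^{r}))\vee\Sigma P^{4n-2}(p^{r})\bigr).\]

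Finally I would rewrite the total space of the fibre using the standard identification $\Sigma X\wedge Y\simeq X\wedge\Sigma Y$ together with $\Sigma P^{4n-2}(p^{r})=P^{4n-1}(p^{r})$, obtaining
\[\Sigma\Omega V\wedge P^{4n-2}(p^{r})\simeq\Omega V\wedge\Sigma P^{4n-2}(p^{r})\simeq P^{4n-1}(p^{r})\wedge\Omega V,\]
which converts the fibration and the loop space equivalence into the form stated in Proposition~\ref{i+hfib}.

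Since Theorem~\ref{GTcofib} and Lemma~\ref{i+hinv} do all the real work, there is no substantial obstacle; the only point requiring any care is to confirm that the swap $\Sigma\Omega V\wedge P^{4n-2}(p^{r})\simeq P^{4n-1}(p^{r})\wedge\Omega V$ is compatible with the Whitehead product map $[\gamma,\frak{g}]$, so that the attaching map of Theorem~\ref{GTcofib} really is identified with $[\frak{g},\gamma]+\frak{g}$ up to sign conventions for the Whitehead bracket.
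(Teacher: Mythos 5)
Your proposal is correct and takes essentially the same approach as the paper: apply Theorem~\ref{GTcofib} to the cofibration~(\ref{gVcofib}) together with the right homotopy inverse from Lemma~\ref{i+hinv}. The only difference is that you spell out the cosmetic identification $\Sigma\Omega V\wedge P^{4n-2}(p^{r})\simeq P^{4n-1}(p^{r})\wedge\Omega V$ and the corresponding swap in the Whitehead product, which the paper leaves implicit.
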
 

\begin{proof} 
Since there is a homotopy cofibration 
\(\nameddright{P^{4n-1}(p^{r})}{\frak{g}}{P^{2n}(p^{r})\vee S^{4n-1}}{i+h}{V}\) 
and, by Lemma~\ref{i+hinv}, $\Omega(i+h)$ has a right homotopy inverse, the 
assertions follow immediately from Theorem~\ref{GTcofib}. 
\end{proof} 

Note that Proposition~\ref{loopVdecomp} proves Theorem~\ref{introloopMdecomp} in the special 
case when $M=V$ while Proposition~\ref{i+hfib} proves Theorems~\ref{introI+Hfib}.

\section{The general case when $H^{2n}(M;\mathbb{Z})$ is odd torsion} 
\label{sec:rankm} 

Let $M$ be a $(2n-2)$-connected $(4n-1)$-dimensional Poincar\'{e} Duality 
complex such that $n\geq 2$ and 
\[H^{2n}(M;\mathbb{Z})\cong\bigoplus_{k=1}^{\ell}\mathbb{Z}/p_{k}^{r_{k}}\mathbb{Z}\]  
where each $p_{k}$ is an odd prime. Then the $2n$-skeleton $M_{2n}$ of $M$ is homotopy 
equivalent to a wedge of Moore spaces  
\[M_{2n}\simeq\bigvee_{k=1}^{\ell} P^{2n}(p_{k}^{r_{k}}).\] 
For $1\leq k\leq\ell$, let $a_{k}\in H^{2n-1}(M;\mathbb{Z}/p_{k}\mathbb{Z})$ 
and $b_{k}\in H^{2n}(M;\mathbb{Z}/p_{k}\mathbb{Z})$ be generators corresponding 
to the wedge summand $P^{2n}(p_{k}^{r_{k}})$ of $M_{2n}$. In~\cite[Section 6]{BW}, 
Beben and Wu used a Poincar\'{e} Duality argument to prove the following. 

\begin{lemma} 
   \label{pkchoice} 
   Let $p\in\{p_{1},\ldots,p_{\ell}\}$ be an odd prime. Let $\{i_{1},\ldots,i_{t}\}\subseteq\{1,\ldots,\ell\}$ 
   be the subset satisfying $p_{i_{j}}=p$ and let $r=\max\{r_{i_{1}},\ldots,r_{i_{t}}\}$. If 
   $p_{i_{j}}^{r_{i_{j}}}=p^{r}$ then $a_{i_{j}}\cup b_{i_{j}}$ is a generator of 
   $H^{4n-1}(M;\mathbb{Z}/p\mathbb{Z})$.~$\qqed$ 
\end{lemma}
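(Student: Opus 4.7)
The plan is to combine the non-degeneracy of the mod-$p$ cup product pairing, supplied by Poincar\'{e} duality, with the structure of the linking form on the $p$-primary torsion of $H^{2n}(M;\mathbb{Z})$. First I would observe that since $M$ is $(2n-2)$-connected with $H^{2n}(M;\mathbb{Z})$ entirely torsion, both $H^{2n-1}(M;\mathbb{Z}/p)$ and $H^{2n}(M;\mathbb{Z}/p)$ are $t$-dimensional $\mathbb{Z}/p$-vector spaces with bases $\{a_{i_1},\ldots,a_{i_t}\}$ and $\{b_{i_1},\ldots,b_{i_t}\}$ read off from the wedge decomposition. Poincar\'{e} duality then makes the cup product pairing
\[H^{2n-1}(M;\mathbb{Z}/p)\otimes H^{2n}(M;\mathbb{Z}/p)\longrightarrow H^{4n-1}(M;\mathbb{Z}/p)\cong\mathbb{Z}/p\]
non-degenerate, so the $t\times t$ matrix $A=(a_{i_j}\cup b_{i_k})$ over $\mathbb{Z}/p$ is invertible.

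Next I would promote this to a block-triangular picture by invoking the integral linking form $L_{p}$ on the $p$-primary subgroup $T_{p}\subseteq H^{2n}(M;\mathbb{Z})$. Let $\tilde b_{i_j}\in T_{p}$ be the order-$p^{r_{i_j}}$ integral class lifting $b_{i_j}$, and lift $a_{i_j}$ to $\hat a_{i_j}\in H^{2n-1}(M;\mathbb{Z}/p^{r_{i_j}})$ with integral Bockstein $\tilde b_{i_j}$. The integer $\langle\hat a_{i_j}\cup\tilde b_{i_k},[M]\rangle\in\mathbb{Z}/p^{r_{i_j}}$ computes $L_{p}(\tilde b_{i_j},\tilde b_{i_k})$ after division by $p^{r_{i_j}}$, and reduces mod $p$ to $a_{i_j}\cup b_{i_k}$. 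Since $L_{p}$ is symmetric and $p^{r_{i_k}}\tilde b_{i_k}=0$, the order of $L_{p}(\tilde b_{i_j},\tilde b_{i_k})$ divides $p^{\min(r_{i_j},r_{i_k})}$; when $r_{i_j}>r_{i_k}$ this forces $p\mid\langle\hat a_{i_j}\cup\tilde b_{i_k},[M]\rangle$ and hence $a_{i_j}\cup b_{i_k}=0$ in $\mathbb{Z}/p$. Ordering indices by decreasing $r_{i_j}$ therefore makes $A$ block lower-triangular, and invertibility of $A$ forces each diagonal block to be invertible; in particular the top block, of size $s\times s$ indexed by $\{j:r_{i_j}=r\}$, is invertible over $\mathbb{Z}/p$.

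The hard part will be strengthening invertibility of the top block to non-vanishing of each individual diagonal entry $a_{i_j}\cup b_{i_j}$ with $r_{i_j}=r$. Symmetry of $L_{p}$ combined with $r_{i_j}=r_{i_k}=r$ inside the top block yields $a_{i_j}\cup b_{i_k}=a_{i_k}\cup b_{i_j}$, so the top block is a symmetric non-degenerate bilinear form on an $s$-dimensional $\mathbb{Z}/p$-vector space. Since $p$ is odd the scalar~$2$ is invertible, and such a form admits an orthogonal diagonalisation; realising this diagonalisation by a corresponding change of integral generators of the summands of $T_{p}$ with $r_{i_j}=r$ produces a refined wedge decomposition of $M_{2n}$ in which every diagonal entry of the top block of $A$ is a unit of $\mathbb{Z}/p$, giving the lemma. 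The oddness of $p$ is essential here, as symmetric non-degenerate bilinear forms over $\mathbb{F}_{2}$ need not admit orthogonal diagonalisation.
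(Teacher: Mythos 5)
The paper does not supply its own proof of this lemma -- it is attributed directly to \cite[Section 6]{BW} and marked with $\Box$ -- so there is no internal argument to compare against; I can only assess your proposal on its own terms. The ingredients you assemble are the right ones and the computations are correct: Poincar\'{e} duality with $\mathbb{Z}/p$ coefficients makes the pairing $H^{2n-1}(M;\mathbb{Z}/p)\otimes H^{2n}(M;\mathbb{Z}/p)\to\mathbb{Z}/p$ perfect, so $A=(a_{i_j}\cup b_{i_k})$ is invertible; the bound $\lambda(\tilde b_{i_j},\tilde b_{i_k})\in\frac{1}{p^{\min(r_{i_j},r_{i_k})}}\mathbb{Z}/\mathbb{Z}$ does force $a_{i_j}\cup b_{i_k}=0$ when $r_{i_j}>r_{i_k}$, giving the block lower-triangular shape; and symmetry of the linking form descends mod $p$ to symmetry of each diagonal block. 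This is almost certainly the ``Poincar\'{e} Duality argument'' the paper has in mind.

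The one place you should slow down is the final sentence, and you half-acknowledge it yourself: orthogonal diagonalisation proves that \emph{some} choice of wedge decomposition makes every top-block diagonal entry a unit, not that the given $a_{i_j}\cup b_{i_j}$ are already units. These are genuinely different statements. A symmetric invertible matrix over $\mathbb{F}_p$ can have entirely zero diagonal (e.g.\ $\left(\begin{smallmatrix}0&1\\1&0\end{smallmatrix}\right)$), and nothing in Poincar\'{e} duality or the Bockstein structure rules out a PD complex whose attaching map is a pure Whitehead product between two $P^{2n}(p^{r})$ summands, which would realise exactly that basis. So the lemma must be read with an implicit ``for a suitable choice of wedge decomposition'' (or equivalently, a suitable choice of the generators $a_k,b_k$); fortunately the sole use of the lemma in the paper -- to show $V$ is a Poincar\'{e} duality complex -- only requires the existence version, so your conclusion suffices. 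Separately, ``realising this diagonalisation by a corresponding change of integral generators \ldots produces a refined wedge decomposition'' needs a sentence of justification: lift the diagonalising matrix from $GL(\mathbb{F}_p)$ to $GL(\mathbb{Z}/p^r)$ (possible since reduction on $GL$ is surjective), and then realise the resulting automorphism of $H_{2n-1}(M_{2n};\mathbb{Z})$ by a self-homotopy-equivalence of the wedge of odd-primary Moore spaces; this is standard in the stable range $n\geq 2$ but is the one genuine homotopy-theoretic input and should be cited rather than taken for granted.
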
 

As in the Introduction, let $m$ be the least common multiple of 
$\{p_{1}^{r_{1}},\ldots,p_{\ell}^{r_{\ell}}\}$ and let $m=\bar{p}_{1}^{\bar{r}_{1}}\cdots\bar{p}_{s}^{\bar{r}_{s}}$ 
be its prime decomposition. Notice that $\{\bar{p}_{1},\ldots,\bar{p}_{s}\}$ is the set of distinct 
primes in $\{p_{1},\ldots,p_{\ell}\}$  and each~$\bar{r}_{j}$ is the maximum power of $\bar{p}_{j}$ 
appearing in the list $\{p_{1}^{r_{1}},\ldots,p_{\ell}^{r_{\ell}}\}$. In general, if $a$ and $b$ 
are coprime then by~\cite[proof of Proposition 1.5]{N} there is a homotopy equivalence 
$P^{t}(ab)\simeq P^{t}(a)\vee P^{t}(b)$. In our case, since $\{\bar{p}_{1},\ldots,\bar{p}_{s}\}$ 
are distinct primes and $m=\bar{p}_{1}^{\bar{r}_{1}}\cdots\bar{p}_{s}^{\bar{r}_{s}}$, 
there is a homotopy equivalence 
\[P^{2n}(m)\simeq\bigvee_{j=1}^{s} P^{2n}(\bar{p}_{j}^{\bar{r}_{j}}).\]  
Therefore $M_{2n}$ can be rewritten as 
\begin{equation} 
  \label{M2n2} 
  M_{2n}\simeq P^{2n}(m)\vee\Sigma A  
\end{equation} 
where $\Sigma A$ is the wedge of the remaining Moore spaces in $M_{2n}$. 

Define $j'$ and $j$ by the composites 
\[j'\colon P^{2n}(m)\hookrightarrow\nameddright{P^{2n}(m)\vee\Sigma A}{\simeq}{M_{2n}}{}{M}\]  
\[j\colon \Sigma A\hookrightarrow\nameddright{P^{2n}(m)\vee\Sigma A}{\simeq}{M_{2n}}{}{M}.\] 
Define the space $V$ and the map $\frak{h}$ by the homotopy cofibration 
\[\nameddright{\Sigma A}{j}{M}{\frak{h}}{V}.\] 
Then $V$ is a three-cell complex, $V=P^{2n}(m)\cup e^{4n-1}$, and the inclusion 
of the $(4n-2)$-skeleton is given by the composite 
\[i\colon\nameddright{P^{2n}(m)}{j'}{M}{\frak{h}}{V}.\] 
Observe that Lemma~\ref{pkchoice} implies that $V$ is a Poincar\'{e} Duality complex 
since the power of each $\bar{p}_{j}$ appearing as a factor of 
$m=\bar{p}_{1}^{\bar{r}_{1}}\cdots\bar{p}_{s}^{\bar{r}_{s}}$ is maximal. 

Let 
\(F\colon\namedright{S^{4n-2}}{}{M_{2n}}\) 
be the attaching map for the top cell of $M$. Define $f$ by the composite 
\(f\colon\nameddright{S^{4n-2}}{F}{M_{2n}}{\frak{q}}{P^{2n}(m)}\) 
where $\frak{q}$ collapses $\Sigma A$ in $M_{2n}\simeq P^{2n}(m)\vee\Sigma A$ to a point. 
Observe that there is a homotopy pushout diagram 
\begin{equation} 
  \label{Fpo} 
  \diagram 
     & \Sigma A\rdouble\dto & \Sigma A\dto^{j} \\ 
     S^{4n-2}\rto^-{F}\ddouble & M_{2n}\rto\dto^{\frak{q}} & M\dto^{\frak{h}} \\ 
     S^{4n-2}\rto^-{f} & P^{2n}(m)\rto^-{i'} & V 
  \enddiagram 
\end{equation}  
that defines the map $i'$. By definition of $\frak{q}$ the composite 
\(P^{2n}(m)\hookrightarrow\nameddright{P^{2n}(m)\vee\Sigma A}{\simeq}{M_{2n}}{\frak{q}}{P^{2n}(m)}\) 
is the identity map. Therefore $i'$ is homotopic to the composite 
\(P^{2n}(m)\hookrightarrow\namedddright{P^{2n}(m)\vee\Sigma A}{\simeq}{M_{2n}}{}{M}{\frak{h}}{V}\), 
which, by definition of $j'$, is $\frak{h}\circ j'$. But $\frak{h}\circ j'$ is the definition of $i$, 
so we have $i'=i$. Therefore $f$ is the attaching map for the top cell of $V$, and $F$ is a 
lift of $f$ through $\frak{q}$. 

We wish to show that $\Omega\frak{h}$ has a right homotopy inverse. Doing so will 
involve decomposing $\Omega V$ in a manner analogous to that for the special case 
when $m=p^{r}$ in Section~\ref{sec:rank1}. We first aim for the analogue of~(\ref{fgextend}). 

\begin{lemma} 
   \label{Fextend} 
   The map 
   \(\namedright{S^{4n-2}}{F}{M_{2n}}\) 
   extends to a map 
   \(G\colon\namedright{P^{4n-1}(m)}{}{M_{2n}}\). 
\end{lemma}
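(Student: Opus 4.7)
The plan is to reduce the extension problem to an exponent bound on $\pi_{4n-2}(M_{2n})$. Since $P^{4n-1}(m)$ is the mapping cone of the degree-$m$ self-map of $S^{4n-2}$, the map $F$ extends to $P^{4n-1}(m)$ if and only if the composite $S^{4n-2}\stackrel{m}{\longrightarrow}S^{4n-2}\stackrel{F}{\longrightarrow}M_{2n}$ is null homotopic; in the group $\pi_{4n-2}(M_{2n})$ this composite represents $m\cdot F$, so it suffices to prove $m\cdot F=0$.

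I would verify this one prime at a time. For a prime $q\notin\{\bar p_1,\ldots,\bar p_s\}$, every wedge summand $P^{2n}(p_k^{r_k})$ becomes contractible after $q$-localization (as each $p_k$ is invertible in $\mathbb{Z}_{(q)}$), so $M_{2n(q)}$ is contractible and $F_{(q)}=0$ automatically. For $q=\bar p_j$, the $q$-localization is $M_{2n(\bar p_j)}\simeq\bigvee_{k:\,p_k=\bar p_j}P^{2n}(\bar p_j^{r_k})$, a wedge of Moore spaces in which every exponent $\bar p_j^{r_k}$ divides $\bar p_j^{\bar r_j}$. It then suffices to show that $\bar p_j^{\bar r_j}$ annihilates $\pi_{4n-2}(M_{2n(\bar p_j)})$, because $\bar p_j^{\bar r_j}\mid m$.

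To obtain this exponent bound I would apply the Hilton--Milnor theorem. Writing each summand as the suspension $P^{2n}(\bar p_j^{r_k})=\Sigma P^{2n-1}(\bar p_j^{r_k})$, the theorem identifies $\pi_{4n-2}$ of the wedge with a direct sum $\bigoplus_\alpha\pi_{4n-2}(\Sigma w_\alpha)$ indexed by basic products of the desuspensions $P^{2n-1}(\bar p_j^{r_k})$. A connectivity count shows that basic products of length three or more are at least $(6n-6)$-connected, so their suspensions contribute nothing to $\pi_{4n-2}$ as soon as $n\ge 2$, leaving only the length-one and length-two terms. The length-one contributions $\pi_{4n-2}(P^{2n}(\bar p_j^{r_k}))$ are killed by $\bar p_j^{r_k}\mid\bar p_j^{\bar r_j}$ by the Barratt exponent theorem already invoked in Section~\ref{sec:rank1}, whose range condition $4n-2\le(2n-2)\bar p_j$ holds for $n\ge 2$ and odd $\bar p_j$. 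The length-two contributions $\Sigma(P^{2n-1}(\bar p_j^{r_k})\wedge P^{2n-1}(\bar p_j^{r_l}))$ are, by Neisendorfer's standard smash product decomposition for mod-$p^r$ Moore spaces at odd primes, wedges of higher-dimensional Moore spaces of exponent $\bar p_j^{\min(r_k,r_l)}\mid\bar p_j^{\bar r_j}$, to which Barratt's theorem again applies in the range of dimension $4n-2$.

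Summing these contributions yields $\bar p_j^{\bar r_j}\cdot F_{(\bar p_j)}=0$ for each $j$, hence $m\cdot F=0$ globally, and any null homotopy then produces the required extension $G\colon P^{4n-1}(m)\to M_{2n}$. The main obstacle is the bookkeeping inside the Hilton--Milnor decomposition: identifying exactly which smash products show up, checking that each one is a wedge of Moore spaces whose exponents divide $\bar p_j^{\bar r_j}$, and verifying that Barratt's range applies uniformly. These are standard inputs once the argument is organized one prime at a time, and are implicit in the methods of \cite{BW}.
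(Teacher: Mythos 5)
Your proposal is correct and uses essentially the same machinery the paper does: the Hilton--Milnor theorem to reduce to individual basic products, Barratt's exponent theorem for the length-one Moore space factors, and Neisendorfer's smash-product decomposition for the length-two factors, together with a connectivity count to dispose of longer products. The only organizational difference is that you localize prime-by-prime first (making cross-prime terms vanish because the relevant Moore spaces become contractible), then reassemble using that $\pi_{4n-2}(M_{2n})$ is a finite abelian group so a class is zero iff its $q$-primary components all vanish; the paper instead works integrally on the adjoint $\widetilde F\colon S^{4n-3}\to\Omega M_{2n}$ and uses that $P^{a}(p^{r})\wedge P^{b}(q^{s})$ is contractible for distinct primes $p,q$ to kill the cross terms directly. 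Both routes are valid, and the bookkeeping you identify as the main obstacle (which basic products contribute to $\pi_{4n-2}$ and what their exponents are) is exactly what the paper verifies.
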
 

\begin{proof} 
It is equivalent to show that the map $F$ has order~$m$, and showing this is 
equivalent to showing that the adjoint 
\(\widetilde{F}\colon\namedright{S^{4n-3}}{}{\Omega M_{2n}}\) 
of $F$ has order~$m$. 

Since $M_{2n}\simeq\bigvee_{k=1}^{\ell} P^{2n}(p_{k}^{r_{k}})$, by the Hilton-Milnor Theorem 
\[\Omega M_{2n}\simeq\prod_{k=1}^{\ell}\Omega P^{2n}(p_{k}^{r_{k}})\times 
     \prod_{j=1}^{\binom{\ell}{2}}\Omega(\Sigma P^{2n-1}(p_{j_{1}}^{r_{j_{1}}})\wedge P^{2n-1}(p_{j_{2}}^{r_{j_{2}}})) 
     \times N\] 
where $1\leq j_{1},j_{2}\leq\ell$, $j_{1}\neq j_{2}$, and $N$ is $(4n-2)$-connected. 
Thus $\widetilde{F}$ is a sum of maps of the form 
\(\widetilde{F}_{k}\colon\namedright{S^{4n-3}}{}{\Omega P^{2n}(p_{k}^{r_{k}})}\) 
and 
\(\widetilde{F}_{j}\colon\namedright{S^{4n-3}}{} 
     {\Omega(\Sigma P^{2n-1}(p_{j_{1}}^{r_{j_{1}}})\wedge P^{2n-1}(p_{j_{2}}^{r_{j_{2}}}))}\). 
As before, by~\cite{Bar} each map $\widetilde{F}_{k}$ 
has order at most $p_{k}^{r_{k}}$. As $p_{k}^{r_{k}}$ is a factor of $m$, we obtain a null 
homotopy for $\widetilde{F}_{k}\circ m$, for $1\leq k\leq\ell$. By~\cite[Corollary 6.6]{N}, if $p$ and $q$ 
are distinct primes then $P^{a}(p^{r})\wedge P^{b}(q^{s})$ is contractible, and if $r\leq s$ 
and $p^{r}\neq 2$, then $P^{a}(p^{r})\wedge P^{b}(p^{s})\simeq P^{a+b}(p^{r})\vee P^{a+b-1}(p^{r})$. 
Thus if $p_{j_{1}}\neq p_{j_{2}}$ then $\widetilde{F}_{j}$ is null homotopic, while if 
$p_{j_{1}}=p_{j_{2}}$ and we assume without loss of generality that $r_{j_{1}}\leq r_{j_{2}}$, 
then for dimensional reasons the Hilton-Milnor Theorem implies that $\widetilde{F}_{j}$ factors through 
\(\widehat{F}_{j}\colon\namedright{S^{4n-3}}{} 
      {\Omega P^{4n-1}(p_{j_{1}}^{r_{j_{1}}})\times\Omega P^{4n-2}(p_{j_{1}}^{r_{j_{1}}})}\). 
 For dimension and connectivity reasons, $\widehat{F}_{j}$ is trivial on the 
 $\Omega P^{4n-1}(p_{j_{1}}^{r_{j_{1}}})$ factor and is a multiple of the inclusion of the bottom 
 cell on the $\Omega P^{4n-2}(p_{j_{1}}^{r_{j_{1}}})$ factor. This inclusion has order~$p_{j_{1}}^{r_{j_{1}}}$, 
 so as $p_{j_{1}}^{r_{j_{1}}}$ is a factor of $m$, we obtain a null homotopy for 
$\widehat{F}_{j}\circ m$, and therefore one for $\widetilde{F}\circ m$. Hence 
$F\circ m$ is null homotopic. 
\end{proof} 

Lemma~\ref{Fextend} implies that there is a homotopy cofibration diagram 
\begin{equation} 
  \label{FGextend} 
  \diagram 
        S^{4n-2}\rto\ddouble & P^{4n-1}(m)\rto^-{q}\dto^{G} & S^{4n-1}\dto^{H} \\ 
        S^{4n-2}\rto^-{F} & M_{2n}\rto^-{I} & M 
  \enddiagram 
\end{equation} 
where $I$ is the skeletal inclusion, $q$ is the pinch map to the top cell, and $H$ is an induced 
map of cofibres. Combining this with~(\ref{Fpo}) gives an iterated homotopy pushout diagram 
\begin{equation} 
  \label{FGcombo} 
  \diagram 
        S^{4n-2}\rto\ddouble & P^{4n-1}(m)\rto^-{q}\dto^{G} & S^{4n-1}\dto^{H} \\ 
        S^{4n-2}\rto^-{F}\ddouble & M_{2n}\rto^-{I}\dto^{\mathfrak{q}} & M\dto^{\frak{h}} \\ 
        S^{4n-2}\rto^-{f} & P^{2n}(m)\rto^-{i} & V.  
  \enddiagram 
\end{equation} 

We now give a homotopy decomposition of $\Omega V$. By definition, 
$P^{2n}(m)\simeq\bigvee_{j=1}^{s} P^{2n}(\bar{p}_{j}^{\bar{r}_{j}})$. For $1\leq j\leq s$, 
define $S_{j}$ by the composite 
\[S_{j}\colon\nameddright{S^{2n-1}\{\bar{p}_{j}^{\bar{r}_{j}}\}}{s_{j}}{\Omega P^{2n}(\bar{p}_{j}^{\bar{r}_{j}})} 
      {\Omega i_{j}}{\Omega P^{2n}(m)}\] 
where $s_{j}$ is from~(\ref{curlydgrm}) and $i_{j}$ is the inclusion of the 
$j^{th}$-wedge summand. Define $S$ by the composite 
\[S\colon\llnameddright{\prod_{j=1}^{s} S^{2n-1}\{\bar{p}_{j}^{\bar{r}_{j}}\}}{\prod_{j=1}^{s} S_{j}} 
     {\prod_{j=1}^{s}\Omega P^{2n}(m)}{\mu}{\Omega P^{2n}(m)}\] 
and define $T$ by the composite 
\[T\colon\nameddright{\prod_{j=1}^{s} S^{2n-1}\{\bar{p}_{j}^{\bar{r}_{j}}\}}{S}{\Omega P^{2n}(m)} 
      {\Omega i}{\Omega V}.\] 
Finally, define $e$ by the composite 
\[e\colon\lllnameddright{\bigg(\prod_{j=1}^{s} S^{2n-1}\{\bar{p}_{j}^{\bar{r}_{j}}\}\bigg)\times\Omega S^{4n-1}} 
      {T\times\Omega(\mathfrak{h}\circ H)}{\Omega V\times\Omega V}{\mu}{\Omega V}.\]  

\begin{proposition} 
   \label{Vmdecomp} 
   If $n\geq 2$ then the map $e$ is a homotopy equivalence. 
\end{proposition}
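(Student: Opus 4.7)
Following the template of Proposition~\ref{loopVdecomp}, I would show that $e$ is a homotopy equivalence after localizing at every prime and rationally, and then conclude by Whitehead's theorem. The key point is that all the spaces and maps in the construction are natural in the prime-power data, so every localization reduces either to a trivial case or to the rank-one situation of Section~\ref{sec:rank1}.

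For a prime $q\notin\{\bar{p}_{1},\ldots,\bar{p}_{s}\}$, or rationally, every Moore space $P^{2n}(\bar{p}_{j}^{\bar{r}_{j}})$ is contractible, so $P^{2n}(m)$ is contractible and the bottom cofibration in~(\ref{FGcombo}) forces $V\simeq S^{4n-1}$ at $q$. Chasing the diagram shows that $\mathfrak{h}\circ H$ induces an equivalence between the top cells of $M$ and $V$, and so is a homotopy equivalence. Each fibre $S^{2n-1}\{\bar{p}_{j}^{\bar{r}_{j}}\}$ is likewise contractible in this localization, so the source of $e$ collapses to $\Omega S^{4n-1}$ and $e$ reduces to $\Omega(\mathfrak{h}\circ H)$, which is an equivalence.

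Next I would localize at each $\bar{p}_{j}$. Since the primes $\bar{p}_{1},\ldots,\bar{p}_{s}$ are distinct, every Moore space summand $P^{2n}(\bar{p}_{k}^{\bar{r}_{k}})$ with $k\neq j$ is contractible at $\bar{p}_{j}$, so the inclusion $i_{j}\colon P^{2n}(\bar{p}_{j}^{\bar{r}_{j}})\to P^{2n}(m)$ becomes a $\bar{p}_{j}$-local homotopy equivalence. Consequently $V$ localized at $\bar{p}_{j}$ is precisely the three-cell Poincar\'{e} Duality complex $P^{2n}(\bar{p}_{j}^{\bar{r}_{j}})\cup e^{4n-1}$ studied in Section~\ref{sec:rank1}, and the bottom block of~(\ref{FGcombo}) localized at $\bar{p}_{j}$ becomes the cofibration diagram~(\ref{fgextend}) with $\mathfrak{h}\circ H$ playing the role of the map $h$. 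Similarly, $S^{2n-1}\{\bar{p}_{k}^{\bar{r}_{k}}\}$ is contractible at $\bar{p}_{j}$ for $k\neq j$, so the source of $e$ reduces to $S^{2n-1}\{\bar{p}_{j}^{\bar{r}_{j}}\}\times\Omega S^{4n-1}$. Chasing through the definitions, the naturality of the fibration~(\ref{curlydgrm}) together with the fact that $\Omega i_{j}$ is a $\bar{p}_{j}$-local equivalence identifies $S_{j}$, and hence $T$, at $\bar{p}_{j}$ with the rank-one map $t$ of Section~\ref{sec:rank1}; the loop multiplication assembling the full $S$ collapses since all other factors are trivial. Therefore at $\bar{p}_{j}$ the map $e$ becomes the rank-one map $e$ of Proposition~\ref{loopVdecomp}, which is a $\bar{p}_{j}$-local homotopy equivalence.

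Assembling the localizations, $e$ induces an isomorphism on $\pi_{*}$ and so is an integral homotopy equivalence. The main technical point is the bookkeeping in the previous paragraph: one has to verify that after $\bar{p}_{j}$-localization the diagram defining $T$ matches, up to the equivalence $\Omega i_{j}$, the rank-one map $t$, and that $\Omega(\mathfrak{h}\circ H)$ matches $\Omega h$. This amounts to a naturality statement for the Moore space splitting $P^{2n}(m)\simeq\bigvee_{j=1}^{s}P^{2n}(\bar{p}_{j}^{\bar{r}_{j}})$ and for the cofibration diagram~(\ref{FGcombo}), both of which are transparent once localization is applied.
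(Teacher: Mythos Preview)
Your proposal is correct and matches the paper's proof essentially verbatim: the paper also localizes at each prime and rationally, observes that at a prime $p=\bar{p}_{j}$ the diagram~(\ref{FGcombo}) collapses to a $p$-local instance of~(\ref{fgextend}) with $g=\mathfrak{q}\circ G$ and $h=\mathfrak{h}\circ H$, invokes Lemma~\ref{oddph} and Proposition~\ref{loopVdecomp}, and treats primes outside $\{\bar{p}_{1},\ldots,\bar{p}_{s}\}$ and the rationals by the same contractibility argument you give. The only cosmetic imprecision is your phrase ``the bottom block of~(\ref{FGcombo})'': it is the outer rectangle (top row and bottom row) that becomes~(\ref{fgextend}), not the bottom square alone.
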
 

\begin{proof} 
We will show that after localizing at each prime $p$ and rationally, $e$ is a homotopy 
equivalence. This would imply that $e$ is an integral homotopy equivalence. 

Localize at a prime $p$ where $p=\bar{p}_{j}$ for some $1\leq j\leq s$. Let $r=\bar{r}_{j}$. 
If $q$ is a prime distinct from~$p$ then the Moore space $P^{a}(q^{s})$ 
is contractible for $a\geq 2$. Therefore, as $P^{a}(m)\simeq\bigvee_{j=1}^{s} P^{a}(\bar{p}_{j}^{\bar{r}_{j}})$  
and the primes $\bar{p}_{1},\ldots,\bar{p}_{s}$ are distinct, there is a $p$-local homotopy equivalence. 
\[P^{a}(m)\simeq P^{a}(p^{r}).\] 
Applying this to~(\ref{FGcombo}) we obtain a $p$-local homotopy cofibration diagram 
\[\diagram 
        S^{4n-2}\rto\ddouble & P^{4n-1}(p^{r})\rto^-{q}\dto^{g} & S^{4n-1}\dto^{h} \\ 
        S^{4n-2}\rto^-{f} & P^{2n}(p^{r})\rto^-{i} & V 
  \enddiagram\] 
where $g=\mathfrak{q}\circ G$ and $h=\mathfrak{h}\circ H$. This is a $p$-local version 
of~(\ref{fgextend}) so we may argue as in Lemma~\ref{oddph} and Proposition~\ref{loopVdecomp} 
to show that the composite 
\[S^{2n-1}\{p^{r}\}\times\Omega S^{4n-1}\hookrightarrow 
     \namedright{\bigg(\prod_{j=1}^{s} S^{2n-1}\{\bar{p}_{j}^{\bar{r}_{j}}\}\bigg)\times\Omega S^{4n-1}} 
      {e}{\Omega V}\] 
is a $p$-local homotopy equivalence. Notice that the spaces $S^{2n-1}\{\bar{p}_{j}^{\bar{r}_{j}}\}$ 
are contractible if $\bar{p}_{j}\neq p$, so in fact we have shown that $e$ is a $p$-local homotopy 
equivalence. 

Next, localize at a prime $p\notin\{\bar{p}_{1},\ldots,\bar{p}_{s}\}$. Then $P^{a}(m)$ 
for $a\geq 2$ and the Moore space wedge summands of $M_{2n}$ are all contractible. 
Therefore in~(\ref{FGcombo}) both $M$ and $V$ are homotopy equivalent to $S^{4n-1}$ and 
the maps $H$ and $\mathfrak{h}$ are both homotopy equivalences. On the other hand, 
the spaces $S^{2n-1}\{\bar{p}_{j}^{\bar{r}_{j}}\}$ are also contractible so $e$ reduces to 
$\Omega(\mathfrak{h}\circ H)$, which we have just seen is a homotopy equivalence. 
The same argument shows that $e$ is also a rational homotopy equivalence. 
\end{proof} 

Proposition~\ref{Vmdecomp} will be used to show that the map 
\(\namedright{\Omega M}{\Omega\frak{h}}{\Omega V}\) 
has a right homotopy inverse. Thinking ahead, this is drawn from a slightly stronger statement. 

\begin{lemma} 
   \label{preMhinv} 
   The composite 
   \(\llnameddright{\Omega(P^{2n}(m)\vee S^{4n-1})}{\Omega(j'+H)}{\Omega M}{\Omega\frak{h}}{\Omega V}\) 
   has a right homotopy inverse. 
\end{lemma}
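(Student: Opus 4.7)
The plan is to apply Lemma~\ref{inverselemma} directly, using the homotopy equivalence for $\Omega V$ produced in Proposition~\ref{Vmdecomp}. Write $h=\mathfrak{h}\circ(j'+H)$, so the composite in question is exactly $\Omega h$. By Proposition~\ref{Vmdecomp} the map
\[e\colon\bigg(\prod_{j=1}^{s}S^{2n-1}\{\bar{p}_{j}^{\bar{r}_{j}}\}\bigg)\times\Omega S^{4n-1}\longrightarrow\Omega V\]
is a homotopy equivalence and has the shape required by Lemma~\ref{inverselemma}, namely it is the loop multiplication composed with $T\times\Omega(\mathfrak{h}\circ H)$. Thus it suffices to lift each of the two factors $T$ and $\Omega(\mathfrak{h}\circ H)$ through $\Omega h$.

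For the $\Omega S^{4n-1}$ factor, let
\(i_{2}\colon S^{4n-1}\to P^{2n}(m)\vee S^{4n-1}\)
be the inclusion of the right summand. By construction $(j'+H)\circ i_{2}=H$, so
\(h\circ i_{2}=\mathfrak{h}\circ H\), and therefore $\Omega i_{2}$ is a lift of $\Omega(\mathfrak{h}\circ H)$ through $\Omega h$.

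For the other factor, recall from the definition that $T=\Omega i\circ S$ and that $i=\mathfrak{h}\circ j'$. Let
\(i_{1}\colon P^{2n}(m)\to P^{2n}(m)\vee S^{4n-1}\)
be the inclusion of the left summand. Since $(j'+H)\circ i_{1}=j'$, we have $h\circ i_{1}=\mathfrak{h}\circ j'=i$, hence
\[\Omega h\circ(\Omega i_{1}\circ S)=\Omega i\circ S=T,\]
so $\Omega i_{1}\circ S$ is a lift of $T$ through $\Omega h$. Applying Lemma~\ref{inverselemma} with these two lifts produces a right homotopy inverse of $\Omega h=\Omega\mathfrak{h}\circ\Omega(j'+H)$.

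There is no real obstacle: the argument is purely formal once the decomposition in Proposition~\ref{Vmdecomp} is in hand, because both summands of the pinch map $j'+H$ were chosen precisely to hit the building blocks $\Omega i$ and $\Omega H$ of the equivalence $e$. The work that was non-trivial has already been absorbed into Proposition~\ref{Vmdecomp}.
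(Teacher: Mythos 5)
Your proof is correct and is essentially the same as the paper's: both invoke Lemma~\ref{inverselemma} with the equivalence from Proposition~\ref{Vmdecomp}, and both lift $T$ and $\Omega(\mathfrak{h}\circ H)$ through $\Omega\mathfrak{h}\circ\Omega(j'+H)$ by precomposing with the wedge-summand inclusions, using $T=\Omega i\circ S$ and $i=\mathfrak{h}\circ j'$. The only difference is cosmetic (you name the composite $h$ and spell out $i_1,i_2$ explicitly).
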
 

\begin{proof} 
By Proposition~\ref{Vmdecomp} there is a homotopy equivalence 
\[\lllnameddright{\bigg(\prod_{j=1}^{s} S^{2n-1}\{\bar{p}_{j}^{\bar{r}_{j}}\}\bigg)\times\Omega S^{4n-1}} 
      {T\times\Omega(\mathfrak{h}\circ H)}{\Omega V\times\Omega V}{\mu}{\Omega V}.\]  
By Lemma~\ref{inverselemma}, to show that $\Omega\mathfrak{h}\circ\Omega (j'+H)$ has 
a right homotopy inverse it suffices to show that both $T$ and $\Omega(\mathfrak{h}\circ H)$ 
lift through $\Omega\mathfrak{h}\circ\Omega(j'+H)$. 

By definition, $T$ is the composite 
\(\nameddright{\prod_{j=1}^{s} S^{2n-1}\{\bar{p}_{j}^{\bar{r}_{j}}\}}{S}{\Omega P^{2n}(m)} 
      {\Omega i}{\Omega V}\) 
and, by definition, $i$ is the composite 
\(\nameddright{P^{2n}(q)}{j'}{M}{\mathfrak{h}}{V}\).  
Thus $T=\Omega\mathfrak{h}\circ\Omega j'\circ S$. This implies that~$T$ 
lifts through $\Omega\mathfrak{h}\circ\Omega j'$ and hence through 
$\Omega\mathfrak{h}\circ\Omega(j'+H)$. Clearly, 
$\Omega(\mathfrak{h}\circ H)\simeq\Omega\mathfrak{h}\circ\Omega H$ lifts through 
$\Omega\mathfrak{h}\circ\Omega(j'+H)$. 
\end{proof} 

\begin{corollary} 
   \label{Mhinv} 
   The map 
   \(\namedright{\Omega M}{\Omega\frak{h}}{\Omega V}\) 
   has a right homotopy inverse.~$\qqed$  
\end{corollary}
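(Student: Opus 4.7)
The plan is to observe that Corollary~\ref{Mhinv} is essentially an immediate consequence of Lemma~\ref{preMhinv}, obtained by reassociating the composition.

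By Lemma~\ref{preMhinv}, the composite
\[\llnameddright{\Omega(P^{2n}(m)\vee S^{4n-1})}{\Omega(j'+H)}{\Omega M}{\Omega\mathfrak{h}}{\Omega V}\]
admits a right homotopy inverse, say $\sigma\colon\namedright{\Omega V}{}{\Omega(P^{2n}(m)\vee S^{4n-1})}$, so that $\Omega\mathfrak{h}\circ\Omega(j'+H)\circ\sigma\simeq\mathrm{id}_{\Omega V}$. The plan is then to define the candidate right homotopy inverse for $\Omega\mathfrak{h}$ to be the composite
\[\namedright{\Omega V}{\sigma}{\Omega(P^{2n}(m)\vee S^{4n-1})}\stackrel{\Omega(j'+H)}{\llarrow}\Omega M.\]
Composing this with $\Omega\mathfrak{h}$ recovers the composite in Lemma~\ref{preMhinv} applied after $\sigma$, which is homotopic to the identity by choice of $\sigma$. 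Hence $\Omega\mathfrak{h}$ has a right homotopy inverse.

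There is no real obstacle here; the only subtlety is organizational, namely recognizing that Lemma~\ref{preMhinv} was phrased in terms of a factored composite precisely so that this bootstrap would be free. Indeed, this is why Lemma~\ref{preMhinv} was noted to be "slightly stronger" than what is strictly needed for Corollary~\ref{Mhinv}; the corollary will be reused later, but the stronger lemma itself will also be important for the naturality arguments feeding into Theorem~\ref{introI+Hfib}.
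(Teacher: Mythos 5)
Your proof is correct and matches the paper's intent exactly: the paper states the corollary with an immediate $\Box$, precisely because, as you observe, if $\sigma$ is a right homotopy inverse for $\Omega\mathfrak{h}\circ\Omega(j'+H)$, then $\Omega(j'+H)\circ\sigma$ is a right homotopy inverse for $\Omega\mathfrak{h}$ by reassociation. Nothing more is needed.
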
 

We can now prove Theorem~\ref{introloopMdecomp}. 

\begin{proof}[Proof of Theorem~\ref{introloopMdecomp}] 
From the homotopy cofibration 
\(\nameddright{\Sigma A}{j}{M}{\frak{h}}{V}\) 
and the right homotopy inverse of $\Omega\frak{h}$ in Corollary~\ref{Mhinv}, 
parts~(a) and~(b) follow immediately from Theorem~\ref{GTcofib}. Part~(c) is 
Proposition~\ref{Vmdecomp}.  
\end{proof} 

\begin{remark} 
\label{Wremark} 
By Theorem~\ref{introloopMdecomp}, 
$\Omega M\simeq\Omega V\times\Omega((\Sigma\Omega V\wedge A)\vee\Sigma A)$. 
We claim that $(\Sigma\Omega V\wedge A)\vee\Sigma A$ is homotopy equivalent to 
a wedge $W$ of spheres and odd primary Moore spaces. If so then we may more simply write 
$\Omega M\simeq\Omega V\times\Omega W$. To prove the claim, fist consider 
\[\Sigma\Omega V\simeq 
      \Sigma\bigg(\big(\prod_{j=1}^{s} S^{2n-1}\{\bar{p}_{j}^{\bar{r}_{j}}\}\big)\times\Omega S^{4n-1}\bigg).\] 
In general, if $B$ and $C$ are path-connected spaces then 
$\Sigma(B\times C)\simeq\Sigma B\vee\Sigma C\vee(\Sigma B\wedge C)$;  
by~\cite{J} the space $\Sigma\Omega S^{t+1}$ is homotopy equivalent to a wedge 
of suspended spheres; by~\cite{CMN} the space $\Sigma S^{2n-1}\{p^{r}\}$ is homotopy 
equivalent to a wedge of mod-$p^{r}$ Moore spaces; and by~\cite[Corollary 6.6]{N} there is a 
homotopy equivalence $P^{a}(p^{r})\wedge P^{b}(p^{s})\simeq P^{a+b}(p^{s})\vee P^{a+b-1}(p^{s})$ 
if $s\leq r$ and $p$ is odd while $P^{a}(p^{r})\wedge P^{b}(q^{s})$ is contractible if $p$ and $q$ 
are distinct primes. Collectively, these statements imply that $\Sigma\Omega V$ is homotopy 
equivalent to a wedge of spheres and odd primary Moore spaces. Since~$A$ is defined as a 
wedge of odd primary Moore spaces, we therefore also obtain that 
$(\Sigma\Omega V\wedge A)\vee\Sigma A$ is homotopy equivalent to a wedge of 
spheres and odd primary Moore spaces. 
\end{remark} 

Next,  we consider the analogue of Proposition~\ref{i+hfib}. This will be done 
in two steps, first with respect to 
\(\namedright{P^{2n}(m)}{i}{V}\) 
and then with respect to 
\(\namedright{M_{2n}}{I}{M}\). 
First, the homotopy pushout 
\[\diagram 
      P^{4n-1}(m)\rto^-{q}\dto^{\mathfrak{q}\circ G} & S^{4n-1}\dto^{\mathfrak{h}\circ H} \\ 
      P^{2n}(m)\rto^-{i} & V 
  \enddiagram\] 
in~(\ref{FGcombo}) implies that there is a homotopy cofibration 
\[\llnameddright{P^{4n-1}(m)}{(\mathfrak{q}\circ G)-q}{P^{2n}(m)\vee S^{4n-1}}{i+(\mathfrak{h}\circ H)}{V}.\] 

\begin{lemma} 
   \label{i+frakhinv} 
   The map 
   \(\lllnamedright{\Omega(P^{2n}(m)\vee S^{4n-1})}{\Omega(i+(\mathfrak{h}\circ H))}{\Omega V}\) 
   has a right homotopy inverse. 
\end{lemma}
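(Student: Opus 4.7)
The plan is to mirror the argument already used for Lemma~\ref{i+hinv}, substituting the multi-prime decomposition of $\Omega V$ from Proposition~\ref{Vmdecomp} for the single-prime decomposition in Proposition~\ref{loopVdecomp}. Proposition~\ref{Vmdecomp} gives the homotopy equivalence
\[e\colon\bigg(\prod_{j=1}^{s} S^{2n-1}\{\bar{p}_{j}^{\bar{r}_{j}}\}\bigg)\times\Omega S^{4n-1}\stackrel{\simeq}{\longrightarrow}\Omega V,\]
where $e=\mu\circ(T\times\Omega(\mathfrak{h}\circ H))$. So $\Omega V$ is realised multiplicatively as a product of two maps. Lemma~\ref{inverselemma} then reduces the task to exhibiting lifts of $T$ and of $\Omega(\mathfrak{h}\circ H)$ through the map $\Omega(i+(\mathfrak{h}\circ H))$.

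The lifts can be read off directly from the definitions. Let $i_{1}\colon P^{2n}(m)\hookrightarrow P^{2n}(m)\vee S^{4n-1}$ and $i_{2}\colon S^{4n-1}\hookrightarrow P^{2n}(m)\vee S^{4n-1}$ be the wedge-summand inclusions. By the definition of the sum of maps, $(i+(\mathfrak{h}\circ H))\circ i_{1}=i$ and $(i+(\mathfrak{h}\circ H))\circ i_{2}=\mathfrak{h}\circ H$. Since $T$ is by definition the composite $\Omega i\circ S$, the map $\Omega i_{1}\circ S$ lifts $T$, while $\Omega i_{2}$ lifts $\Omega(\mathfrak{h}\circ H)$. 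Feeding these two lifts into Lemma~\ref{inverselemma} then produces a right homotopy inverse for $\Omega(i+(\mathfrak{h}\circ H))$, completing the proof.

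There is essentially no serious obstacle: the hard work was carried out in Proposition~\ref{Vmdecomp} (the product decomposition of $\Omega V$) and in setting up the general criterion of Lemma~\ref{inverselemma}. The only thing to check carefully is that $T$ was built so that it already factors through $\Omega P^{2n}(m)$, which ensures that a lift through the $P^{2n}(m)$-summand of $P^{2n}(m)\vee S^{4n-1}$ is automatic; the analogous factorisation of $\Omega(\mathfrak{h}\circ H)$ through the $S^{4n-1}$-summand is equally immediate. This is exactly the structural parallel with the rank-one case in Lemma~\ref{i+hinv}, so no new homotopical input is required beyond Proposition~\ref{Vmdecomp}.
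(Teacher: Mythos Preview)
Your argument is correct and is essentially the paper's own approach: the paper deduces Lemma~\ref{i+frakhinv} in one line from Lemma~\ref{preMhinv} via the identity $i=\mathfrak{h}\circ j'$, and the proof of Lemma~\ref{preMhinv} is exactly the argument you give---use the homotopy equivalence $e=\mu\circ(T\times\Omega(\mathfrak{h}\circ H))$ from Proposition~\ref{Vmdecomp} and apply Lemma~\ref{inverselemma} after observing that $T$ factors through $\Omega P^{2n}(m)$ and $\Omega(\mathfrak{h}\circ H)$ through $\Omega S^{4n-1}$. You have simply unpacked that intermediate lemma in place.
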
 

\begin{proof} 
This follows immediately from Lemma~\ref{preMhinv} since $i=\mathfrak{h}\circ j'$. 
\end{proof} 

Second, the homotopy pushout in~(\ref{FGextend}) implies that there is a homotopy cofibration 
\[\llnameddright{P^{4n-1}(m)}{G-q}{M_{2n}\vee S^{4n-1}}{I+H}{M}.\] 

\begin{lemma} 
   \label{I+Hinv} 
   The map 
   \(\llnamedright{\Omega(M_{2n}\vee S^{4n-1})}{\Omega(I+H)}{\Omega M}\) 
   has a right homotopy inverse. 
\end{lemma}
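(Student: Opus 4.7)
The plan is to deduce Lemma~\ref{I+Hinv} from Lemma~\ref{inverselemma} by exhibiting a homotopy equivalence of the form $\mu\circ(f\times g)$ for $\Omega M$ in which both $f$ and $g$ lift through $\Omega(I+H)$.

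Such an equivalence will come from Theorem~\ref{GTcofib} applied to the homotopy cofibration $\Sigma A\xrightarrow{j} M\xrightarrow{\mathfrak{h}} V$. With $s\colon\Omega V\to\Omega M$ a right homotopy inverse of $\Omega\mathfrak{h}$ and $\gamma=ev\circ\Sigma s$, Theorem~\ref{GTcofib} produces the homotopy fibration $W\xrightarrow{\phi} M\xrightarrow{\mathfrak{h}} V$, where $W=(\Sigma\Omega V\wedge A)\vee\Sigma A$ and $\phi=[\gamma,j]+j$, together with a homotopy equivalence
\[\Omega V\times\Omega W\xrightarrow{s\times\Omega\phi}\Omega M\times\Omega M\xrightarrow{\mu}\Omega M.\]
It therefore suffices to choose $s$ so that both $s$ and $\Omega\phi$ lift through $\Omega(I+H)$.

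For $s$, I would invoke Lemma~\ref{preMhinv} to pick a right homotopy inverse $r\colon\Omega V\to\Omega(P^{2n}(m)\vee S^{4n-1})$ of $\Omega\mathfrak{h}\circ\Omega(j'+H)$ and set $s=\Omega(j'+H)\circ r$; this is a right homotopy inverse of $\Omega\mathfrak{h}$. Since $M_{2n}\simeq P^{2n}(m)\vee\Sigma A$ and $j'$ factors as $P^{2n}(m)\hookrightarrow M_{2n}\xrightarrow{I} M$, there is a map $\iota\colon P^{2n}(m)\vee S^{4n-1}\to M_{2n}\vee S^{4n-1}$, namely the first wedge-summand inclusion on $P^{2n}(m)$ and the identity on $S^{4n-1}$, satisfying $j'+H=(I+H)\circ\iota$. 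Hence $s=\Omega(I+H)\circ\Omega\iota\circ r$ already lifts through $\Omega(I+H)$. For $\Omega\phi$, I would factor $\phi$ itself through $I+H$: first, $j=(I+H)\circ\tilde{j}$ where $\tilde{j}\colon\Sigma A\hookrightarrow M_{2n}\hookrightarrow M_{2n}\vee S^{4n-1}$; second, by naturality of the evaluation map, $\gamma=(I+H)\circ\tilde\gamma$ with $\tilde\gamma=ev\circ\Sigma(\Omega\iota\circ r)$; third, by naturality of the Whitehead product, $[\gamma,j]=(I+H)\circ[\tilde\gamma,\tilde{j}]$. Combining these gives $\phi=(I+H)\circ([\tilde\gamma,\tilde{j}]+\tilde{j})$, so $\Omega\phi$ lifts through $\Omega(I+H)$ as well, and Lemma~\ref{inverselemma} produces the desired right homotopy inverse.

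The principal obstacle is that the cofibre map $\phi=[\gamma,j]+j$ produced by Theorem~\ref{GTcofib} depends on the choice of right inverse $s$, through $\gamma=ev\circ\Sigma s$, so an arbitrary right inverse of $\Omega\mathfrak{h}$ need not make $\phi$ factor through $I+H$. Choosing $s$ to come from Lemma~\ref{preMhinv} is what forces the factorization of $s$ through $\Omega(I+H)$; the factorization of $\phi$ itself through $I+H$ then follows automatically by naturality of the evaluation map and of the Whitehead product.
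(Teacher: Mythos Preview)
Your proof is correct and takes a more direct route than the paper's. The paper applies Theorem~\ref{GTcofib} \emph{twice}---once to $\Sigma A\xrightarrow{j} M\xrightarrow{\mathfrak{h}} V$ and once to $\Sigma A\vee P^{4n-1}(m)\xrightarrow{a+(G-q)} M_{2n}\vee S^{4n-1}\xrightarrow{\overline{\mathfrak{h}}} V$ with $\overline{\mathfrak{h}}=\mathfrak{h}\circ(I+H)$---then uses the naturality in Remark~\ref{GTcofibnat} to produce a map of homotopy fibrations over $V$ and invokes Lemma~\ref{splittinglemma} (together with Lemma~\ref{i+frakhinv}) to conclude. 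You instead apply Theorem~\ref{GTcofib} only once, to the first cofibration, and observe that if $s$ is chosen via Lemma~\ref{preMhinv} then both $s$ and the fibre map $\phi=[\gamma,j]+j$ factor through $I+H$ by naturality of $ev$ and of Whitehead products; Lemma~\ref{inverselemma} then finishes immediately. This is more economical: it bypasses Remark~\ref{GTcofibnat}, Lemma~\ref{splittinglemma}, and Lemma~\ref{i+frakhinv} entirely. The one implicit step you use is that the splitting equivalence in Theorem~\ref{GTcofib} is given explicitly by $\mu\circ(s\times\Omega\phi)$; although the theorem's statement does not say this, it is automatic for any homotopy fibration $F\to Y\to Z$ in which $\Omega h$ admits a section (a five-lemma argument on the looped fibration), so no gap arises. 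The paper's approach has the side benefit of producing the explicit fibration diagram~(\ref{Vcompare}) relating the fibres of $\overline{\mathfrak{h}}$ and $\mathfrak{h}$, which carries more information than is needed for the bare statement of the lemma.
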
 

\begin{proof} 
The plan is to use the right homotopy inverse for $\Omega(i+(\mathfrak{h}\circ H))$ in 
Lemma~\ref{i+frakhinv} and the naturality of Remark~\ref{GTcofibnat}. This will be done in steps. 
\medskip 

\noindent 
\textit{Step 1}. 
By~(\ref{M2n2}), $M_{2n}\simeq\Sigma A\vee P^{2n}(m)$. Let 
\(\namedright{\Sigma A}{a}{M_{2n}}\) 
be the inclusion of the wedge summand and recall that the composite 
\(\nameddright{\Sigma A}{a}{M_{2n}}{I}{M}\) 
is the definition of the map $j$ appearing in~(\ref{Fpo}), whose cofibre is the map 
\(\namedright{M}{\mathfrak{h}}{V}\). 
From this and the homotopy cofibration 
\(\llnameddright{P^{4n-1}(m)}{G-q}{M_{2n}\vee S^{4n-1}}{I+H}{M}\) 
we obtain a homotopy pushout diagram 
\[\diagram 
      P^{4n-1}(m)\rrdouble\dto^{i_{2}} & & P^{4n-1}(m)\dto^{G-q} & \\ 
      \Sigma A\vee P^{4n-1}(m)\rrto^-{a+(G-q)}\dto^{p_{1}} 
           & & M_{2n}\vee S^{4n-1}\rto^-{\overline{\mathfrak{h}}}\dto^{I+H} & V\ddouble \\ 
      \Sigma A\rrto^{j} & & M\rto^-{\mathfrak{h}} & V 
  \enddiagram\] 
where $i_{2}$ is the inclusion of the second wedge summand, $p_{1}$ is the pinch map 
onto the first wedge summand, and $\overline{\mathfrak{h}}$ is defined as $\mathfrak{h}\circ(I+H)$. 
\medskip 

\noindent 
\textit{Step 2}. By Lemma~\ref{preMhinv}, $\Omega\overline{\mathfrak{h}}$ has a 
right homotopy inverse 
\(s\colon\namedright{\Omega V}{}{\Omega(M_{2n}\vee S^{4n-1})}\). 
Let $s'$ be the composite 
\(s'\colon\llnameddright{\Omega V}{s}{\Omega(M_{2n}\vee S^{4n-1})}{\Omega(I+H)}{\Omega M}\). 
Then $s'$ is a right homotopy inverse for $\Omega\mathfrak{h}$ and there is a homotopy 
commutative diagram 
\begin{equation} 
  \label{ss'} 
  \diagram 
     \Omega V\rto^-{s}\ddouble & \Omega(M_{2n}\vee S^{4n-1})\dto^{\Omega(I+H)} \\ 
     \Omega V\rto^-{s'} & \Omega M. 
  \enddiagram 
\end{equation} 

\noindent 
\textit{Step 3}. 
The homotopy cofibration 
\(\nameddright{\Sigma A}{j}{M}{\mathfrak{h}}{V}\) 
and the existence of a right homotopy inverse~$s'$ for $\Omega\mathfrak{h}$ led to the 
identification of the homotopy fibre of $\mathfrak{h}$ as $(\Sigma\Omega V\wedge A)\vee\Sigma A$ 
via Theorem~\ref{GTcofib}. Similarly, the homotopy cofibration 
\(\llnameddright{\Sigma A\vee P^{4n-1}(m)}{a+(G-q)}{M_{2n}\vee S^{4n-1}}{\bar{\mathfrak{h}}}{V}\) 
and the existence of a right homotopy inverse~$s$ for $\Omega\overline{\mathfrak{h}}$ 
lets us use Theorem~\ref{GTcofib} to identify the homotopy fibre of $\overline{\mathfrak{h}}$ as 
$(\Sigma\Omega V\wedge(A\vee P^{4n-2}(m)))\vee(\Sigma A\vee P^{4n-1}(m))$. 
The compatibility of the $s$ and $s'$ in~(\ref{ss'}) lets us apply the naturality property 
in Remark~\ref{GTcofibnat} to obtain a homotopy fibration diagram 
\begin{equation} 
  \label{Vcompare} 
  \diagram 
       (\Sigma\Omega V\wedge(A\vee P^{4n-2}(m)))\vee(\Sigma A\vee P^{4n-1}(m)) 
            \rto\dto^{(\Sigma 1\wedge p_{1})\vee\Sigma p_{1}} 
          & M_{2n}\vee S^{4n-1}\rto^-{\overline{\mathfrak{h}}}\dto^{I+H} & V\ddouble \\  
       (\Sigma\Omega V\wedge A)\vee\Sigma A\rto  & M\rto^-{\mathfrak{h}} & V. 
  \enddiagram 
\end{equation} 

\noindent 
\textit{Step 4}. 
Finally, observe that the map $(\Sigma 1\wedge p_{1})\vee\Sigma p_{1}$ has a right 
homotopy inverse, and clearly the identity map on $V$ does as well. 
Since $\Omega(i+(\mathfrak{h}\circ H))$ has a right homotopy inverse by Lemma~\ref{i+frakhinv}
and it
factors as
 \[
 \Omega(i+(\mathfrak{h}\circ H)): \lllnameddright{\Omega(P^{2n}(m)\vee S^{4n-1})}{}{\Omega(M_{2n}\vee S^{4n-1})}{\Omega \overline{\mathfrak{h}}}{\Omega V},
 \]
$\Omega\overline{\mathfrak{h}}$ also has a right homotopy inverse.
Therefore,
Lemma~\ref{splittinglemma} implies that $\Omega(I+H)$ has a right homotopy inverse. 
\end{proof} 

Now we can prove Theorem~\ref{introI+Hfib}. 

\begin{proof}[Proof of Theorem~\ref{introI+Hfib}]  
From the homotopy cofibration 
\(\nameddright{P^{4n-1}(m)}{\mathfrak{G}}{M_{2n}\vee S^{4n-1}}{I+H}{M}\), 
where $\mathfrak{G}=G-q$, and the right homotopy inverse for $\Omega(I+H)$ in 
Lemma~\ref{I+Hinv}, the assertions follow immediately from Theorem~\ref{GTcofib}. 
\end{proof}

\section{An extension to some $2$-torsion cases I} 
\label{sec:general} 

In this section we consider a partial extension for parts~(a) and~(b) of Theorem~\ref{introloopMdecomp} 
to cases involving $2$-torsion. A full extension may not be possible due to issues involving Poincar\'{e} 
Duality as indicated by the lack of a $2$-primary analogue of Lemma~\ref{pkchoice}. 
Let $M$ be a $(2n-2)$-connected $(4n-1)$-dimensional Poincar\'{e} Duality 
complex such that $n\geq 2$ and 
\[
H^{2n}(M;\mathbb{Z})\cong\bigoplus_{k=1}^{\ell}\mathbb{Z}/p_{k}^{r_{k}}\mathbb{Z}\oplus\bigoplus_{s=1}^{t}\mathbb{Z}/2^{r_s}\mathbb{Z}
\]  
where each $p_{k}$ is an odd prime and $\ell\geq 1$. Then the $2n$-skeleton $M_{2n}$ 
of $M$ is homotopy equivalent to a wedge of Moore spaces  
\[
M_{2n}\simeq\bigvee_{k=1}^{\ell} P^{2n}(p_{k}^{r_{k}})\vee \bigvee_{s=1}^{t} P^{2n}(2^{r_{s}}).
\] 
Note the absence of mod-$2$ Moore spaces: this has to do with the smash product of 
two mod-$2$ Moore spaces as described in Remark~\ref{Fextend2remark}. 

As in the Introduction and Section \ref{sec:rankm}, let $m$ be the least common multiple of 
$\{p_{1}^{r_{1}},\ldots,p_{\ell}^{r_{\ell}}\}$ and let $m=\bar{p}_{1}^{\bar{r}_{1}}\cdots\bar{p}_{s}^{\bar{r}_{s}}$ 
be its prime decomposition. Notice that $\{\bar{p}_{1},\ldots,\bar{p}_{s}\}$ is the set of distinct 
primes in $\{p_{1},\ldots,p_{\ell}\}$  and each~$\bar{r}_{j}$ is the maximum power of $\bar{p}_{j}$ 
appearing in the list $\{p_{1}^{r_{1}},\ldots,p_{\ell}^{r_{\ell}}\}$.
Therefore $M_{2n}$ can be rewritten as 
\begin{equation} 
  M_{2n}\simeq P^{2n}(m)\vee\bigvee_{s=1}^{t} P^{2n}(2^{r_{s}})\vee\Sigma A  
\end{equation} 
where $\Sigma A$ is the wedge of the remaining Moore spaces in $M_{2n}$. 

Define $j$ and $j'$ by the composites  
\[j\colon \Sigma A\vee\bigvee_{s=1}^{t} P^{2n}(2^{r_{s}})\hookrightarrow\nameddright{P^{2n}(m)\vee\bigvee_{s=1}^{t} P^{2n}(2^{r_{s}})\vee\Sigma A}{\simeq}{M_{2n}}{}{M},\] 
\[j'\colon \Sigma A\hookrightarrow\nameddright{P^{2n}(m)\vee\bigvee_{s=1}^{t} P^{2n}(2^{r_{s}})\vee\Sigma A}{\simeq}{M_{2n}}{}{M}.\] 
Define the spaces $V$ and $V'$, and the maps $\mathfrak{h}$ and $\mathfrak{h}'$, by the homotopy pushout diagram 
\begin{equation} 
  \label{frakhpo} 
  \diagram 
      \Sigma A\rdouble\dto & \Sigma A\dto^{j'} & \\ 
      \Sigma A\vee\bigvee_{s=1}^{t} P^{2n}(2^{r_{s}})\rto^-{j}\dto & M\rto^-{\mathfrak{h}}\dto^{\mathfrak{h}'} 
            & V\ddouble \\ 
      \bigvee_{s=1}^{t} P^{2n}(2^{r_{s}})\rto & V'\rto & V. 
  \enddiagram  
\end{equation}  
Then $V=P^{2n}(m)\cup e^{4n-1}$ and $V'=\big(P^{2n}(m)\vee\bigvee_{s=1}^{t} P^{2n}(2^{r_{s}})\big)\cup e^{4n-1}$. 
Observe that the bottom row implies that there is a $p$-local homotopy equivalence $V\simeq V'$ for any 
odd prime $p$.  

We wish to show that $\Omega\frak{h}'$ has a right homotopy inverse. That is, the analogue 
of Theorem~\ref{introloopMdecomp} we aim to prove is based on a decomposition of $\Omega M$ 
involving $\Omega V'$ as a factor rather than $\Omega V$. To do so we will take a local-to-global 
approach by applying the fracture theorem of \cite[Theorem 8.1.3]{MP}. However, first we need a 
functional version of Lemma \ref{Fextend} and a modification of Proposition~\ref{Vmdecomp}. 

Let 
\(F\colon\namedright{S^{4n-2}}{}{M_{2n}}\) 
be the attaching map for the top cell of $M$.
Define $f$ and $f'$ by the composites
\[f\colon\nameddright{S^{4n-2}}{F}{M_{2n}}{\frak{q}}{P^{2n}(m)}\]
\[f'\colon\nameddright{S^{4n-2}}{F}{M_{2n}}{\frak{q}'}{P^{2n}(m)\vee\bigvee_{s=1}^{t} P^{2n}(2^{r_{s}})}\] 
where $\frak{q}$ and $\frak{q}'$ collapse $\Sigma A\vee\bigvee_{s=1}^{t} P^{2n}(2^{r_{s}})$ and $\Sigma A$ in $M_{2n}$ to a point respectively. Then $f$ and~$f'$ are the attaching maps for the top cell of $V$ and $V'$ respectively. In particular, there is a homotopy pushout 
\begin{equation} 
  \label{fpo} 
  \diagram 
     S^{4n-2}\rto^-{F}\ddouble & M_{2n}\rto\dto^{\mathfrak{q}} & M\dto^{\mathfrak{h}} \\ 
     S^{4n-2}\rto^-{f} & P^{2n}(m)\rto^-{i} & V  
  \enddiagram 
\end{equation} 
where $i$ is the inclusion of the $2n$-skeleton. 

Let $\widehat{m}$ be the least common multiple of 
$\{p_{1}^{r_{1}},\ldots,p_{\ell}^{r_{\ell}}\}\cup \{2^{r_{1}},\ldots,2^{r_t}\}$. 
In particular, $\widehat{m}=2^v m$ with $2^v={\rm max}\{2^{r_{1}},\ldots,2^{r_t}\}$. 
Anticipating that the upper bound on the exponent for $\pi_{\ast}(P^{2n}(2^{r}))$ in~\cite{Bar} 
is higher than for odd primes, let 
$\widetilde{v}=v+1$ and let $\widetilde{m}=2^{\widetilde{v}}m$. 
By~\cite[proof of Proposition 1.5]{N}, there is a canonical morphism of homotopy cofibrations
 \begin{equation} 
   \label{Q2t} 
   \diagram 
      S^{4n-2} \rto \ddouble &P^{4n-1}(\widetilde{m})\rto^-{\widetilde{q}} \dto^{Q} & S^{4n-1} \dto^{2^{\widetilde{v}}} \\ 
      S^{4n-2}  \rto              &P^{4n-1}(m) \rto^-{q} & S^{4n-1} ,
  \enddiagram 
\end{equation} 
where $Q$ collapses $P^{4n-1}(\widetilde{m})\simeq P^{4n-1}(2^{\widetilde{v}})\vee P^{4n-1}(m)$ to $P^{4n-1}(m)$ 
and $\widetilde{q}$ and $q$ are the pinch maps to the top cell.  
The following lemma is the analogue of Lemma \ref{Fextend}. 

\begin{lemma} 
   \label{Fextend2} 
 The maps 
   \(\namedright{S^{4n-2}}{F}{M_{2n}}\) and \(\namedright{S^{4n-2}}{f}{P^{2n}(m)}\) 
   extend to maps 
   \(G\colon\namedright{P^{4n-1}(\widetilde{m})}{}{M_{2n}}\) and \(g\colon\namedright{P^{4n-1}(m)}{}{P^{2n}(m)}\) respectively. Moreover, the extensions are compatible, that is, there is the homotopy commutative diagram   
     \[\diagram 
      P^{4n-1}(\widetilde{m})\rto^-{G} \dto^{Q} & M_{2n} \dto^{\frak{q}} \\ 
      P^{4n-1}(m) \rto^-{g} & P^{2n}(m).
  \enddiagram\] 
\end{lemma}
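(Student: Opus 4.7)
The plan is to construct $G$ first via a direct exponent argument, and then produce $g$ as a factorization of $\mathfrak{q}\circ G$ through the collapse map $Q$; ordering things in this way makes the compatibility square automatic. Extending $F$ to $G\colon P^{4n-1}(\widetilde{m})\to M_{2n}$ is equivalent to showing $\widetilde{m}\cdot F\simeq\ast$, or equivalently that the adjoint $\widetilde{F}\colon S^{4n-3}\to\Omega M_{2n}$ has order dividing $\widetilde{m}$. The idea is to run Lemma~\ref{Fextend}'s Hilton--Milnor analysis on the enlarged wedge $M_{2n}\simeq\bigvee_k P^{2n}(p_k^{r_k})\vee\bigvee_s P^{2n}(2^{r_s})$, decomposing $\widetilde{F}$ into components landing in three types of factors: the single-loop pieces $\Omega P^{2n}(p_k^{r_k})$ and $\Omega P^{2n}(2^{r_s})$, length-two bracket pieces of the form $\Omega\Sigma(P^{2n-1}(\cdot)\wedge P^{2n-1}(\cdot))$, and a higher-bracket remainder whose connectivity exceeds $4n-3$ and is therefore undetected by $S^{4n-3}$.

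By~\cite{Bar}, components in $\Omega P^{2n}(p_k^{r_k})$ have order dividing $p_k^{r_k}$, hence dividing $m$; components in $\Omega P^{2n}(2^{r_s})$ have order dividing $2^{r_s+1}$, hence dividing $2^{\widetilde{v}}$, where the extra factor of $2$ is Barratt's exponent loss at the prime $2$. Mixed-prime smash factors are contractible by~\cite[Corollary~6.6]{N} and contribute trivially. For same-prime smash terms, the Neisendorfer splitting (immediate at odd primes, and available at $2$ once $r_s\geq 2$) writes them as wedges of mod-$p^{r}$ Moore spaces whose Barratt exponents again divide $m$ or $2^{\widetilde{v}}$. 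Taking the least common multiple across all components shows $\widetilde{m}\cdot\widetilde{F}\simeq\ast$, producing $G$. To construct $g$, consider $\mathfrak{q}\circ G\colon P^{4n-1}(\widetilde{m})\to P^{2n}(m)$ and use the splitting $P^{4n-1}(\widetilde{m})\simeq P^{4n-1}(2^{\widetilde{v}})\vee P^{4n-1}(m)$: the restriction of $\mathfrak{q}\circ G$ to $P^{4n-1}(2^{\widetilde{v}})$ is a map from a purely $2$-primary space into the odd-primary $P^{2n}(m)$ and so is null homotopic prime-by-prime, hence globally. Consequently $\mathfrak{q}\circ G$ factors as $g\circ Q$ for some $g\colon P^{4n-1}(m)\to P^{2n}(m)$, which is exactly the desired commutative square. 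Restricting $g\circ Q\simeq\mathfrak{q}\circ G$ to the bottom cell $S^{4n-2}$ of $P^{4n-1}(\widetilde{m})$ and using from~(\ref{Q2t}) that $Q$ is the identity on $S^{4n-2}$ gives $g|_{S^{4n-2}}\simeq\mathfrak{q}\circ F=f$, so $g$ extends $f$.

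The main obstacle is the $2$-primary part of the exponent estimate. Lemma~\ref{Fextend}'s odd-primary proof depends on the smash-product decomposition of~\cite[Corollary~6.6]{N}, which breaks down for $P(2)\wedge P(2)$. The standing assumption $r_s\geq 2$ (implicit in the absence of mod-$2$ summands from the wedge) is precisely what rules out this pathological case and permits an analogous splitting at the prime~$2$, while the extra power encoded in $\widetilde{v}=v+1$ absorbs Barratt's weaker exponent bound $2^{r_s+1}$ there; together they ensure that every component of $\widetilde{F}$ has order dividing $\widetilde{m}$.
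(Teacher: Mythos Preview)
Your proof is correct and follows essentially the same approach as the paper: the existence of $G$ is the Hilton--Milnor/Barratt argument of Lemma~\ref{Fextend} augmented by the $2$-primary exponent bound $2^{r+1}\cdot\pi_{4n-2}(P^{2n}(2^{r}))=0$, and the compatible $g$ is obtained exactly as the paper does by noting that $\mathfrak{q}\circ G$ restricted to the $P^{4n-1}(2^{\widetilde{v}})$ summand is null (a $2$-primary map into the odd-primary target $P^{2n}(m)$) and hence factors through $Q$. Your explicit check that $g|_{S^{4n-2}}\simeq f$ via the bottom-cell restriction is a detail the paper leaves implicit.
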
 

\begin{proof} 
The existence of $G$ follows exactly as in the proof of Lemma~\ref{Fextend}, using the fact 
that~\cite{Bar} implies that $2^{r+1}\cdot\pi_{4n-2}(P^{2n}(2^{r}))\cong 0$ if $r\geq 2$.

A choice of the map $g$ is given by Lemma~\ref{Fextend}, but we need to make sure that 
a choice is made that also gives the asserted homotopy commutative diagram. Notice that 
there is a homotopy cofibration 
\(\nameddright{P^{4n-1}(2^{\widetilde{v}})}{\omega}{P^{4n-1}(\widetilde{m})}{Q}{P^{4n-1}(m)}\) 
where $\omega$ is the inclusion into $P^{4n-1}(\widetilde{m})\simeq P^{4n-1}(m)\vee P^{4n-1}(2^{\widetilde{v}})$. 
If the composite 
\[\namedddright{P^{4n-1}(2^{\widetilde{v}})}{\omega}{P^{4n-1}(\widetilde{m})}{G}{M_{2n}}{\mathfrak{q}}{P^{2n}(m)}\] 
is null homotopic then $\mathfrak{q}\circ G$ extends along $Q$ to a map 
\(g\colon\namedright{P^{4n-1}(m)}{}{P^{2n}(m)}\) 
and we are done. To see that $\mathfrak{q}\circ G\circ\omega$ is null homotopic, observe that 
it represents an element of $2$-torsion in $\pi_{4n-2}(P^{2n}(m))$. But the space $P^{2n}(m)$ 
is $2$-locally contractible since $m$ is a product of odd primes. 
\end{proof} 

\begin{remark} 
\label{Fextend2remark} 
It is the use of Lemma~\ref{Fextend} that prevents us from considering $2$-torsion 
in the cohomology of $M$. Its proof uses the property that the smash product 
$P^{a}(p^{r})\wedge P^{b}(p^{r})$ is homotopy equivalent 
to a wedge of two mod-$p^{r}$ Moore spaces: this only holds if $p^{r}\neq 2$. 
\end{remark} 

From the extension of $F$ to $G$ in Lemma~\ref{Fextend2} we obtain a homotopy cofibration diagram 
\begin{equation} 
  \label{FGextend2} 
  \diagram 
        S^{4n-2}\rto\ddouble & P^{4n-1}(\widetilde{m})\rto^-{\widetilde{q}}\dto^{G} & S^{4n-1}\dto^{H} \\ 
        S^{4n-2}\rto^-{F} & M_{2n}\rto^-{I} & M 
  \enddiagram 
\end{equation} 
where $I$ is the skeletal inclusion and $H$ is an induced map of cofibres. 

\begin{lemma} 
   \label{Hhpomap} 
   There is a homotopy commutative diagram 
   \[\diagram 
        S^{4n-1} \rto^-{H} \dto^{2^{\widetilde{v}}} & M \dto^{\frak{h}} \\ 
        S^{4n-1} \rto^-{h} & V 
    \enddiagram\] 
   for a map $h$ satisfying $h\circ q\simeq i\circ g$. 
\end{lemma}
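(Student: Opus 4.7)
The plan is to build $h$ in two stages. First, construct a preliminary map $h_{0}\colon S^{4n-1}\to V$ with $h_{0}\circ q\simeq i\circ g$; then modify it by a suitable element of $\pi_{4n-1}(V)$ so that the compatibility $\mathfrak{h}\circ H\simeq h\circ 2^{\widetilde{v}}$ also holds. To produce $h_{0}$, note that $g$ extends $f$ (via the left square of the diagram in Lemma~\ref{Fextend2}) and $i\circ f\simeq\ast$ (because $V$ is the cofibre of $f$). Hence $i\circ g\colon P^{4n-1}(m)\to V$ is null on the bottom cell $S^{4n-2}\hookrightarrow P^{4n-1}(m)$ and factors through $q$, giving such an $h_{0}$.

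Next, I would precompose both $\mathfrak{h}\circ H$ and $h_{0}\circ 2^{\widetilde{v}}$ with $\widetilde{q}$ and chain together equalities using: the definition of $H$ as an induced map of cofibres (so $H\circ\widetilde{q}\simeq I\circ G$), the pushout~(\ref{fpo}) (so $\mathfrak{h}\circ I\simeq i\circ\mathfrak{q}$), Lemma~\ref{Fextend2} (so $\mathfrak{q}\circ G\simeq g\circ Q$), the choice of $h_{0}$, and diagram~(\ref{Q2t}) (so $q\circ Q\simeq 2^{\widetilde{v}}\circ\widetilde{q}$):
\[
\mathfrak{h}\circ H\circ\widetilde{q}\simeq \mathfrak{h}\circ I\circ G\simeq i\circ\mathfrak{q}\circ G\simeq i\circ g\circ Q\simeq h_{0}\circ q\circ Q\simeq h_{0}\circ 2^{\widetilde{v}}\circ\widetilde{q}.
\]
Thus the class $\mathfrak{h}\circ H-h_{0}\circ 2^{\widetilde{v}}\in\pi_{4n-1}(V)$ lies in $\ker\widetilde{q}^{\ast}$. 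The Puppe sequence of $\widetilde{q}$ identifies this kernel with the image of multiplication by $\widetilde{m}$ (the connecting map $S^{4n-1}\to S^{4n-1}$ being $\pm$ the degree $\widetilde{m}$ map), so $\mathfrak{h}\circ H\simeq h_{0}\circ 2^{\widetilde{v}}+\widetilde{m}\cdot\eta$ for some $\eta\in\pi_{4n-1}(V)$.

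Finally, I would set $h:=h_{0}+m\cdot\eta$. Since $m$ is odd, the degree-$m$ self-map of $P^{4n-1}(m)$ is nullhomotopic, so $m\cdot q\simeq\ast$ in $[P^{4n-1}(m),S^{4n-1}]$; consequently $h\circ q\simeq h_{0}\circ q+\eta\circ(m\cdot q)\simeq i\circ g$. Using that precomposition with the degree-$2^{\widetilde{v}}$ self-map of $S^{4n-1}$ is multiplication by $2^{\widetilde{v}}$ on $\pi_{4n-1}(V)$, we also obtain $h\circ 2^{\widetilde{v}}\simeq h_{0}\circ 2^{\widetilde{v}}+2^{\widetilde{v}}m\cdot\eta=h_{0}\circ 2^{\widetilde{v}}+\widetilde{m}\cdot\eta\simeq\mathfrak{h}\circ H$. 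The main obstacle is the calibration of the correction to $h_{0}$: the discrepancy between $\mathfrak{h}\circ H$ and $h_{0}\circ 2^{\widetilde{v}}$ is only controlled up to a multiple of $\widetilde{m}$, yet modifying $h_{0}$ must not destroy the identity $h\circ q\simeq i\circ g$; adding $m\cdot\eta$ (rather than $\eta$) achieves both, exploiting the factorization $\widetilde{m}=2^{\widetilde{v}}m$ and the oddness of $m$.
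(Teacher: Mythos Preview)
Your proof is correct but takes a different route from the paper's. The paper assembles the ingredients into a cube whose top face is the right-hand square of~(\ref{Q2t}) and observes that this square is a homotopy pushout; the chain of homotopies you derived, $\mathfrak{h}\circ H\circ\widetilde{q}\simeq i\circ g\circ Q$, is exactly the compatibility needed to invoke its universal property, so the map $h$ is produced in one stroke, satisfying both $h\circ q\simeq i\circ g$ and $h\circ 2^{\widetilde{v}}\simeq\mathfrak{h}\circ H$ simultaneously. Your approach instead builds a preliminary $h_{0}$ from the cofibration defining $V$, identifies the discrepancy $\mathfrak{h}\circ H-h_{0}\circ 2^{\widetilde{v}}$ as lying in $\ker\widetilde{q}^{\ast}=\widetilde{m}\cdot\pi_{4n-1}(V)$ via the Puppe sequence, and then corrects by $m\cdot\eta$, exploiting the factorisation $\widetilde{m}=2^{\widetilde{v}}m$. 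Your argument is more elementary in that it never needs to verify or use that~(\ref{Q2t}) is a pushout, relying only on cofibre sequences and arithmetic in $\pi_{4n-1}(V)$; the paper's argument is shorter and more conceptual. One minor remark: your justification that $(m\eta)\circ q\simeq\ast$ via the nullhomotopy of the degree-$m$ self-map of $P^{4n-1}(m)$ is valid, but it is slightly more direct to note that $(m\eta)\circ q=\eta\circ(m\circ q)$ and that $m\circ q\simeq\ast$ since these are consecutive maps in the Puppe sequence of $P^{4n-1}(m)$.
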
 

\begin{proof} 
Consider the cube 
\[\spreaddiagramrows{-1pc}\spreaddiagramcolumns{-1pc}\diagram
      P^{4n-1}(\widetilde{m})\rrto^-{\widetilde{q}}\drto^{Q}\ddto^{G} & & S^{4n-1}\dline\drto^{2^{\widetilde{v}}} & \\
      & P^{4n-1}(m)\rrto^-(0.3){q}\ddto^-(0.7){g} & \dto^-(0.3){H} & S^{4n-1}\dddashed|>\tip^-(0.6){h} \\
      M_{2n}\rline^-(0.6){I}\drto^{\mathfrak{q}} & \rto & M\drto^{\mathfrak{h}} & \\
      & P^{2n}(m)\rrto^-{i} & & V
 \enddiagram\] 
where the map $h$ will be defined momentarily. The top face is a homotopy pushout 
by~(\ref{Q2t}), the rear face homotopy commutes by~(\ref{FGextend2}), the left face homotopy 
commutes by Lemma~\ref{Fextend2}, and the bottom face homotopy commutes by~(\ref{fpo}). 
The homotopy commutativity of these four faces implies that 
$i\circ g\circ Q\simeq\mathfrak{h}\circ H\circ\widetilde{q}$. Therefore, as the top face is a homotopy 
pushout, there is a pushout map 
\(h\colon\namedright{S^{4n-1}}{}{V}\) 
such that $h\circ q\simeq i\circ g$ and $h\circ 2^{\widetilde{v}}\simeq\mathfrak{h}\circ H$. In particular, the 
homotopy $h\circ 2^{\widetilde{v}}\simeq\mathfrak{h}\circ H$ gives the homotopy commutative diagram 
asserted by the lemma. 
\end{proof} 

Next, we modify Proposition~\ref{Vmdecomp}. Similarly to the map $e$ in Section \ref{sec:rankm}, define $e'$ by the composite 
\[e'\colon\lllnameddright{\bigg(\prod_{j=1}^{s} S^{2n-1}\{\bar{p}_{j}^{\bar{r}_{j}}\}\bigg)\times\Omega S^{4n-1}} 
      {T\times\Omega h}{\Omega V\times\Omega V}{\mu}{\Omega V}.\]  
Notice that $e'$ replaces the map  $\frak{h}\circ H$ in the definition of $e$ appearing in Section~\ref{sec:rankm} by $h$, but the property from Lemma~\ref{Hhpomap} that $h\circ q\simeq i\circ g$ ensures that the argument for Proposition \ref{Vmdecomp} also applies to $e'$.
\begin{proposition} 
   \label{Vmdecomp2} 
   If $n\geq 2$ then the map $e'$ is a homotopy equivalence. ~$\qqed$
\end{proposition}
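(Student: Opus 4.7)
The plan is to adapt the proof of Proposition~\ref{Vmdecomp}, verifying that $e'$ becomes a homotopy equivalence after localizing at each prime and rationally; the arithmetic fracture theorem then yields the integral statement. The only substantive change from Proposition~\ref{Vmdecomp} is that $\mathfrak{h}\circ H$ has been replaced by $h$, and Lemma~\ref{Hhpomap} supplies the two compatibilities $h\circ 2^{\widetilde{v}}\simeq\mathfrak{h}\circ H$ and $h\circ q\simeq i\circ g$, which will be used to reduce the argument to the earlier case.

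At an odd prime $p=\bar{p}_j$, first observe that $P^{2n}(m)\simeq P^{2n}(\bar{p}_j^{\bar{r}_j})$ and $P^{4n-1}(\widetilde{m})\simeq P^{4n-1}(\bar{p}_j^{\bar{r}_j})$ $p$-locally, and that $2^{\widetilde{v}}$ is a $p$-local unit, so $h$ and $\mathfrak{h}\circ H$ are $p$-locally equivalent. The relation $h\circ q\simeq i\circ g$ then supplies the right square of a $p$-local analogue of~(\ref{fgextend}), so the arguments of Lemma~\ref{oddph} and Proposition~\ref{loopVdecomp} apply to show that the restriction of $e'$ to $S^{2n-1}\{\bar{p}_j^{\bar{r}_j}\}\times\Omega S^{4n-1}$ is a $p$-local homotopy equivalence. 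The remaining factors $S^{2n-1}\{\bar{p}_k^{\bar{r}_k}\}$ for $k\neq j$ are $p$-locally contractible, so $e'$ is a $p$-local homotopy equivalence.

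At $p=2$, since $m$ is odd the space $V\simeq P^{2n}(m)\cup e^{4n-1}$ is $2$-locally homotopy equivalent to $S^{4n-1}$, and every $S^{2n-1}\{\bar{p}_j^{\bar{r}_j}\}$ is $2$-locally contractible. Hence $e'$ reduces $2$-locally to $\Omega h\colon\Omega S^{4n-1}\to\Omega V$, and the goal becomes exhibiting $h$ as a $2$-local homotopy equivalence. Since $P^{4n-1}(m)$ is a suspension, the Mayer-Vietoris/pushout argument used for~(\ref{gVcofib}), applied to the commutative square from Lemma~\ref{Hhpomap}, yields a homotopy cofibration
\[
\lnameddright{P^{4n-1}(m)}{g-q}{P^{2n}(m)\vee S^{4n-1}}{i+h}{V}.
\]
Localized at $2$, both $P^{4n-1}(m)$ and $P^{2n}(m)$ are contractible, so this collapses to display $h$ as a $2$-local homotopy equivalence. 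For primes $p\notin\{2,\bar{p}_1,\ldots,\bar{p}_s\}$ and rationally, $V$ reduces to $S^{4n-1}$, each $S^{2n-1}\{\bar{p}_j^{\bar{r}_j}\}$ is contractible, and a simpler version of the same argument shows $h$ is an equivalence, so $e'$ is an equivalence.

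The main obstacle is the $2$-local step: at odd primes one can freely exchange $h$ with $\mathfrak{h}\circ H$ via the unit $2^{\widetilde{v}}$ and import everything from Section~\ref{sec:rankm}, but $2$-locally this unit vanishes and one must appeal directly to the pushout structure from Lemma~\ref{Hhpomap}, via the cofibration displayed above, to see that $h$ itself carries the top-dimensional cell $2$-locally.
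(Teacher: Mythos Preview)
Your proposal is correct and follows the same route the paper intends: the single relation $h\circ q\simeq i\circ g$ from Lemma~\ref{Hhpomap} produces the analogue of diagram~(\ref{fgextend}) with $h$ in place of $\mathfrak{h}\circ H$, after which the prime-by-prime argument of Proposition~\ref{Vmdecomp} goes through verbatim (your Mayer--Vietoris cofibration for the $2$-local step is exactly what that diagram collapses to when $p\nmid m$). The appeal to $h\circ 2^{\widetilde{v}}\simeq\mathfrak{h}\circ H$ at odd primes is harmless but unnecessary---only $h\circ q\simeq i\circ g$ is needed, as the paper notes.
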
 

Finally, we show that $\Omega\frak{h}'$ has a right homotopy inverse using a local-to-global approach.  
Let $T_o$ be the set of odd primes and $T_e=\{2\}$. 
\begin{lemma} 
   \label{localMhinv} 
   The map
   \(\namedright{\Omega M}{\Omega\frak{h}'}{\Omega V'}\) 
    has: 
    \begin{itemize} 
       \item[(i)] a $T_{o}$-local right homotopy inverse 
                     \(\theta_{o}\colon\namedright{\Omega V'}{}{\Omega M}\) and 
       \item[(ii)] a $T_{e}$-local right homotopy inverse 
                     \(\theta_{e}\colon\namedright{\Omega V'}{}{\Omega M}\), 
   \end{itemize} 
   both of whose rationalizations are the identity map on $\Omega S^{4n-1}$. 
\end{lemma}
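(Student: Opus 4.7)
The plan is to build $\theta_o$ and $\theta_e$ one set of primes at a time, by reducing each local case to results already established in the paper, and then to pin down their rational behaviour by a uniqueness argument.

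For $\theta_o$, localize at the set $T_o$ of odd primes. This makes each $2$-primary Moore summand $P^{2n}(2^{r_s})$ contractible, so the bottom row of diagram~(\ref{frakhpo}) yields a $T_o$-local homotopy equivalence $V' \simeq V$, and $M_{(T_o)}$ becomes a $(2n-2)$-connected $(4n-1)$-dimensional Poincar\'e Duality complex whose degree-$2n$ cohomology consists only of odd torsion. Hence Corollary~\ref{Mhinv}, applied to the $T_o$-localization of the cofibration $\Sigma A \xrightarrow{j'} M \xrightarrow{\mathfrak{h}'} V'$, which $T_o$-locally becomes precisely the cofibration studied in Section~\ref{sec:rankm}, supplies the desired $T_o$-local right homotopy inverse $\theta_o$.

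For $\theta_e$, localize at $T_e = \{2\}$. Since $A$ is by construction a wedge of odd-primary Moore spaces, $\Sigma A$ is $T_e$-locally contractible, so $j'$ is $T_e$-locally null-homotopic and the cofibration $\Sigma A \to M \xrightarrow{\mathfrak{h}'} V'$ shows that $\mathfrak{h}'$ is a $T_e$-local homotopy equivalence. Take $\theta_e$ to be its $T_e$-local homotopy inverse; it is then automatically a right homotopy inverse of $\Omega \mathfrak{h}'$.

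For the rational statement, observe that both $\Sigma A$ and $\bigvee_{s=1}^{t} P^{2n}(2^{r_s})$ are wedges of torsion Moore spaces, hence rationally contractible; consequently $\mathfrak{h}'$ is a rational equivalence. The map $H\colon S^{4n-1} \to M$ in diagram~(\ref{FGextend2}) is also a rational equivalence, since $P^{4n-1}(\widetilde{m})$ and $M_{2n}$ are rationally trivial. Use $\Omega H$ and $\Omega(\mathfrak{h}' \circ H)$ as the identifications $\Omega M \simeq_{\mathbb{Q}} \Omega S^{4n-1}$ and $\Omega V' \simeq_{\mathbb{Q}} \Omega S^{4n-1}$; under these identifications the rationalization of $\Omega \mathfrak{h}'$ is the identity on $\Omega S^{4n-1}$. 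Since any right homotopy inverse of a rational equivalence agrees rationally with its unique rational inverse, both $(\theta_o)_{\mathbb{Q}}$ and $(\theta_e)_{\mathbb{Q}}$ equal the identity on $\Omega S^{4n-1}$. The one delicate step is the first paragraph: one must verify that the constructions in Section~\ref{sec:rankm}---the skeletal decomposition of $M$, the attaching map for the top cell, the lift $G$ of Lemma~\ref{Fextend2}, and the decomposition of $\Omega V$ from Proposition~\ref{Vmdecomp2}---all commute with $T_o$-localization, so that Corollary~\ref{Mhinv} may be legitimately applied in this localized setting.
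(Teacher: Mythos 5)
Your argument for part (ii) is essentially the paper's. For part (i), however, you take a different route, and it contains a genuine gap that you flag but understate. You propose to $T_o$-localize $M$ and then apply Corollary~\ref{Mhinv} to the localized space. But Corollary~\ref{Mhinv} is an integral statement about an integral CW-complex whose degree-$2n$ cohomology is entirely odd torsion; the $M$ of Section~\ref{sec:general} is not such a complex, and its $T_o$-localization is not a CW-complex at all but a $T_o$-local space. To invoke the corollary you would need either (a) an integral CW-complex with only odd torsion in degree~$2n$ that is $T_o$-equivalent to $M$, together with a verification that it is a Poincar\'e Duality complex (a nontrivial cup-product assertion in the spirit of Lemma~\ref{pkchoice}); or (b) a $T_o$-local reworking of all of Section~\ref{sec:rankm} (Lemma~\ref{Fextend}, Proposition~\ref{Vmdecomp}, Lemma~\ref{preMhinv}, etc.). Neither is done in the paper, and neither is a routine ``commutes with localization'' check. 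This is precisely why Section~\ref{sec:general} introduces the modified integral constructions Lemma~\ref{Fextend2}, Lemma~\ref{Hhpomap}, and Proposition~\ref{Vmdecomp2}: they produce the maps $G$, $H$, $h$, and the equivalence $e'$ integrally, so that after localizing at $T_o$ the paper can write down the explicit right inverse $\theta_o' = \Omega(j'+H)\circ\mu\circ\bigl(S\times\Omega(\tfrac{1}{2^{\widetilde{v}}})\bigr)\circ (e')^{-1}$ via a diagram chase through~(\ref{Todgrm}), crucially using the relation $h\circ 2^{\widetilde{v}}\simeq\mathfrak{h}\circ H$ from Lemma~\ref{Hhpomap}, which becomes $\Omega h \simeq \Omega(\mathfrak{h}\circ H)\circ\Omega(2^{-\widetilde{v}})$ once $2$ is inverted.

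Your treatment of the rationalization statement via uniqueness of the rational inverse is a legitimate alternative to the paper's direct verification and would suffice for the fracture argument in Lemma~\ref{generalMhinv}, modulo the sign issue on the degree of $h$ that the paper mentions parenthetically. But this does not repair the gap in part~(i): until $\theta_o$ is actually produced, there is nothing to which the uniqueness argument applies.
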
 
\begin{proof} 
For (i), by~(\ref{frakhpo}) the composite  
\(\nameddright{M}{\mathfrak{h}'}{V'}{}{V}\) 
is homotopic to  
\(\namedright{M}{\mathfrak{h}}{V}\). 
As 
\(\namedright{V'}{}{V}\) 
is a  $T_{o}$-local equivalence, to show that $\Omega\mathfrak{h}'$ has a $T_{o}$-local right 
homotopy inverse it suffices to prove that~$\Omega\frak{h}$ has a $T_{o}$-local right homotopy inverse 
\(\theta'_{o}\colon\namedright{\Omega V}{}{\Omega M}\). 
We then take $\theta_{o}$ to be the composite 
\(\nameddright{\Omega V'}{\simeq}{\Omega V}{\theta'_{o}}{\Omega M}\). 

Localize spaces and maps at $T_{o}$. Arguing as for Lemma~\ref{preMhinv} and using Lemma~\ref{Hhpomap} 
gives a homotopy commutative diagram
\begin{equation} 
  \label{Todgrm} 
  \diagram 
      &\Omega(P^{2n}(m)\vee S^{4n-1})  \rto^-{ \Omega(j'+H)} 
      & \Omega M \dto^-{\Omega\frak{h}} \\ 
     \bigg(\prod_{j=1}^{s} S^{2n-1}\{\bar{p}_{j}^{\bar{r}_{j}}\}\bigg)\times\Omega S^{4n-1}
       \rto^-{T\times\Omega h}   \urto^-{\mu\circ (S\times \Omega (\frac{1}{2^{\widetilde{v}}}))}    
       &\Omega V\times\Omega V \rto^-{\mu} &\Omega V
  \enddiagram 
\end{equation}  
while Proposition~\ref{Vmdecomp2} implies that the bottom row is the  homotopy equivalence $e'$. 
Therefore $\theta'_{o}=\Omega(j'+H)\circ\mu\circ(S\times\Omega(\frac{1}{2^{\widetilde{v}}}))\circ e'$ 
is a ($T_{o}$-local) right homotopy inverse for $\Omega\mathfrak{h}$. Rationally, $\mathfrak{h}$ 
is the identity map on $S^{4n-1}$, as is $h$ since $e'$ is an integral homotopy equivalence (technically, 
$h$ could have degree $\pm 1$ but if it is degree $-1$ we can replace $h$ by its negative). Thus 
the homotopy commutativity of~(\ref{Todgrm}) implies that, rationally, $\theta'_{o}$ must be the 
identity map on $\Omega S^{4n-1}$. 

For~(ii), the homotopy cofibration 
\(\nameddright{\Sigma A}{j'}{M}{\mathfrak{h}'}{V'}\) 
from~(\ref{frakhpo}) implies that $\mathfrak{h}'$ is a $T_{e}$-local homotopy equivalence 
since $\Sigma A$ is a wedge of odd primary Moore spaces and so is contractible when localized 
at $2$. Therefore $\mathfrak{h}'$ has a $T_{e}$-local right homotopy inverse $\theta'_{e}$. Further, 
as the rationalization of $\mathfrak{h}'$ is the identity map on $S^{4n-1}$, so is the rationalization 
of $\theta'_{e}$. Thus $\theta_{e}=\Omega\theta'_{e}$ is a $T_{e}$-local right homotopy inverse 
for $\Omega\mathfrak{h}'$ whose rationalization is the identity map on $\Omega S^{4n-1}$. 
\end{proof} 

The local right homotopy inverses for $\Omega\mathfrak{h}'$ in Lemma~\ref{localMhinv} are now 
assembled into an integral one. 

\begin{lemma} 
   \label{generalMhinv} 
The map \(\namedright{\Omega M}{\Omega\frak{h}'}{\Omega V'}\) has a right homotopy inverse $\theta$.
\end{lemma}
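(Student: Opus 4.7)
The plan is to glue the local right homotopy inverses $\theta_o$ and $\theta_e$ from Lemma~\ref{localMhinv} into an integral map $\theta$ using the arithmetic fracture square \cite[Theorem 8.1.3]{MP}. The prerequisites for this assembly are already supplied by Lemma~\ref{localMhinv}: a $T_o$-local map, a $T_e$-local map, and the fact that both rationalize to the identity on $\Omega S^{4n-1}$.

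First, I would verify that the hypotheses of the fracture theorem apply. The CW-complex $V'$ is simply-connected and of finite type (the hypothesis $n\geq 2$ ensures $4n-1\geq 7$, and in any case $V'$ is simply-connected since its $2n$-skeleton is a wedge of simply-connected Moore spaces), and rationally $V'\simeq S^{4n-1}$ because every Moore space summand is rationally contractible. The same is true of $M$, so $\Omega V'$ and $\Omega M$ are both $H$-spaces of finite type whose rationalizations are $\Omega S^{4n-1}$. This places us squarely in the setting in which the arithmetic square is a homotopy pullback. Since the $T_o$-localization $\theta_o$ and the $T_e$-localization $\theta_e$ have the same rationalization (the identity map on $\Omega S^{4n-1}$, by Lemma~\ref{localMhinv}), the fracture theorem produces a map
\[\theta\colon\namedright{\Omega V'}{}{\Omega M}\]
whose $T_o$-localization is $\theta_o$, whose $T_e$-localization is $\theta_e$, and whose rationalization is the identity on $\Omega S^{4n-1}$.

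It remains to verify that $\Omega\mathfrak{h}'\circ\theta\simeq\mathrm{id}_{\Omega V'}$, and I would again appeal to the fracture principle: since $[\Omega V',\Omega V']$ is also governed by the arithmetic pullback square, two self-maps that agree after $T_o$-localization, after $T_e$-localization, and rationally (with compatible rationalizations) must agree integrally. After $T_o$-localization the composite is $\Omega\mathfrak{h}'_{T_o}\circ\theta_o\simeq\mathrm{id}$ by Lemma~\ref{localMhinv}(i), after $T_e$-localization it is $\Omega\mathfrak{h}'_{T_e}\circ\theta_e\simeq\mathrm{id}$ by Lemma~\ref{localMhinv}(ii), and rationally it is the identity of $\Omega S^{4n-1}$ since $\Omega\mathfrak{h}'$ is the identity on $\Omega S^{4n-1}$ rationally. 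Therefore $\theta$ is an integral right homotopy inverse for $\Omega\mathfrak{h}'$. The main subtlety is simply checking the hypotheses of the fracture theorem; once those are in place, both the construction of $\theta$ and the verification that it inverts $\Omega\mathfrak{h}'$ on the right are formal consequences of Lemma~\ref{localMhinv} and the matching of rationalizations.
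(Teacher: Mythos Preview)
Your proposal is correct and follows essentially the same approach as the paper: both use Lemma~\ref{localMhinv} together with the arithmetic fracture square of \cite[Theorem~8.1.3]{MP} to glue $\theta_o$ and $\theta_e$ into an integral map $\theta$, using that their rationalizations agree. The paper presents this by drawing the explicit pullback diagram for $\Omega V'$ and $\Omega M$, while you instead spell out the verification of the fracture hypotheses and the check that $\Omega\mathfrak{h}'\circ\theta\simeq\mathrm{id}$; these are cosmetic differences, not substantive ones.
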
 
\begin{proof}
By the fracture theorem of \cite[Theorem 8.1.3]{MP}, for any simply-connected space $X$ there 
is a homotopy pullback 
\[\diagram 
      X\rto\dto & X_{\mathbb{Q}}\dto^{\Delta} \\ 
      X_{T_{o}}\times X_{T_{e}}\rto^{r} & X_{\mathbb{Q}}\times X_{\mathbb{Q}} 
  \enddiagram\] 
where $X_{T_{o}}$, $X_{T_{e}}$ and $X_{\mathbb{Q}}$ are the $T_{o}$, $T_{e}$ and 
$\mathbb{Q}$-localizations of $X$ respectively, $r$ is rationalization and $\Delta$ is the 
diagonal map. In our case, consider the diagram 
\[\diagram 
     \Omega V'_{T_o} \times \Omega V'_{T_e} \rto^-{r} \dto^-{\theta_o\times \theta_e} &
     \Omega V'_{\mathbb{Q}} \times \Omega V'_{\mathbb{Q}}  \dto^-{\theta_{\mathbb{Q}}\times \theta_{\mathbb{Q}}}& 
     \Omega V'_{\mathbb{Q}} \lto_-{\Delta} \dto^-{\theta_{\mathbb{Q}}}\ \\
      \Omega M_{T_o} \times \Omega M_{T_e} \rto^-{r}  &
     \Omega M_{\mathbb{Q}} \times \Omega M_{\mathbb{Q}}  & 
     \Omega M_{\mathbb{Q}} \lto_-{\Delta} 
  \enddiagram\] 
where $\theta_o$ and $\theta_e$ respectively are the $T_{o}$ and $T_{e}$-local right homotopy inverses 
for $\Omega\mathfrak{h}'$ in Lemma \ref{localMhinv} and $\theta_{Q}$ is the rationalization  
of the identity map on $\Omega  S^{4n-1}$. The left square homotopy commutes by 
Lemma~\ref{localMhinv} and the right square commutes by the naturality of the diagonal map. 
By the fracture theorem, the homotopy pullback of the maps in the top row is $\Omega V'$ and the 
homotopy pullback of the maps in the bottom row is $\Omega M$. The pullback property for $\Omega M$ 
and the homotopy  commutativity of the two squares implies that there is a pullback map 
\(\theta\colon\namedright{\Omega V'}{}{\Omega M}\) 
with the property that its $T_{e}$-localization is $\theta_{e}$, its $T_{o}$-localization is $\theta_{o}$ 
and its rationalization is $\theta_{\mathbb{Q}}$. Thus $\theta$ is a right homotopy inverse 
for $\Omega\mathfrak{h}'$ because it is when localized at any prime or rationally. 
\end{proof} 

From the homotopy cofibration 
\(\nameddright{\Sigma A}{j'}{M}{\frak{h}'}{V'}\) 
and the right homotopy inverse $\theta$ of $\Omega\frak{h}'$ in Lemma~\ref{generalMhinv}, 
the following theorem follows immediately from Theorem~\ref{GTcofib}. 

\begin{theorem} 
   \label{genloopMdecomp} 
   Let $M$ be a $(2n-2)$-connected $(4n-1)$-dimensional Poincar\'{e} Duality 
complex such that $n\geq 2$ and 
\[
H^{2n}(M;\mathbb{Z})\cong\bigoplus_{k=1}^{\ell}\mathbb{Z}/p_{k}^{r_{k}}\mathbb{Z}\oplus\bigoplus_{s=1}^{t}\mathbb{Z}/2^{r_s}\mathbb{Z}
\]  
where each $p_{k}$ is an odd prime, each $r_s\geq 2$, and $\ell\geq 1$. Then with $V'$ and $A$ chosen as above: 
   \begin{letterlist} 
      \item there is a homotopy fibration 
               \[\llnameddright{(\Sigma\Omega V'\wedge A)\vee\Sigma A}{[\gamma,j']+j'}{M}{\frak{h}'}{V'}\] 
               where $\gamma$ is the composite 
               \(\gamma\colon\nameddright{\Sigma\Omega V'}{\Sigma \theta}{\Sigma\Omega M}{ev}{M}\); 
      \item the homotopy fibration in~(a) splits after looping to give a homotopy equivalence 
               \[\Omega M\simeq\Omega V'\times\Omega((\Sigma\Omega V'\wedge A)\vee\Sigma A).\]  
   \end{letterlist} 
\end{theorem}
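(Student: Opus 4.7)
The plan is to reduce the theorem to a direct application of Theorem~\ref{GTcofib}. Diagram~(\ref{frakhpo}) already provides the homotopy cofibration
\[\nameddright{\Sigma A}{j'}{M}{\mathfrak{h}'}{V'},\]
so both parts (a) and (b) will follow immediately once I produce a right homotopy inverse $\theta\colon\Omega V' \to \Omega M$ for $\Omega\mathfrak{h}'$. Indeed, given such a $\theta$, Theorem~\ref{GTcofib} identifies the homotopy fibre of $\mathfrak{h}'$ as $(\Sigma\Omega V'\wedge A)\vee\Sigma A$ with attaching map $[\gamma,j']+j'$, where $\gamma=ev\circ\Sigma\theta$, and gives the corresponding loop space splitting. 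So the entire argument reduces to constructing $\theta$.

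To construct $\theta$, my plan is a local-to-global assembly via the fracture square of~\cite[Theorem 8.1.3]{MP}. The key observation is that $V'$ and $M$ differ from the purely odd-primary situation of Section~\ref{sec:rankm} only through the extra wedge summand $\bigvee_{s} P^{2n}(2^{r_s})$, which is visible only $2$-locally and invisible at odd primes. Accordingly, I would produce $\theta$ after $T_o$-localization, after $T_e$-localization, and then splice them rationally. At odd primes, $V'\simeq V$ and $\mathfrak{h}'$ becomes $\mathfrak{h}$, so I can recycle Proposition~\ref{Vmdecomp2}: the decomposition $\Omega V \simeq \bigl(\prod_j S^{2n-1}\{\bar{p}_j^{\bar{r}_j}\}\bigr)\times\Omega S^{4n-1}$ lets me lift the two factors through $\Omega(j'+H)$, using $\Omega j'\circ S$ on the product of sphere-with-torsion factors and a divided lift $\Omega(\tfrac{1}{2^{\widetilde{v}}})\circ\Omega H$ on the $\Omega S^{4n-1}$ factor; Lemma~\ref{Hhpomap} (with the compatibility $h\circ q \simeq i\circ g$) is what makes this division legal. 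At the prime $2$, $\Sigma A$ is a wedge of odd primary Moore spaces and hence $2$-locally contractible, so $\mathfrak{h}'$ is itself a $2$-local equivalence and inverts trivially.

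With both local inverses in hand, the fracture square then produces an integral lift $\theta$ provided the two local maps rationalize to a common map $\Omega V'_\mathbb{Q} \to \Omega M_\mathbb{Q}$. This rational compatibility is the main obstacle I foresee: the odd-primary inverse is engineered from an explicit homotopy equivalence and requires a mild sign adjustment on $h$ to rationalize to the identity on $\Omega S^{4n-1}$, while the $2$-local inverse is the looping of a cofibration right inverse and is manifestly the rational identity. Once both are arranged to rationalize to the identity map on $\Omega S^{4n-1}$, the pullback property of the fracture square delivers $\theta$ and completes Lemma~\ref{generalMhinv}. Feeding $\theta$ into Theorem~\ref{GTcofib} then yields the theorem.
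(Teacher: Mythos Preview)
Your proposal is correct and follows exactly the paper's approach: reduce to Theorem~\ref{GTcofib} via the cofibration in~(\ref{frakhpo}), then build the right homotopy inverse $\theta$ by the fracture square, using Proposition~\ref{Vmdecomp2} and Lemma~\ref{Hhpomap} at odd primes and the $2$-local contractibility of $\Sigma A$ at the prime $2$, with both local inverses normalized to rationalize to the identity on $\Omega S^{4n-1}$. One small slip: the divided lift should be written $\Omega H\circ\Omega(\tfrac{1}{2^{\widetilde{v}}})$ rather than $\Omega(\tfrac{1}{2^{\widetilde{v}}})\circ\Omega H$, since the degree map is a self-map of $S^{4n-1}$.
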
 

Note that when $t=0$, Theorem \ref{genloopMdecomp} reduces to part (a) and (b) of Theorem \ref{introloopMdecomp}. Note also that, unlike Theorem~\ref{introloopMdecomp}, 
Theorem~\ref{genloopMdecomp} does not decompose $\Omega V'$ any further.

\section{An extension to some $2$-torsion cases II} 
\label{sec:p=2} 

Finally, we consider an extension for part~(c) of Theorem~\ref{introloopMdecomp} to  
certain special cases involving $2$-torsion. 
In general, when $V=P^{2n}(2^{r})\cup e^{4n-1}$ it is unreasonable to expect a decomposition 
$\Omega V\simeq S^{2n-1}\{2^{r}\}\times\Omega S^{4n-1}$ since this implies that the 
space $S^{2n-1}\{2^{r}\}$ is an $H$-space. Often this is not the case, for example, 
if $n=3$ or $n\geq 5$ then $S^{2n-1}\{2\}$ is not an $H$-space~\cite{C2}. A full classification 
of when $S^{2n-1}\{2^{r}\}$ is an $H$-space seems not to appear in the literature. 
However, by~\cite[Corollary 21.6]{C1} it is known that $S^{3}\{2^{r}\}$ is an $H$-space 
if $r\geq 3$ and $S^{7}\{2^{r}\}$ is an $H$-space if~$r\geq 4$. In these cases we show that the 
arguments in Section~\ref{sec:rank1} hold, giving a decomposition of~$\Omega V$. 

Lemma~\ref{tildefimage} and Proposition~\ref{hlgyloopV} were proved for all primes $p$. 
The first point in Section~\ref{sec:rank1} where the restriction $p\geq 3$ occurred was 
in the the existence of the extension $g$ for $f$ in~(\ref{fgextend}). In general, it may 
not be the case that $2^{r}\cdot\pi_{4n-2}(P^{2n}(2^{r}))\cong 0$. However, Sasao~\cite{Sa} 
showed that $2^{r}\cdot\pi_{6}(P^{4}(2^{r}))\cong 0$ if $r\geq 3$ and 
$2^{r}\cdot\pi_{14}(P^{8}(2^{r}))\cong 0$ if $r\geq 4$. Thus in these cases 
we obtain a homotopy cofibration diagram as in~(\ref{fgextend}). The argument 
for Lemma~\ref{oddph} now goes through in exactly the same manner. The 
maps $s$, $t$ and $e$ following Lemma~\ref{oddph} were defined for all primes $p$, 
and the restriction to odd primes in Proposition~\ref{loopVdecomp} was present 
only to: (i) invoke Lemma~\ref{oddph} and (ii) in the $n=2$ case, ensure that the composite 
\(\nameddright{S^{6}}{f}{P^{4}(2^{r})}{q}{S^{4}}\) 
is null homotopic so that there is an extension of $q$ to a map 
\(\namedright{V}{}{S^{4}}\). 
Therefore Proposition~\ref{loopVdecomp} will hold: (i) for $n=4$ and $r\geq 4$, and (ii) for 
$n=2$ and $r\geq 3$ with the extra assumption that there is a map 
\(\namedright{V}{}{S^{4}}\) 
inducing a surjection in mod-$2$ homology. 

\begin{proposition} 
   \label{p=2decomp} 
   Let $V=P^{2n}(2^{r})\cup e^{4n-1}$ be a Poincar\'{e} Duality complex. 
   \begin{letterlist} 
      \item If $n=2$, $r\geq 3$ and there is a map 
               \(\namedright{V}{}{S^{4}}\) 
               inducing a surjection in mod-$2$ homology, then there is a homotopy equivalence  
               $\Omega V\simeq S^{3}\{2^{r}\}\times\Omega S^{7}$; 
      \item if $n=4$ and $r\geq 4$ then there is a homotopy equivalence 
               $\Omega V\simeq S^{7}\{2^{r}\}\times\Omega S^{15}$. 
   \end{letterlist} 
\end{proposition}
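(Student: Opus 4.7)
The approach is to retrace Section~\ref{sec:rank1} at $p=2$, substituting Sasao's exponent bound for Barratt's wherever it appears, and to invoke the hypotheses of parts~(a) and~(b) at the two points where the proof of Proposition~\ref{loopVdecomp} genuinely used that $p$ is odd. The first such point is the extension of the attaching map $f\colon S^{4n-2}\to P^{2n}(2^{r})$ to a map $g\colon P^{4n-1}(2^{r})\to P^{2n}(2^{r})$, giving the cofibration diagram~(\ref{fgextend}); this requires $2^{r}\cdot\pi_{4n-2}(P^{2n}(2^{r}))\cong 0$, which by Sasao~\cite{Sa} holds precisely for $(n,r)=(2,r\geq 3)$ and $(n,r)=(4,r\geq 4)$, matching the hypotheses of the proposition. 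Once~(\ref{fgextend}) is in place, Lemmas~\ref{tildefimage} and~\ref{oddph} and Proposition~\ref{hlgyloopV} apply verbatim at $p=2$: one obtains $\widetilde{h}_{\ast}(\iota_{4n-2})=y^{2}$ when $n\geq 3$ and $y^{2}+t\cdot x^{3}$ when $n=2$, together with $\hlgy{\Omega V;\mathbb{Z}/2\mathbb{Z}}\cong\mathbb{Z}/2\mathbb{Z}[x,y]$ as an algebra. The maps $s$, $t$ and $e$ and the homotopy fibration diagram~(\ref{curlydgrm}) are likewise defined for $p=2$ without modification.

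For part~(b), since $n=4\geq 3$, I would run the $n\geq 3$ branch of the proof of Proposition~\ref{loopVdecomp} at $p=2$: the algebra map $(\Omega h)_{\ast}$ sends the polynomial generator of $\hlgy{\Omega S^{15};\mathbb{Z}/2\mathbb{Z}}$ to $y^{2}$ and hence carries $\mathbb{Z}/2\mathbb{Z}[c]$ isomorphically onto the subalgebra $\mathbb{Z}/2\mathbb{Z}[y^{2}]\subseteq\mathbb{Z}/2\mathbb{Z}[x,y]$; combined with $t_{\ast}$ landing injectively onto $\Lambda(y)\otimes\mathbb{Z}/2\mathbb{Z}[x]$, the composite $e$ induces a mod-$2$ homology isomorphism and is a $2$-local equivalence by Whitehead. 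At odd primes and rationally $S^{7}\{2^{r}\}$ is contractible and $h$ is an equivalence by Remark~\ref{oddphremark}, so $e$ reduces to $\Omega h$ and is again an equivalence. Assembling over all primes and $\mathbb{Q}$ yields an integral homotopy equivalence $\Omega V\simeq S^{7}\{2^{r}\}\times\Omega S^{15}$.

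For part~(a), with $n=2$, the $n=2$ subargument in Proposition~\ref{loopVdecomp} must be replicated instead. Its crucial input was the existence of a map $\overline{q}\colon V\to S^{4}$ extending the pinch map $q\colon P^{4}(p^{r})\to S^{4}$, which at odd primes came for free from the fact that $\pi_{6}(S^{4})\cong\mathbb{Z}/2\mathbb{Z}$ contains no odd torsion; at $p=2$ no such extension is automatic, and this is precisely why the hypothesis of part~(a) posits such a map and demands it be surjective in mod-$2$ homology. Granting this, the remainder of the $n=2$ subargument goes through unchanged: the Hopf invariant one splitting $\Omega S^{4}\simeq S^{3}\times\Omega S^{7}$ (which is integral) and the composite $\pi\circ\Omega\overline{q}\circ\Omega h$ give $\Omega V\simeq F\times\Omega S^{7}$ for some $F$; $t$ lifts to $\overline{t}\colon S^{3}\{2^{r}\}\to F$; and $\overline{t}_{\ast}$ is shown to be a mod-$2$ homology isomorphism by an Euler--Poincar\'{e} series comparison, with the odd-primary and rational cases treated exactly as in part~(b).

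The main obstacle is part~(a): the whole argument hinges on the hypothesized map $V\to S^{4}$, and without it there is no reason a priori to expect $\Omega V$ to split in the asserted way. More conceptually, the restriction to the numerical pairs $(n,r)\in\{(2,r\geq 3),(4,r\geq 4)\}$ is forced by the necessary condition that $S^{2n-1}\{2^{r}\}$ be an $H$-space in order to appear as a retract of $\Omega V$, and these are exactly the pairs for which~\cite[Corollary~21.6]{C1} supplies the required $H$-space structure, so the same input of Cohen that permits the statement of the proposition also legitimises the earlier algebraic steps once Sasao's exponent bound has been invoked.
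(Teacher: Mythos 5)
Your proposal is correct and follows essentially the same route as the paper: retrace Section~\ref{sec:rank1} at $p=2$, substitute Sasao's exponent bound $2^{r}\cdot\pi_{6}(P^{4}(2^{r}))\cong 0$ for $r\geq 3$ and $2^{r}\cdot\pi_{14}(P^{8}(2^{r}))\cong 0$ for $r\geq 4$ in place of Barratt's odd-primary bound to obtain the diagram~(\ref{fgextend}), observe that Lemma~\ref{tildefimage}, Proposition~\ref{hlgyloopV}, and the argument of Lemma~\ref{oddph} carry over to $p=2$, and in part~(a) use the hypothesized map $V\to S^{4}$ in place of the automatic extension of the pinch map. One small correction to your final remark: Cohen's $H$-space result for $S^{3}\{2^{r}\}$ and $S^{7}\{2^{r}\}$ is not an input to any step of the argument — the numerical restrictions come solely from Sasao's exponent bound — and as the paper observes in the remark following the proposition, the decomposition is actually an independent reproof that these spaces are $H$-spaces, rather than a consumer of that fact.
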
 
\vspace{-1cm}~$\qqed$\bigskip 

For example, if $\tau(S^{2n})$ is the unit tangent bundle of $S^{2n}$ then, as a $CW$-complex, 
$\tau(S^{2n})=P^{2n}(2)\cup e^{4n-1}$, and there is a fibration 
\(\nameddright{S^{2n-1}}{}{\tau(S^{2n})}{}{S^{2n}}\). 
For $r\geq 2$, define the ``mod-$2^{r}$ tangent bundle" by the homotopy pullback 
\[\diagram 
       S^{2n-1}\rto\ddouble & \tau_{r}(S^{2n})\rto\dto & S^{2n}\dto^{\underline{2}^{r-1}} \\ 
       S^{2n-1}\rto & \tau(S^{2n})\rto & S^{2n} 
  \enddiagram\] 
where $\underline{2}^{r-1}$ is the map of degree~$2^{r-1}$. As a $CW$-complex, 
$\tau_{r}(S^{2n})=P^{2n}(2^{r})\cup e^{4n-1}$ and $\cohlgy{\tau_{r}(S^{2n})}$ satisfies 
Poincar\'{e} Duality. Proposition~\ref{p=2decomp} implies that there are homotopy equivalences 
$\Omega\tau_{r}(S^{4})\simeq S^{3}\{2^{r}\}\times\Omega S^{7}$ if $r\geq 3$ and 
$\Omega\tau_{r}(S^{8})\simeq S^{7}\{2^{r}\}\times\Omega S^{15}$ if $r\geq 4$. 

\begin{remark} 
The argument for Proposition~\ref{p=2decomp} is independent of 
prior knowledge that $S^{3}\{2^{r}\}$ for $r\geq 3$ or $S^{7}\{2^{r}\}$ for $r\geq 4$ 
are $H$-spaces. So the loop space decompositions of the mod-$2^{r}$ tangent 
bundles is a new proof of this property, since the retractions of $S^{3}\{2^{r}\}$ for $r\geq 3$ 
and $S^{7}\{2^{r}\}$ for $r\geq 4$ off loop spaces imply that they are $H$-spaces. 
The previous argument in~\cite{C1} examined the $H$-deviation of the degree~$2^{r}$ map. 
\end{remark} 

Generalizing to the case $V=P^{2n}(2m)\cup e^{4n-1}$ where $m$ is divisible by 
more than one prime seems to be much more difficult. Our argument breaks down with 
the loss of Lemma~\ref{pkchoice}. It would be interesting to know if a different argument can 
be used to make progress. 
\bigskip 

\noindent 
\textsc{Conflict of Interest Statement}: On behalf of all authors, the corresponding author states that 
there is no conflict of interest.

\bibliographystyle{amsalpha}

\end{document}